\newcommand{\R}{\mathbb{R}}
\newcommand{\s}{\mathbb{S}}
\newcommand{\conbod}{\mathcal{K}^n}
\newcommand{\Mel}[2]{\mathcal{M}_{#1}(#2)}
\newcommand{\vol}{\text{\rm Vol}}
\newtheorem{conj}{Conjecture}[section]
\newtheorem{theorem}[conj]{Theorem}
\newtheorem{lemma}[conj]{Lemma}
\newtheorem{proposition}[conj]{Proposition}
\newtheorem{corollary}[conj]{Corollary}
\newtheorem{definition}[conj]{Definition}
\DeclareMathOperator{\supp}{supp}
\let\oldtocsection=\tocsection
\let\oldtocsubsection=\tocsubsection
\let\oldtocsubsubsection=\tocsubsubsection
\renewcommand{\tocsection}[2]{\hspace{0em}\oldtocsection{#1}{#2}}
\renewcommand{\tocsubsection}[2]{\hspace{1em}\oldtocsubsection{#1}{#2}}
\renewcommand{\tocsubsubsection}[2]{\hspace{2em}\oldtocsubsubsection{#1}{#2}}
\begin{document}
\title[Weighted Berwald's Inequality]{Weighted Berwald's Inequality}                                 
    \author[Langharst]{Dylan Langharst}                                
    \address{Dylan Langharst \\ Institut de Math\'ematiques de Jussieu, Sorbonne Universit\'e\\
Paris, 75252 France}                                                                
    \email{dylan.langharst@imj-prg.fr}   

      \author[Putterman]{Eli Putterman}                                
    \address{Eli Putterman \\ School of Mathematical Sciences, 
Tel Aviv University \\ Tel Aviv 66978, Israel}                                   \email{putterman@mail.tau.ac.il} 
    
    \thanks{The first named author was supported in part by the U.S. National Science Foundation Grant DMS-2000304 and the United States - Israel Binational Science Foundation (BSF) Grant 2018115. This work was completed while the first named author was a postdoctoral researcher funded by a Fondation Sciences Math\'ematiques de Paris fellowship. Both authors were supported during Fall 2022 by the National Science Foundation under Grant DMS-1929284 while in residence at the Institute for Computational and Experimental Research in Mathematics in Providence, RI, during ``Harmonic Analysis and Convexity" program and during Spring 2024 by the Hausdorff Research Institute for Mathematics in Bonn, Germany, while in residence for the Dual Trimester Program: "Synergies between modern probability, geometric analysis and stochastic geometry".}
    \keywords{Berwald's Inequality, Projection Bodies, Radial Mean Bodies, Zhang's Inequality, Petty Projection Inequality.}                                  
    \subjclass{52A39, 52A41; 28A75}                                  
    \begin{abstract}
The inequality of Berwald is a reverse-H{\"o}lder like inequality for the $p$th average, $p\in (-1,\infty),$ of a non-negative, concave function over a convex body in $\R^n.$ We prove Berwald's inequality for averages of functions with respect to measures that have some concavity conditions, e.g. $s$-concave measures, $s\in \mathbb{R}.$ We also obtain equality conditions; in particular, this provides a new proof for the equality conditions of the classical inequality of Berwald. As applications, we generalize a number of classical bounds for the measure of the intersection of a convex body with a half-space and also the concept of radial means bodies and the projection body of a convex body.
\end{abstract}          
    \maketitle  
    \tableofcontents

\newpage
\section{Introduction}
Let $\R^n$ be the standard $n$-dimensional real vector space with the Euclidean structure. We write $\vol_m(C)$ for the $m$-dimensional Lebesgue measure (volume) of a measurable set $C \subset \R^n$, where $m = 1, . . . , n$ is the dimension
of the minimal affine space containing $C$. The volume of the unit ball $B_2^n$ is written as $\kappa_n,$ and its boundary, the unit sphere, will be denoted as usual $\s^{n-1}.$ A set $K\subset \R^n$ is said to be \textit{convex} if for every $x,y\in K$ and $\lambda\in[0,1],$ $(1-\lambda)x+\lambda y\in K.$ We say $K$ is a convex body if it is a convex, compact set with non-empty interior; the set of all convex bodies in $\R^n$ will be denoted by $\conbod$. The set of those convex bodies containing the origin will be denoted $\conbod_0.$ A convex body $K$ is centrally symmetric, or just symmetric, if $K=-K$. There exists an addition on the set of convex bodies: the Minkowski sum of $K$ and $L$, and one has that $K+L=\{ a + b: a\in K, b\in L\}.$ 

We recall that a non-negative function $f$ is said to be \textit{concave} on $\R^n$ if for every $x,y\in\R^n$ and $\lambda\in [0,1]$ one has
$$f((1-\lambda)x+\lambda y) \geq (1-\lambda)f(x) + \lambda f(y),$$ and that the \textit{support} of a function is precisely $\text{supp}(f)=\overline{\{x\in\R^n:f(x) >0\}}.$ One can see that a non-negative, concave function will be supported on a convex set. It is easy to show if a non-negative, concave function takes the value infinity anywhere on its support, then the function is identically infinity on the interior of its support from convexity; therefore, throughout this paper, given a non-negative, concave function $f,$ we shall assume it is not identically infinity, and so $f$ will have a finite maximum value, denoted $\|f\|_\infty$. If $K$ is the support of a non-negative, concave function $f$, then $K_t=\{x\in\R^n:f(x) \geq t\}=\{f\geq t\}$ are the \textit{level sets} of $f$. Notice that the level sets are also convex. Additionally, if $\|f\|_\infty=f(0),$ then $0\in K_t$ for all $t \leq \|f\|_\infty.$ If $f$ is even, then $K$ is symmetric and so too is each $K_t.$ In any case, if $K$ is also bounded, then each $K,K_t\in\conbod$ (for each $t\leq \|f\|_\infty$).

We next recall that the classical Berwald inequality states that if $f$ is a non-negative, concave function supported on some convex set $K\subset \R^n,$ then, the function given by

\begin{equation}\label{classical_Berwald} t_f(p)=\left({\binom{n+p}{p}}\frac{1}{\vol_n(K)}\int_Kf^p(x)dx\right)^{1/p} \end{equation}
is decreasing for $p\in (-1,\infty)$ \cite{Berlem} with equality \cite{GZ98} if and only if the graph of $f$ is a certain cone with $K$ as its base. Here, the combinatorial coefficients are given by $\binom{m}{p}=\frac{\Gamma(m+1)}{\Gamma(p+1)\Gamma(m-p+1)},$ with $\Gamma(z)$ the standard Gamma function, defined for $z \in \mathbb{C}$ except for when $z$ is negative integer. Usually written in the form $t_f(q)\leq t_f(p)$ for $-1 < p \leq q < \infty,$ Berwald's inequality has several applications in the fields of convex geometry and probability theory, see for example \cite{GZ98,MP89,BM12,GP99}. The first goal of this paper is to establish generalizations of Berwald's inequality for measures with density under certain concavity assumptions.  We will also analyze equality conditions; in particular, we obtain equality conditions for the classical Berwald inequality by a method independent of other proofs (e.g., \cite{AAGJV,GZ98,Bor73}). To accomplish these tasks, we first
prove a generalized Berwald's inequality, Lemma~\ref{l:mel_ber}.

We will say a Borel measure $\mu$ has \textit{density} if it has a locally integrable Radon-Nikodym derivative from $\R^n$ to $\R^{+}$, i.e,
\[
\frac{d\mu(x)}{dx} = \phi(x), \text{ with } \phi \colon \R^n \to \R^+,\phi\in L^1_{\text{loc}}(\R^n). 
\]
A Borel measure $\mu$ on $\R^n$ is said to be $F$-concave on a class $\mathcal{C}$ of compact subsets of $\R^n$ if there exists a continuous, (strictly) monotonic, invertible function $F:(0,\mu(\R^n))\to (-\infty,\infty)$ such that, for every pair $A,B \in \mathcal{C}$  and every $t \in [0,1]$, one has
  $$\mu(t A +(1-t)B) \geq F^{-1}\left(tF(\mu(A)) +(1-t)F(\mu(B))\right).$$
 When $F(x)=x^s, s > 0$ this can be written as
	$$\mu(t A +(1-t)B)^{s}\geq t\mu(A)^s +(1-t)\mu(B)^{s},$$
 and we say $\mu$ is $s$-concave. When $s=1$, we merely say the measure is concave.	In the limit as $s\rightarrow 0$, we obtain the case of log-concavity, which can also be obtained by taking $F(x) = \log x$:
	$$\mu(t A +(1-t)B)\geq\mu(A)^{t}\mu(B)^{1-t}.$$
	 The classical Brunn-Minkowski inequality  (see for example \cite{gardner_book}) asserts the  $1/n$-concavity of the Lebesgue measure on the class of all compact subsets of $\R^n$. From Borell's classification on concave measures \cite{Bor75}, a Radon measure (locally finite and regular Borel measure) is log-concave on Borel subsets of $\R^n$ if, and only if,  $\mu$ has a density $\phi(x)$ that is log-concave, i.e. $\phi(x)=Ae^{-\psi(x)},$ where $A>0$ and $\psi:\R^n\to\R^+$ is convex. Similarly, a Radon measure is $s$-concave on Borel subsets of $\R^n$, $s\in (-\infty,0)\cup(0,1/n),$ if, and only if, $\mu$ has a density $\phi(x)$ that is $p$-concave (if $s>0$) or $p$-convex (if $s<0$), where $p=s/(1-ns).$ However, all we will require is that a measure is $s$-concave on a class of convex sets; we will discuss an important example below. Thus, our results in the case of $s$-concave measures include measures beyond Borell's classification.
  
We can now state our first main result, which is the Berwald inequality for $F$-concave measures under different restrictions on the function $F$. This result applies to a variety of measures, including $s$-concave ones.

\begin{theorem}[The Berwald Inequality for measures with concavity]
\label{t:ber}
Let $f$ be a non-negative, concave function supported on $K\subset \R^n$. Let $\mu$ be a Borel measure such that $0< \mu(K) < \infty$ and $\mu$ satisfies one of the below listed concavity assumptions on a collection of convex subsets of $K$ containing the level sets of $f$. Then, for any $-1<p\leq q < p_{\max}$ we have
$$C(p,\mu,K)\left(\frac{1}{\mu(K)}\int_K f(x)^p d\mu(x)\right)^{1/p}\geq C(q,\mu,K)\left(\frac{1}{\mu(K)}\int_K f(x)^q d\mu(x)\right)^{1/q},$$
where
\begin{enumerate}
    \item If $\mu$ is $F$-concave, where $F:[0,\mu(K)]\to [0,\infty)$ is a continuous, increasing and invertible function: $C(p,\mu,K)
    =$\begin{align*}\begin{cases}
    \left(\frac{p}{\mu(K)}\int_{0}^{1} F^{-1}\left[F(\mu(K))(1-t)\right]t^{p-1}dt\right)^{-\frac{1}{p}} & \text {for } p>0 \\
    \left(\frac{p}{\mu(K)}\int_{0}^{1}t^{p-1} (F^{-1}\left[F(\mu(K))(1-t)\right]-\mu(K))dt +1\right)^{-\frac{1}{p}} & \text {for } p\in (-1,0).
    \end{cases}
    \end{align*}
    There is equality if, and only if, $F(0)=0,$ for all $t\in [0,\|f\|_\infty]$ the following formula holds
$$\mu(\{f\geq t\})=F^{-1}\left[F(\mu(K))\left(1-\frac{t}{\|f\|_{\infty}}\right)\right],$$ and for all $p\in (-1,\infty),$ $\|f\|_\infty$ must satisfy
$$\|f\|_\infty=C(p,\mu,K)\left(\frac{1}{\mu(K)}\int_K f(x)^p d\mu(x)\right)^{1/p}.$$

    \item If $\mu$ is $Q$-concave, where $Q:(0,\mu(K)]\to (-\infty,\infty)$ is a continuous, increasing and invertible function: $C(p,\mu,K)=$
    $$
    \begin{cases}
    \left(\frac{p}{\mu(K)}\int_0^\infty Q^{-1}\left[Q(\mu(K))-t\right]t^{p-1} dt\right)^{-\frac{1}{p}} & \text {for } p>0 \\
    \left(\frac{p}{\mu(K)}\int_{0}^{\infty}t^{p-1} (Q^{-1}\left[Q(\mu(K)-t)\right]-\mu(K))dt\right)^{-\frac{1}{p}} & \text {for } p\in (-1,0).
    \end{cases}
    $$
    Equality is never obtained.
    
    \item If $\mu$ is $R$-concave, where $R:(0,\mu(K)]\to (0,\infty)$ is a continuous, decreasing and invertible function: $C(p,\mu,K)=$
    $$
    \begin{cases}
    \left(\frac{p}{\mu(K)}\int_0^\infty R^{-1}\left[R(\mu(K))(1+t)\right]t^{p-1} dt\right)^{-\frac{1}{p}} & \text {for } p>0 \\
    \left(\frac{p}{\mu(K)}\int_{0}^{\infty}t^{p-1} (R^{-1}\left[R(\mu(K))(1+t)\right]-\mu(K))dt\right)^{-\frac{1}{p}} & \text {for } p\in (-1,0).
    \end{cases}
    $$
    Equality is never obtained.
\end{enumerate}
In all cases, $p_{\max}$ is defined implicitly via $p_{\max}=\sup\{p>0: T_f(p)<\infty\},$
where
$$T_f(p)=C(p,\mu,K)\left(\frac{1}{\mu(K)}\int_K f(x)^p d\mu(x)\right)^{1/p}.$$ $T_f(0)$ is defined via continuity.
\end{theorem}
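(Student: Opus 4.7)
The plan is to reduce the theorem to a one-dimensional inequality about the distribution function $g(t) := \mu(\{f \geq t\})$ on $[0, \|f\|_\infty]$, and then to invoke the generalized one-variable Berwald inequality of Lemma~\ref{l:mel_ber}. By the layer cake representation, for $p > 0$ one has $\int_K f^p\,d\mu = p\int_0^{\|f\|_\infty} t^{p-1} g(t)\,dt$, with an analogous formula (plus a boundary term involving $\mu(K)\|f\|_\infty^p$) for $p \in (-1,0)$; thus $T_f(p)$ depends on $f$ only through the decreasing function $g$ together with the numbers $\|f\|_\infty$ and $\mu(K) = g(0)$.

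The crucial geometric input is the interplay between the concavity of $f$ and that of $\mu$. Since $f$ is concave, its superlevel sets satisfy $K_{(1-\lambda)s + \lambda t} \supseteq (1-\lambda) K_s + \lambda K_t$; combining set-monotonicity of $\mu$ with the hypothesized concavity on convex subsets of $K$ yields, in case (1), that $F \circ g$ is concave on $[0, \|f\|_\infty]$, and analogously that $Q \circ g$ is concave in case (2) and $R \circ g$ is convex in case (3) (since $R$ is decreasing, the inequality flips). In each case one identifies the extremal profile $g_*$ for which this concavity is tight---for instance, in case (1), $g_*(t) = F^{-1}[F(\mu(K))(1 - t/\|f\|_\infty)]$---and the constants $C(p,\mu,K)$ are normalized precisely so that substituting $g_*$ into the layer cake formula yields $T_f(p) = \|f\|_\infty$ at every admissible $p$. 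The problem is thereby reduced to comparing moments of $g$ against those of $g_*$, which Lemma~\ref{l:mel_ber} should handle uniformly across the three regimes; this delivers the claimed monotonicity of $T_f(p)$.

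For equality in case (1), each inequality invoked above must saturate: $F \circ g$ must be affine (so $g \equiv g_*$ on $[0, \|f\|_\infty]$), the affine profile must reach $0$ at $t = \|f\|_\infty$ (forcing $F(0) = 0$), and the moment comparison must also be tight, so $T_f(p) = \|f\|_\infty$ for every $p$. In cases (2) and (3) the putative extremal profiles would require $g$ to remain strictly positive on all of $[0, \infty)$, which is incompatible with $g$ vanishing at the finite value $\|f\|_\infty$; hence equality is ruled out. The main obstacle I foresee is Lemma~\ref{l:mel_ber} itself: establishing in a unified way that the normalized $p$-moment functional of a decreasing function is monotone in $p$ and constant along extremals. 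The standard route is to observe that $g - g_*$ changes sign at most once on $[0, \infty)$---a consequence of the one-sided concavity/convexity established above together with the matching at $t = 0$---and then to apply a classical crossing-moment argument against the kernel $t^{p-1}$, separately handling the boundary corrections that appear for $p \in (-1, 0)$.
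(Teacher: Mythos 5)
Your proposal takes essentially the same route as the paper's proof: reduce to the distribution function $g(t)=\mu(\{f\geq t\})$ via the layer-cake/Mellin formulas (including the boundary term for $p\in(-1,0)$), transfer the measure's concavity to $F\circ g$ (resp.\ $Q\circ g$ concave, $R\circ g$ convex), normalize against the extremal profile whose Mellin transform produces the constants $C(p,\mu,K)$, and apply Lemma~\ref{l:mel_ber} to get monotonicity of $T_f(p)$, with the same equality analysis (the profile matching forces $F(0)=0$ and $T_f(p)\equiv\|f\|_\infty$, while in cases (2) and (3) the extremal profiles are strictly positive on $[0,\infty)$, contradicting that $g$ vanishes beyond $\|f\|_\infty$). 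Even your sketched single sign-change/crossing-moment argument is precisely how the paper proves Lemma~\ref{l:mel_ber}, so nothing essentially different is being proposed.
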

  \noindent We remark that cases 2 and 3 of Theorem~\ref{t:ber} have a strict inequality due to the fact, for Case 2, that $Q^{-1}[Q(\mu(K))-t]$ being integrable implies $Q^{-1}(-\infty)=0,$ or $Q(0)=-\infty.$ On the other hand, we will show that if there is equality, then $|Q(0)|$ would be finite. Similar logic holds for Case 3. However, the inequality is asymptotically sharp as $f$ is made arbitrarily large on its support. 
  
  We obtain the following corollary for $s$-concave measures; the case where $s<0$ was previously done by Fradelizi, Gu{\'e}don and Pajor \cite{FGP14}, by modifying Borell's proof \cite{Bor73_2} of the classical inequality of Berwald. Presented in \cite{FLM20} is a proof for all $s\in \R,$ based on techniques from a work by Koldobsky, Pajor and Yaskin \cite{KPY08}. Both extensions do not mention equality conditions.

\begin{corollary}[The Berwald Inequality for $s$-concave measures]
\label{cor:ber_s}
Let $f$ be a non-negative concave function supported on $K\subset\R^n.$ Let $\mu$ be a Borel measure finite on $K$ that is $s$-concave, $s\in \R$, on a collection of convex subsets of $K$ containing the level sets of $f$. Then, for any $-1<p\leq q<\infty$ we have
$$\left(\frac{C(p,s)}{\mu(K)}\int_K f(x)^p d\mu(x)\right)^{1/p}\geq \left(\frac{C(q,s)}{\mu(K)}\int_K f(x)^q d\mu(x)\right)^{1/q},$$
where $$C(p,s)= \begin{cases}{{\frac{1}{s}+p} \choose p} & \text{for } s>0,
\\
\Gamma(p+1)^{-1} & \text{if } s=0,
\\
s\left(p+\frac{1}{s}\right){{-\frac{1}{s}}\choose p} & \text{for } s<0.
\end{cases}$$

\noindent For $s<0,$ we must restrict to $p\in (-1,-1/s)$ for integrability. 

 If $s>0,$ there is equality if, and only if, for all $t\in [0,\|f\|_\infty]$ and $p\in (-1,\infty):$
$$\mu(\{x\in K: f(x) \geq t\})=\mu(K)\left(1-\frac{t}{\|f\|_\infty}\right)^{1/s}$$ implying $$\|f\|^p_\infty={{\frac{1}{s}+p} \choose p}\frac{1}{\mu(K)}\int_K f(x)^p d\mu(x).$$
If $s=0$ or $s <0,$ equality is never obtained.
\end{corollary}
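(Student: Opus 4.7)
The corollary follows from Theorem~\ref{t:ber} by specializing to the three natural concavity functions dictated by the sign of $s$, and then collapsing the resulting one-dimensional integral to a Beta (or Gamma) function.

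For $s>0$, the measure is $F$-concave with $F(x) = x^s$, continuous, strictly increasing, and invertible from $[0,\mu(K)]$ onto $[0,\mu(K)^s]$; note $F(0)=0$, so the equality clause of case (1) is available. Substituting $F^{-1}[F(\mu(K))(1-t)] = \mu(K)(1-t)^{1/s}$ into $C(p,\mu,K)$ for $p>0$ gives $p\,B(p,\,1/s+1)$, which by $\Gamma(1/s+1) = (1/s)\Gamma(1/s)$ is exactly $\binom{1/s+p}{p}^{-1}$. For $p\in(-1,0)$, I would carry out an integration by parts with $u = (1-t)^{1/s}-1$ and $dv = t^{p-1}\,dt$; the boundary term at $t=1$ contributes $-1/p$ which, after multiplication by $p$, cancels the ``$+1$'' appearing in the formula of case (1), and the remaining integral is the same Beta integral, yielding the same constant.

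For $s=0$, I would apply case (2) with $Q(x) = \log x$, so $Q^{-1}[Q(\mu(K))-t] = \mu(K)e^{-t}$, and both subcases reduce (with the analogous integration by parts when $p<0$) to $p\int_0^\infty e^{-t}t^{p-1}\,dt = \Gamma(p+1)$. For $s<0$, case (3) applies with $R(x) = x^s$ (now decreasing) and $R^{-1}[R(\mu(K))(1+t)] = \mu(K)(1+t)^{1/s}$; the substitution $u = 1/(1+t)$ converts the integral over $(0,\infty)$ into a Beta integral on $(0,1)$, which converges precisely when $p < -1/s$ (explaining the integrability restriction). Writing the answer as $\Gamma(p+1)\Gamma(-1/s - p)/\Gamma(-1/s)$ and then using $\Gamma(-1/s+1) = -(1/s)\Gamma(-1/s)$ together with the definition of $\binom{-1/s}{p}$ brings it into the stated form $s(p+1/s)\binom{-1/s}{p}$.

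Finally, the equality conditions are inherited directly from Theorem~\ref{t:ber}: for $s>0$, since $F(0)=0$, the level-set characterization of case (1) applies and yields the claimed formula for $\mu(\{f\geq t\})$, as well as the closed expression for $\|f\|_\infty^p$; for $s\leq 0$, cases (2) and (3) assert that equality is never attained, which is exactly the last sentence of the corollary. The only potentially delicate step is checking that the boundary terms in the integrations by parts vanish for $p\in(-1,0)$ --- this follows because $p>-1$ forces the product to go to zero at $t=0$, while the decay of $(1-t)^{1/s}-1$ (respectively $e^{-t}-1$ or $(1+t)^{1/s}-1$) combined with $t^p\to 0$ at the opposite endpoint handles the other side. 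None of these are substantial obstacles; the proof is essentially bookkeeping given Theorem~\ref{t:ber}.
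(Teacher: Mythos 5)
Your proposal is correct and follows essentially the same route as the paper: specialize Theorem~\ref{t:ber} with $F(x)=x^s$, $Q(x)=\log x$, and $R(x)=x^s$ according to the sign of $s$, reduce the constants to Beta/Gamma integrals (your substitution $u=1/(1+t)$ is the paper's $t=z/(1-z)$ in disguise, and gives the same restriction $p<-1/s$), and inherit the equality statements from the theorem. The only cosmetic difference is that for $p\in(-1,0)$ you verify the constants by explicit integration by parts, whereas the paper simply invokes analytic continuation of the Beta function; both are fine.
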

  
The equality conditions to Corollary~\ref{cor:ber_s} may seem a bit strange; we are able to obtain an exact formula for the function $f$ when the measure $\mu$ is $s$-concave and $1/s$-homogeneous, $s\in (0,1/n]$. Recall that a Borel measure $\mu$ is said to be $\alpha$-homogeneous for some $\alpha >0$ if $\mu(tA)=t^{\alpha}\mu(A)$ for all compact sets $A \subset \supp \mu$ and $t>0$ so that $tA \subset \supp \mu.$ If $\mu$ has density $\phi$, then one can check using the Lebesgue differentiation theorem that this implies that $\phi$ is $(\alpha-n)-$homogeneous. 

We say a set $L$ with $0 \in \text{int}(L)$ is star-shaped if every line passing through the origin crosses the boundary of $L$ exactly twice. We say $L$ is a star body 
if it is a compact, star-shaped set whose radial function $\rho_L:\R^n\setminus \{0\} \to \R,$ given by $\rho_L(y)=\sup\{\lambda:\lambda y\in L\},$ is continuous. For $K\in\conbod_0,$ the \textit{Minkowski functional} of $K$ is defined to be $\|y\|_K=\rho^{-1}_K(y)=\inf\{r>0:y\in rK\}.$ The Minkowski functional $\|\cdot\|_K$ of $K\in\conbod_0$ is a norm on $\R^n$ if $K$ is symmetric. If $x\in\R^n$ and $L \subset \mathbb R^n$ satisfy that $L-x$ is a star body, then the generalized radial function of $L$ at $x$ is defined by $\rho_L(x,y):=\rho_{L-x}(-y)$. Note that for every $K\in\conbod,$ $K-x$ is a star body for every $x\in\text{int}(K)$.

One gets the following formula for $\mu(K)$ when $\mu$ is $\alpha$-homogeneous, $\alpha>0$, and $K$ is a star body in $\R^n$. 
\begin{equation}
\begin{split}
    \mu(K)&=\int_{\s^{n-1}}\int_0^{\rho_K(\theta)}\phi(r\theta)r^{n-1}drd\theta
    \\
    &=\int_{\s^{n-1}}\phi(\theta)\int_{0}^{\rho_K(\theta)}r^{\alpha-1}drd\theta=\frac{1}{\alpha}\int_{\s^{n-1}}\phi(\theta)\rho_K^\alpha(\theta)d\theta.
    \end{split}
    \label{eq:star_form}
\end{equation}
Crucial to the statement of equality conditions, and our investigations henceforth, will be the \textit{roof function} associated to a star body $K$, which we define as $\ell_K(0)=1,\ell_K(x)=0$ for $x\neq K$ and, for $x\in K\setminus\{0\},$ $\ell_K(x)=\left(1-\frac{1}{\rho_K(x)}\right).$ In polar coordinates, $\ell_K(r\theta)$ becomes an affine function in $r$ for $r\in [0,\rho_K(\theta)]$:
\begin{equation}
\label{eq:lk}
\ell_K(r\theta)=\left(1-\frac{r}{\rho_K(\theta)}\right).
\end{equation}
Note that if $K\in\conbod_0,$ then we can also write
$\ell_K(x)=1-\|x\|_K$ for $x\in K$ and $0$ otherwise.  Observe that, for a non-negative, concave function supported on some $K\in\conbod_0$ one obtains for $\theta\in\s^{n-1}$ and $r\in [0,\rho_{K}(\theta)]$ that
\begin{equation}
\begin{split}f(r\theta)&=f\left(\left(\frac{r}{\rho_{K}(\theta)}\rho_{K}(\theta)+0\left(1-\frac{r}{\rho_{K}(\theta)}\right)\right)\theta\right)
\\
&\geq \frac{r}{\rho_{K}(\theta)}f(\rho_{K}(\theta)\theta) + f(0)\ell_{K}(r\theta) \geq f(0)\ell_{K}(r\theta);
\end{split}
\label{eq:supporting_line_bound}
\end{equation}
we will make liberal use of this bound throughout this work. Functions of the form $f(x)=M\ell_{K-x_0}(x-x_0)$ for some $M>0$ and $x_0\in K$ will also be referred to roof functions, with height $M$ and vertex $x_0$. The reason for this vocabulary will become clearer below.

Using \eqref{eq:star_form}, one can verify by hand that the function $\ell_K(x)$ satisfies, for $\mu$ an $s$-concave, $1/s$-homogeneous measure, that
$$\int_K\ell_K(x)^pd\mu(x)={{\frac{1}{s}+p}\choose \frac{1}{s}}^{-1}\mu(K).$$
Therefore, $\ell_K(x)$ yields equality in the Berwald inequality for $s$-concave measures, Corollary~\ref{cor:ber_s}, under the additional assumption that $\mu$ is $1/s$-homogeneous. The next theorem shows this is the only such function.

\begin{theorem}(The Berwald Inequality for $s$-concave, $1/s$-homogeneous measures)
\label{t:sberh}
Let $f$ be a non-negative, concave function supported on $K\subset \R^n$. Let $\mu$ be a Radon measure containing $K$ in its support that is $s$-concave, $1/s$-homogeneous for some $s\in (0,1/n]$. Then, for any $-1<p\leq q<\infty$ we have
$$\left({{\frac{1}{s}+p} \choose p}\frac{1}{\mu(K)}\int_K f(x)^p d\mu(x)\right)^{1/p}\geq \left({{\frac{1}{s}+q} \choose q}\frac{1}{\mu(K)}\int_K f(x)^q d\mu(x)\right)^{1/q}.$$
Suppose $\|f\|_\infty=f(0)$. Then,  there is equality if, and only if, $f(r\theta)$ is an affine function in $r.$ i.e. one has $f(x)=\|f\|_{\infty}\ell_{K}(x).$
\end{theorem}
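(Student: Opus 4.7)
The plan is as follows. The inequality itself is immediate from Corollary~\ref{cor:ber_s}, since a $1/s$-homogeneous, $s$-concave measure is in particular $s$-concave on convex sets and the coefficient there is exactly $C(p,s) = \binom{1/s+p}{p}$ for $s > 0$. For the converse direction of the equality claim (that $f = \|f\|_\infty \ell_K$ achieves equality), I would simply invoke the identity
$$\int_K \ell_K(x)^p\, d\mu(x) = {\tbinom{1/s+p}{1/s}}^{-1} \mu(K)$$
already noted in the paper: substituting $f = \|f\|_\infty \ell_K$ makes $T_f(p)$ constantly equal to $\|f\|_\infty$, so equality holds throughout.

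For the forward direction, I would assume equality and set $K_t = \{f \geq t\}$. Corollary~\ref{cor:ber_s} forces
$$\mu(K_t) = \mu(K)\left(1 - \frac{t}{\|f\|_\infty}\right)^{1/s}, \qquad t \in [0, \|f\|_\infty].$$
The hypothesis $\|f\|_\infty = f(0)$ places the origin in every $K_t$, and applying concavity of $f$ along the segment from $0$ to $\rho_K(\theta)\theta$, exactly as in \eqref{eq:supporting_line_bound}, yields $f \geq \|f\|_\infty \ell_K$, and hence the inclusion $(1 - t/\|f\|_\infty)K \subset K_t$. The $1/s$-homogeneity of $\mu$ gives
$$\mu\bigl((1 - t/\|f\|_\infty) K\bigr) = \left(1 - \frac{t}{\|f\|_\infty}\right)^{1/s} \mu(K) = \mu(K_t).$$

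So $K_t$ and $(1-t/\|f\|_\infty) K$ are nested closed convex sets with equal $\mu$-measure, and I would now argue they must coincide using $K \subset \supp(\mu)$. If the containment were strict for some $t < \|f\|_\infty$, a point $x \in K_t \setminus (1-t/\|f\|_\infty)K$, together with an interior point of $(1-t/\|f\|_\infty) K$, would by convexity produce a small open ball contained in $K_t$ but disjoint from the closed set $(1-t/\|f\|_\infty) K$; since this ball lies in $K \subset \supp(\mu)$ it has positive $\mu$-mass, contradicting the above. Therefore $K_t = (1-t/\|f\|_\infty) K$ for every admissible $t$, which is exactly the statement $f(r\theta) = \|f\|_\infty(1 - r/\rho_K(\theta)) = \|f\|_\infty \ell_K(r\theta)$.

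The main subtlety is that last step: converting equality of measures of nested convex bodies into set equality. The $1/s$-homogeneity of $\mu$ is what allows the level-set measure identity from Corollary~\ref{cor:ber_s} to be matched against a genuine dilate of $K$, and the support hypothesis is what promotes equality of measures to equality of sets. Neither ingredient is available in the setting of Corollary~\ref{cor:ber_s} alone, which is why that earlier equality characterization only pins down the measures of the level sets rather than $f$ itself.
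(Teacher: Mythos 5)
Your proposal is correct, and the inequality, the sufficiency direction, and the starting point of the necessity direction (the equality conditions of Corollary~\ref{cor:ber_s} plus the pointwise bound \eqref{eq:supporting_line_bound}) coincide with the paper's. Where you genuinely diverge is in how the necessity is finished. The paper works at the single exponent $p=1/s$: writing $\mu(K)$ in polar coordinates via \eqref{eq:star_form} and using the $(1/s-n)$-homogeneity of the density, it converts the equality-case value of $\int_K f^{1/s}\,d\mu$ into $\int_K \left[\|f\|_\infty \ell_K\right]^{1/s}d\mu$, so that the pointwise inequality $f\geq \|f\|_\infty\ell_K$ together with equal integrals forces the integrands to agree. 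You instead work with the whole family of level sets: the corollary pins down $\mu(\{f\geq t\})$, homogeneity identifies this value with $\mu\bigl((1-t/\|f\|_\infty)K\bigr)$, the bound \eqref{eq:supporting_line_bound} gives the inclusion $(1-t/\|f\|_\infty)K\subseteq\{f\geq t\}$, and the hypothesis $K\subseteq\supp\mu$ upgrades equality of measures of these nested convex sets to equality of the sets themselves (your cone-over-a-ball argument is sound). Your route is more geometric and makes explicit where the support hypothesis is used, and it avoids the polar-coordinate computation; the paper's route is shorter and sidesteps two small points that your argument should acknowledge: that $K$ is full-dimensional (implicit in $0<\mu(K)<\infty$ for a measure with density, needed for the interior point of the dilate), and that the dilates $(1-t/\|f\|_\infty)K$ lie in $\supp\mu$ so that homogeneity applies -- this follows since the support of an $s$-concave measure, $s>0$, is convex and contains $0\in K$. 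With those remarks added, your proof is a complete and valid alternative, yielding the same conclusion that $\{f\geq t\}=(1-t/\|f\|_\infty)K$ for all $t$, i.e.\ $f=\|f\|_\infty\ell_K$.
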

\noindent In our applications below, we will always be considering functions whose maximum is obtained at the origin, and so the minor constraint on the equality conditions does not hinder us. We now prove the classical Berwald inequality with equality conditions. Favard first conjectured the inequality in one dimension, and Berwald verified the inequality for all dimensions \cite{Berlem}, without equality conditions. In fact, when $n=1,$ Berwald was able to show the inequality is true for $-1<p\leq q <\infty,$ and this was extended to all dimensions by Borell \cite{Bor73}. However, the generality of his technique makes it difficult to establish where equality occurs.

Gardner and Zhang \cite{GZ98} gave a different proof, which yields that equality is satisfied in the classical Berwald inequality precisely when the graph of $f$ is a certain cone with $K$ as a base, i.e. that $f$ is a roof function. In Corollary~\ref{cor:classic}, we obtain a proof using Theorem~\ref{t:sberh}, verifying that our techniques reduce to the known result. We must also mention that this result was also obtained in \cite[Theorem 7.2]{AAGJV} via a different technique. In that work, the roof function was defined via its graph in $\R^{n+1}.$ Specifically they constructed the roof function in the following way: given a convex set $K\subset \R^n$ (which will become the base of a hypercone), let $M>0$ be the height of the hypercone, and let $x_0\in K$ be the location of the projection of vertex of the hypercone. Then, the roof function with height $M$ and vertex $x_0$ is equivalently defined as the non-negative, concave function $f$ whose graph is given by
$$\{(x, t) \in K \times \R: 0 \leq t \leq f(x)\}=\text{conv}\left(K \times\{0\},\left\{\left(x_0, M\right)\right\}\right),$$
where $\text{conv}$ denotes the convex hull. From this formulation, we obtain an interesting formula for the level sets of a roof function $f:$ for $0\leq t \leq M,$ one has that $K_t=\frac{t}{M}x_0 +(1-\frac{t}{M})K.$

\begin{corollary}[The Classical Berwald Inequality]
\label{cor:classic}
Let $f$ be a non-negative, concave function supported on $K\in\conbod$. Then, for any $-1<p\leq q < \infty$ we have
$$\left({{n+p} \choose p}\frac{1}{\vol_n(K)}\int_K f(x)^p dx\right)^{1/p}\geq \left({{n+q} \choose q}\frac{1}{\vol_n(K)}\int_K f(x)^q dx\right)^{1/q}.$$
There is equality if, and only if, $f(r\theta)$ is an affine function in $r$ up to translation i.e. if $x_0$ is the point in $K$ where the maximum of $f$ is obtained, one has $f(x)=\|f\|_{\infty}\ell_{K-x_0}(x-x_0).$
\end{corollary}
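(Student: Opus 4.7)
The plan is to specialize Theorem~\ref{t:sberh} to the Lebesgue measure $\mu = \vol_n$. By the Brunn-Minkowski inequality, $\vol_n$ is $1/n$-concave on the class of convex subsets of $\R^n$, and by scaling it is $n$-homogeneous. So, taking $s = 1/n$, one has $1/s = n$ and $\binom{1/s + p}{p} = \binom{n+p}{p}$, and likewise for $q$; the inequality of Theorem~\ref{t:sberh} then becomes the inequality claimed by the corollary, provided we can arrange the hypothesis $\|f\|_\infty = f(0)$. This is the only real work, and it is carried out by translation.

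Since $f$ is assumed non-negative and concave on all of $\R^n$ with support $K$, continuity from outside $K$ forces $f \equiv 0$ on $\partial K$, so the point $x_0 \in K$ at which $\|f\|_\infty$ is attained lies in $\mathrm{int}(K)$. Define $\tilde f(x) = f(x + x_0)$; this is a non-negative concave function supported on $\tilde K := K - x_0$, which is a convex body containing the origin in its interior, and $\|\tilde f\|_\infty = \tilde f(0) = \|f\|_\infty$. By translation invariance of Lebesgue measure,
$$\int_K f^p(x)\,dx = \int_{\tilde K} \tilde f^p(x)\,dx, \qquad \vol_n(K) = \vol_n(\tilde K),$$
so the desired inequality for $(f,K)$ is precisely the inequality of Theorem~\ref{t:sberh} applied to $(\tilde f, \tilde K)$ with $s = 1/n$.

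For the equality condition, one invokes the equality case of Theorem~\ref{t:sberh}, which requires $\tilde f(x) = \|\tilde f\|_\infty \ell_{\tilde K}(x)$ for all $x \in \tilde K$. Undoing the translation yields $f(x) = \|f\|_\infty \ell_{K - x_0}(x - x_0)$, exactly the form of a roof function with height $\|f\|_\infty$ and vertex $x_0$. No substantive obstacle arises; the main subtlety is verifying that $x_0$ lies in $\mathrm{int}(K)$ so that $\tilde K \in \conbod_0$ and the roof function $\ell_{\tilde K}$ is well defined, and this follows from concavity of $f$ on all of $\R^n$ together with non-negativity.
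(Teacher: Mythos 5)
Your proposal is correct and follows essentially the same route as the paper: apply Theorem~\ref{t:sberh} with $\mu=\vol_n$ (which is $1/n$-concave and $n$-homogeneous, so $s=1/n$), and reduce to the case $\|f\|_\infty=f(0)$ by translating by the maximizer $x_0$, using translation invariance of the Lebesgue integral for both the inequality and the equality characterization, exactly as in the paper's proof. The one quibble is your aside that $x_0\in\mathrm{int}(K)$ because ``$f\equiv 0$ on $\partial K$'': under the intended reading ($f$ concave on $K$, zero outside) the maximum may well be attained on $\partial K$ (e.g.\ $f(x)=x$ on $[0,1]$), but this step is not actually needed, since the inequality and equality statement of Theorem~\ref{t:sberh} and the definition of $\ell_{\widetilde K}$ via the radial function do not require the origin to be an interior point.
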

\begin{proof}
The inequality follows immediately from Theorem~\ref{t:sberh}, as do the equality conditions if the maximum of $f$ is obtained at the origin. If the maximum of $f$ is not obtained at the origin, let $x_0$ be the point in $K$ where $f$ obtains its maximum. Let $g(x)=f(x+x_0)$ and $\widetilde{K}=K-x_0.$ Then, $g(x)$ is a concave function supported on $\widetilde{K}$ with maximum at the origin, and, for every $p\in (-1,0)\cup (0,\infty)$ 
$$\frac{1}{\vol_n(K)}\int_K f(x)^p dx=\frac{1}{\vol_n(\widetilde{K})}\int_{\widetilde{K}} g(x)^p dx.$$ 

\noindent Therefore, since there is equality in the inequality for the function $f$ and the convex body $K$ by hypothesis, there is equality in the inequality for the function $g$ and the convex body $\widetilde{K}$. Consequently, we have
$$g(x)=\|g\|_{\infty}\ell_{\widetilde{K}}(x).$$
Using that $f(x)=g(x-x_0)$ and $\|g\|_{\infty}=\|f\|_{\infty}$ yields the result.
\end{proof}

We next list two applications for the standard Gaussian measure on $\R^n,$ which we recall is given by $d\gamma_n(x)=\frac{1}{(2\pi)^{n/2}}e^{-|x|^2/2}dx.$ From Borell's classification, we see that the Gaussian measure is log-concave on $\R^n$ over any collection of compact sets closed under Minkowski summation. Thus, we can apply the second case of Corollary~\ref{cor:ber_s} and obtain a Berwald-type inequality for the Gaussian measure in this case. However, the Ehrhard inequality shows one can improve on the log-concavity of the Gaussian measure: 
For $0<t<1$ and Borel sets $K$ and $L$ in $\R^{n}$, we have
\begin{equation}\label{e:Ehrhard_ineq}
\Phi^{-1}\left(\gamma_{n}((1-t) K+tL)\right) \geq(1-t) \Phi^{-1}\left(\gamma_{n}(K)\right)+t \Phi^{-1}\left(\gamma_{n}(L)\right),
\end{equation}
i.e. $\Phi^{-1}\circ\gamma_n$ is concave, where $\Phi(x)=\gamma_{1}((-\infty, x))$. The inequality \eqref{e:Ehrhard_ineq} was first proven by Ehrhard for the case of two closed, convex sets \cite{EHR1,EHR2}.  Lata\l a \cite{Lat96} generalized Ehrhard's result to the case of an arbitrary Borel set $K$ and convex set $L$; the general case for two Borel sets of the Ehrhard's inequality was proven by Borell \cite{Bor03}. Since $\Phi$ is log-concave, the log-concavity of the Gaussian measure is strictly weaker than the Ehrhard inequality. Additionally, Kolesnikov and Livshyts showed that the Gaussian measure is $\frac{1}{2n}$ concave on $\conbod_0,$ the set of convex bodies containing the origin in their interior \cite{KL21}. That is, by restricting the admissible sets in the concavity equation, the concavity can improve.
\begin{corollary}[Berwald-type inequalities for the Gaussian Measure]
\label{cor:ber_gauss}
Let $f$ be a non-negative, concave function supported on $K\subset \R^n$. Then, we have the following:
\begin{enumerate}
    \item The function $$g_1(p)=\frac{1}{\Gamma(p+1)^{1/p}}\left(\frac{1}{\gamma_n(K)}\int_K f(x)^p d\gamma_n(x)\right)^{1/p}$$
    is strictly decreasing on $(-1,\infty);$
    \item The function $$g_2(p)=C(p,K)\left(\frac{1}{\gamma_n(K)}\int_K f(x)^p d\gamma_n(x)\right)^{1/p}$$
    is strictly decreasing on $(-1,\infty),$ where $C(p,K)=$
    $$
    \begin{cases}
    \left(\frac{p}{\gamma_n(K)}\int_0^\infty \Phi\left[\Phi^{-1}(\gamma_n(K))-t\right]t^{p-1} dt\right)^{-\frac{1}{p}} & \text {for } p>0 \\
    \left(\frac{p}{\gamma_n(K)}\int_{0}^{\infty}t^{p-1} (\Phi\left[\Phi^{-1}(\gamma_n(K)-t)\right]-\gamma_n(K))dt\right)^{-\frac{1}{p}} & \text {for } p\in (-1,0);
    \end{cases}
    $$
    \item and, if the maximum of $f$ is at the origin and $K\in\conbod_0,$ then the function $$g_3(p)=\left({{2n+p} \choose p}\frac{1}{\gamma_n(K)}\int_K f(x)^p d\gamma_n(x)\right)^{1/p}$$
    is decreasing on $(-1,\infty).$
\end{enumerate}
\end{corollary}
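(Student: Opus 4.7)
The three parts are direct applications of the results already established, specialized to the three known concavity properties of the Gaussian measure. In each case the main task is to identify the correct $F$ (or $s$) and verify that the assumed concavity is available on a family of convex sets containing the level sets of $f$; once this is done, the constants match by inspection and the monotonicity (and its strictness) is inherited from the general theorem.

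For part (1), by Borell's classification the Gaussian measure is log-concave on all Borel subsets of $\R^n$, hence in particular on every family of convex sets. Apply Corollary~\ref{cor:ber_s} with $s=0$ and $\mu=\gamma_n$: the constant $C(p,0)=\Gamma(p+1)^{-1}$ is exactly the constant appearing in $g_1$. Since equality is never attained in the $s\leq 0$ case of Corollary~\ref{cor:ber_s}, the function $g_1$ is strictly decreasing on $(-1,\infty)$.

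For part (2), Ehrhard's inequality \eqref{e:Ehrhard_ineq} states that $\Phi^{-1}\circ\gamma_n$ is concave on all pairs of Borel sets, so $\gamma_n$ is $Q$-concave with $Q=\Phi^{-1}:(0,1]\to(-\infty,\infty)$, which is continuous, increasing and invertible (with $Q^{-1}=\Phi$). This fits the hypothesis of case~(2) of Theorem~\ref{t:ber}, and plugging $Q^{-1}=\Phi$ into the formula for the constant gives exactly the expression defining $C(p,K)$ in $g_2$. The strict decrease of $g_2$ follows from the fact that, in case~(2) of Theorem~\ref{t:ber}, equality is never attained.

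For part (3), by the Kolesnikov--Livshyts theorem $\gamma_n$ is $\tfrac{1}{2n}$-concave on $\conbod_0$. We must check that the level sets $K_t=\{f\geq t\}$, for $t\in[0,\|f\|_\infty)$, lie in $\conbod_0$. They are convex (as super-level sets of a concave function), compact (as closed subsets of the bounded set $K$), and because $f(0)=\|f\|_\infty$ and $f$ is continuous on the interior of its support, each $K_t$ with $t<\|f\|_\infty$ contains a neighborhood of the origin; hence $K_t\in\conbod_0$. Therefore Corollary~\ref{cor:ber_s} applies with $s=1/(2n)$ and $\mu=\gamma_n$, yielding $C(p,1/(2n))=\binom{2n+p}{p}$ and the claimed monotonicity of $g_3$ on $(-1,\infty)$.

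There is no real obstacle here; the only point that requires a moment's care is the verification in part (3) that the level sets of $f$ belong to $\conbod_0$, since Kolesnikov--Livshyts concavity is valid only on this restricted class. This is the reason for the two hypotheses $\|f\|_\infty=f(0)$ and $K\in\conbod_0$ in the statement of~(3).
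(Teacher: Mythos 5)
Your proof is correct and follows essentially the same route as the paper's: part (1) via the $s=0$ case of Corollary~\ref{cor:ber_s} using Borell's log-concavity, part (2) via case (2) of Theorem~\ref{t:ber} with $Q=\Phi^{-1}$ from Ehrhard's inequality, and part (3) via the Kolesnikov--Livshyts $\tfrac{1}{2n}$-concavity on $\conbod_0$ together with the first case of Corollary~\ref{cor:ber_s}. Your explicit verification that the level sets $\{f\geq t\}$, $t<\|f\|_\infty$, lie in $\conbod_0$ is a welcome elaboration of a point the paper merely asserts, and your derivation of strictness from the ``equality is never obtained'' clauses is consistent with the paper's statement.
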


The equality condition for the third case of Corollary~\ref{cor:ber_gauss} can be deduced from Theorem~\ref{t:ber}, so we do not explicitly state it. If one further restricts the admissible sets, one can do even better. The Gardner-Zvavitch inequality states for symmetric $K,L\in\conbod_0$ and $t\in[0,1]$ that
\begin{equation}\label{e:gamma_gaussian}
    \gamma_n\left((1-t) K + t L\right)^{1/n}\geq (1-t)\gamma_n(K)^{1/n} + t \gamma_n(L)^{1/n},
\end{equation}
i.e. $\gamma_n$ is $1/n$-concave over the class of symmetric convex bodies. This inequality was first conjectured in \cite{GZ10} by Gardner and Zvavitch; a counterexample was shown in \cite{PT13} when $K$ and $L$ are not symmetric. Important progress was made in \cite{KL21}, which lead to the proof of the inequality \eqref{e:gamma_gaussian} by Eskenazis and Moschidis in \cite{EM21} for symmetric convex bodies. Recently, Cordero-Erausquin and Rotem \cite{CER23} extended this result to the class
\begin{equation}
\begin{split}
\mathcal{M}_n=\bigg\{&\text{Borel measures }\mu \text{ on }\R^n: d\mu(x)=e^{-w(|x|)}dx, w:[0,\infty)\to(-\infty,\infty]
\\
&\text{ is an increasing function such that } t\to w(e^t)\text{ is convex}\bigg\}.
\end{split}
\label{eq:measures_dario}
\end{equation}
That is, every measure $\mu\in\mathcal{M}_n$ is $1/n$-concave over the class of symmetric convex bodies. To show how rich this class is, $\mathcal{M}_n$ includes not only every rotationally invariant, log-concave measure (e.g. Gaussian), but also Cauchy-type measures. Combining these results, we obtain a Berwald-type inequality.
\begin{corollary}[Berwald-type inequality for rotationally invariant log-concave measures]
\label{cor:ber_rot_invar}
    Let $f$ be a non-negative, concave, even function supported on a symmetric $K\in \conbod_0.$ Let $\mu$ be a measure in $\mathcal{M}_n$ containing $K$ in its support. Then, for any $-1<p\leq q <\infty:$
    $$\left({{n+p} \choose p}\frac{1}{\mu(K)}\int_K f(x)^p d\mu(x)\right)^{1/p}\geq \left({{n+q} \choose q}\frac{1}{\mu(K)}\int_K f(x)^q d\mu(x)\right)^{1/q}.$$
\end{corollary}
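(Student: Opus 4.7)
The plan is to deduce this corollary directly from Corollary~\ref{cor:ber_s} applied with $s = 1/n$, using the Cordero-Erausquin--Rotem result \cite{CER23} as the key input. That result asserts that every measure $\mu \in \mathcal{M}_n$ is $1/n$-concave on the class of symmetric convex bodies, so the main task is simply to verify that the level sets of $f$ lie in this admissible class.

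First, I would observe that, since $f$ is even and concave, for any $x \in K$ one has $f(0) \geq \tfrac{1}{2}f(x) + \tfrac{1}{2}f(-x) = f(x)$, so $\|f\|_\infty = f(0)$. Consequently, for every $t \in [0, \|f\|_\infty]$, the level set $K_t = \{f \geq t\}$ is convex (by concavity of $f$), symmetric (by evenness of $f$), and contains the origin; hence it is a symmetric convex subset of $K$. (The possibly degenerate top level set $K_{\|f\|_\infty}$ causes no difficulty and can be excluded by approximation in $t$.)

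Combining these two observations, the collection of symmetric convex subsets of $K$ provides a class satisfying the hypotheses of Corollary~\ref{cor:ber_s} with $s = 1/n$: it contains every level set of $f$, and $\mu$ is $1/n$-concave on it by \cite{CER23}. Substituting $s = 1/n$ into the constant supplied by Corollary~\ref{cor:ber_s} yields
$$C(p,1/n) = \binom{n+p}{p},$$
which is exactly the constant appearing in the statement. The only conceptual step is the compatibility check between the class of admissible sets produced by the Cordero-Erausquin--Rotem theorem and the class required by Corollary~\ref{cor:ber_s}; once this is confirmed, the result is immediate. I do not anticipate any substantive obstacle, since no equality discussion is needed for this corollary.
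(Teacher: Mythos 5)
Your proposal is correct and matches the paper's own argument: the paper likewise notes that evenness and concavity force the maximum of $f$ to be at the origin, that the level sets are symmetric convex bodies, and then invokes the $1/n$-concavity of measures in $\mathcal{M}_n$ (the Cordero-Erausquin--Rotem result) through Corollary~\ref{cor:ber_s} with $s=1/n$, giving the constant $\binom{n+p}{p}$. No gaps to report.
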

We emphasize that the $(1/2n)$-concavity of the Gaussian measure on $\conbod_0$ shown in \cite{KL21} and the $1/n$-concavity of $\gamma_n$ and other measures from $\mathcal{M}_n$ over the class of symmetric convex bodies falls strictly outside the classification of $s$-concave measures by Borell. This paper is organized as follows. In Section~\ref{sec:ber}, we prove a version of Berwald's inequality for $F$-concave measures. In Section~\ref{sec:og_radial_bodies}, we discuss surface area measure, projection bodies, and radial mean bodies. Then, we apply our results to weighted generalizations of radial mean bodies. Along the way, we obtain more inequalities of Rogers and Shephard and of Zhang type. We would like to mention here that weighted extensions of concepts from the Brunn-Minkowski theory is a very rich field. This includes works on the surface area measure \cite{Ball93, Naz03,GAL15,GAL13,GAL21} and general measure extensions of the projection body of a convex body \cite{GAL19, LRZ22}. Recently, it has been shown that these developments, in particular the concavities for the Gaussian measure and Borell's classification, have led to a burgeoning \textit{weighted Brunn-Minkowski theory}, see \cite{KL23,FLMZ23_1,FLMZ23_2}.

\noindent \textbf{Acknowledgments} We would like to thank Artem Zvavitch for the helpful feedback throughout this work, and we also thank Matthieu Fradelizi for the discussion concerning Theorem~\ref{t:ber} when $p\in (-1,0)$. We would also like to thank Michael Roysdon for the discussions concerning this work, in particular the suggestion of Corollary~\ref{cor:perturb}. This work began during a visit to the Laboratoire d'Analyse et de Math{\'e}matiques Appliqu{\'e}es at Universit{\'e} Gustave Eiffel, France, from October to December 2021 and was continued during a visit to Tel Aviv University, Israel, in March and April 2022 - heartfelt thanks are extended, respectively, to Matthieu Fradelizi and Semyon Alesker.

\section{Generalizations of Berwald's Inequality}
\label{sec:ber}
In this section, we establish a generalization of Berwald's inequality. In what follows, for a finite Borel measure $\mu$ and a Borel set $K$ with positive $\mu$-measure, $\mu_K$ will denote the normalized probability on $K$ with respect to $\mu$, that is for measurable $A\subset\R^n:$ $\mu_K(A)=\frac{\mu(K\cap A)}{\mu(K)}.$ Notice that for every non-negative, measurable function $f$ on $K$ and $p>0$ such that $f\in L^p(\mu,K)$, one has the layer cake formula
$$\frac{1}{\mu(K)}\int_{K}f^{p}(x)d\mu(x)=p\int_0^\infty \mu_K(\left\{f\geq t\right\}) t^{p-1}dt$$
from the following use of Fubini's theorem:
\begin{align*}
    \frac{1}{\mu(K)}\int_{K}f^{p}(x)d\mu(x)&=\frac{p}{\mu(K)}\int_{K}\int_0^{f(x)}t^{p-1}dtd\mu(x)
    \\
    &=\frac{p}{\mu(K)}\int_{0}^\infty \mu(\left\{x\in K:f(x)\geq t\right\})t^{p-1}dt.
\end{align*}
Additionally, if $\mu$ is $F$-concave, with $F$ increasing and invertible, on a class $\mathcal{C}$ of convex sets, then for $K\in\mathcal{C}$ in the support of a concave function $f$, one has that the function given by $f_\mu(t)=\mu_K(\left\{f \geq t\right\})$ is $\tilde{F}$-concave, where $\tilde{F}(x)=F(\mu(K)x),$ as long as the level sets of $f$ belong to $\mathcal{C}.$ Indeed, since $f$ is concave, one has, for $\lambda\in[0,1]$ and $u,v\geq 0$, that
$$\{f\geq (1-\lambda)u+\lambda v\}\supset (1-\lambda)\{f\geq u\}+\lambda\{f\geq v\}.$$
Using the $F$-concavity of $\mu,$ this yields
$$F\left(\mu\left(\{f\geq(1-\lambda)u+\lambda v\}\right)\right) \geq (1-\lambda) F\left(\mu(\{f\geq u\})\right) + \lambda F\left(\mu(\{f\geq v\})\right).$$
Inserting the definition of $\tilde{F}$ and $f_\mu,$ this is precisely
$$\tilde{F}\circ f_{\mu}\left((1-\lambda)u + \lambda v\right) \geq (1-\lambda)\tilde{F}\circ f_{\mu}(u) +\lambda \tilde{F}\circ f_{\mu}(v).$$

\noindent Similarly one can check that if $\mu$ is $R$-concave, with $R$ decreasing and invertible, on a class $\mathcal{C}$ of convex sets, then for $K\in\mathcal{C}$ in the support of a concave function $f$, one then has that the function $f_\mu$ is $\tilde{R}$-convex, where $\tilde{R}(x)=R(\mu(K)x).$ That is, $\tilde{R}\circ f_{\mu}$ is a convex function on its support, as long as the level sets of $f$ belong to $\mathcal{C}.$

 We next need the appropriate layer cake formula for when $p<0.$ Notice that for every non-negative, measurable function $f$ on a Borel set $K$ and $p<0$ such that $f\in L^p(\mu,K)$ for a Borel measure $\mu$, one has the layer cake formula
$$\frac{1}{\mu(K)}\int_{K}f^{p}(x)d\mu(x)=p\int_0^\infty t^{p-1}(\mu_K(\left\{f\geq t\right\}) -1)dt$$
from the following use of Fubini's theorem:
\begin{align*}
    \frac{1}{\mu(K)}\int_{K}f^{p}(x)d\mu(x)&=-\frac{p}{\mu(K)}\int_{K}\int_{f(x)}^\infty t^{p-1}dtd\mu(x)
    \\
    &=\frac{p}{\mu(K)}\int_{0}^\infty t^{p-1} (\mu(\left\{x\in K:f(x)\geq t\right\})-\mu(K))dt.
\end{align*}

We now recall the analytic extension of the Gamma function. We start with the definition of $\Gamma(z)$ when the real part of $z$ is greater than zero: $$\Gamma(z)=\int_0^\infty t^{z-1}e^{-t}dt.$$ If the real part of $z$ is less than zero, then one uses analytic continuation to extend $\Gamma$ via the multiplicative property $\Gamma(z+1)=z\Gamma(z)$. Now, let us obtain the formula for $\Gamma(z)$ when the real part of $z$ is in $(-1,0)$. From the multiplicative property one can write
\begin{equation}\Gamma(z)=\frac{1}{z}\int_0^\infty t^{z}e^{-t}dt=\int_0^\infty t^{z-1}(e^{-t}-1)dt,\label{eq:gamma_neg}\end{equation}
where, for the second equality, integration by parts was performed and $e^{-t}$ was viewed as the derivative of $1-e^{-t},$ to maintain integrability. The fact that the layer cake formula looks similar to the formula for $\Gamma(z)$ when the real part of $z$ is between $-1$ and $0$ inspires the analytic continuation of Theorem~\ref{t:ber} to negative $p$. We will use the Mellin transformation, which was extended to $p\in (-1,0)$ in \cite{FLM20} for $s$-concave functions. We further generalize the Mellin transform here. 

The Mellin transform of a function $\psi$ such that $\text{supp}(\psi)\subseteq [0,B)$ is the analytic function for $p\in (-1,0)\cup (0,\infty)$ given by $\Mel{\psi}{p}=$
\begin{equation}
    \begin{cases}
    \int_0^B t^{p-1}(\psi(t)-\psi(0))dt +\frac{B^p}{p}\psi(0) & \text{for } p\in (-1,0),
    \\
    \int_0^B t^{p-1}\psi(t)dt & \text{for } p>0 \text{ such that } t^{p-1}\psi(t)\in L^1(\R).
    \end{cases}
    \label{eq:Mel}
\end{equation}
Following \cite{FLM20}, consider the function
\begin{equation}
    \psi_s(t)= \begin{cases}(1-t)^{1 / s} \chi_{[0,1]}(t) & \text {for } s>0, \\ 
    e^{-t} \chi_{(0,\infty)}(t) & \text {for } s=0, \\ 
    (1+t)^{1 / s} \chi_{(0,\infty)}(t) & \text {for } s<0.\end{cases}
\end{equation}
Then, for all $p>-1,$ one has $\Mel{\psi_s}{p}^{-1}=pC(p,s),$ where $C(p,s)$ is the constant defined in Corollary~\ref{cor:ber_s}, that is Berwald's inequality for $s$-concave measures; notice again that in the case when $s<0$, for $t^{p-1}(1+t)^{1/s}$ to be integrable, we must have that $p < -1/s.$

Motivated by this example, we need to define a function whose Mellin transform is related to the constant $C(p,\mu,K)$ from Theorem~\ref{t:ber}, and this definition will depend on the concavity of $\mu$. Recall that a function $\psi$ is $f$-concave for a monotonic function $f$ if $f\circ \psi$ is either concave (if $f$ is increasing) or convex (if $f$ is decreasing). Similarly, $\psi$ is $f$-affine if $f\circ \psi$ is an affine function. We will have three different restrictions on the function $f$, matching those in Theorem~\ref{t:ber} (and the notation as well). First, fix some $A>0$. Then, we will consider the case when $f\in\{F,Q,R\},$ where $F$ represents those functions $F:[0,A]\to [0,\infty)$ that are continuous, increasing and invertible; $Q$ represents those functions $Q:(0,A]\to (-\infty,\infty)$ that continuous, increasing and invertible; and $R$ represents those functions $R:(0,A]\to (0,\infty)$ that are continuous, decreasing and invertible. We next define
\begin{equation}
    \psi_{f,A}(t)= \begin{cases}F^{-1}(F(A)(1-t)) \chi_{[0,1]}(t) & \text {if } f=F, \\ 
    Q^{-1}(Q(A)-t)\chi_{(0,\infty)}(t) & \text {if } f=Q, \\ 
    R^{-1}(R(A)(1+t)) \chi_{(0,\infty)}(t) & \text {if} f=R.\end{cases}
    \label{F_affine}
\end{equation}
Notice that, if $A=\mu(K),$ then $\Mel{\psi_{f,\mu(K)}}{p}^{-1}=\frac{p}{\mu(K)}C(p,\mu,K)^p$ if $p\in (-1,0)$, and this also holds for any $p>0$ such that $t^{p-1}\psi_{f,\mu(K)}$ is integrable. 

We will now work towards the proof of Theorem~\ref{t:ber}. Let $\psi$ be a non-negative function such that $\psi(0)=A>0.$ Then, for $p\in (-1,0)\cup (0,p_1),$ set
\begin{equation}
    \Omega_{f,\psi}(p)=\frac{\Mel{\psi}{p}}{\Mel{\psi_{f,A}}{p}},
    \label{Omega_level}
\end{equation}
where $\Omega_{f,\psi}(0)=1$ and $p_1$ is defined implicitly by $p_1=\sup\{p<\infty:\Omega_{f,\psi}(p)<\infty\}.$ Next, set for $p\in (-1,0)\cup(0,p_1)$
\begin{equation}
    G_{\psi}(p)=\left(\Omega_{f,\psi}(p)\right)^{1/p}
    \label{eq:milman_psi}
\end{equation}
and $G_{\psi}(0)=\exp(\log(\Omega_{f,\psi})^\prime(0)).$
\begin{lemma}[The Mellin-Berwald Inequality]
\label{l:mel_ber}
Let $\psi:[0,\infty)\to[0.\infty)$ be an integrable, $f$-concave function, $f\in \{F,Q,R\}$ (elaborated above \eqref{F_affine}). Suppose that $\psi$ is right differentiable at the origin. Next, set $p_0=\inf\{p>-1:\Omega_{f,\psi}(p)>0\},$ where $ \Omega_{f,\psi}(p)$ is defined via \eqref{Omega_level}. Then,
\begin{enumerate}
    \item $p_0 \in [-1,0)$ and if $\psi$ is non-increasing then $p_0=-1.$
    \item $\Omega_{f,\psi}(p)>0$ for every $p\in (p_0,p_1).$ Thus, $G_{\psi}(p),$ defined via \eqref{eq:milman_psi}, is well-defined and analytic on $(p_0,p_1).$
    \item $G_{\psi}(p)$ is non-increasing on $(p_0,p_1).$
    \item If there exists $r,q\in (p_0,p_1)$ such that $G_{\psi}(r)=G_{\psi}(q),$ then $G_{\psi}(p)$ is constant on $(p_0,p_1).$ Furthermore, $G_{\psi}(p)$ is constant on $(p_0,p_1)$ if, and only if, $\psi(t)=\psi_{f,A}(\frac{t}{\alpha})$ for some $\alpha >0,$ in which case $G_{\psi}(p)=\alpha.$
\end{enumerate}
\end{lemma}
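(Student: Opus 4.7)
The plan is to prove the monotonicity claim (3) by a matching argument, comparing $\psi$ to the scaled model $\psi_\alpha(t) := \psi_{f,A}(t/\alpha)$ for a carefully chosen $\alpha$. First I would observe the change-of-variable identity $\Mel{\psi_\alpha}{p} = \alpha^p \Mel{\psi_{f,A}}{p}$ for every admissible $p$, so that $G_{\psi_\alpha} \equiv \alpha$ is constant; moreover, since the $\psi(0)$ correction in the definition of $\Mel{\cdot}{p}$ for $p \in (-1,0)$ cancels in the difference, one has the uniform identity
\[
\Mel{\psi}{p} - \Mel{\psi_\alpha}{p} = \int_0^\infty t^{p-1}\bigl(\psi(t) - \psi_\alpha(t)\bigr)\,dt.
\]
Fixing $q \in (p_0, p_1)$ and setting $\alpha := G_\psi(q)$ then enforces the matching condition $\Mel{\psi}{q} = \Mel{\psi_\alpha}{q}$.

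The heart of the argument is the sign analysis of $\psi - \psi_\alpha$: I will show that under matching, this difference has at most one sign change on $[0,\infty)$, from $+$ to $-$. In case (1), letting $S := \sup\,\supp(\psi)$ (which is finite when $F(0)=0$ by integrability), I would first establish $\alpha \geq S$: on $[0,S]$ the concave $F\psi$ lies above its linear chord $F\psi_S$, and Mellin-transforming this pointwise inequality at $q$ gives $\alpha^q\Mel{\psi_{f,A}}{q} = \Mel{\psi}{q} \geq S^q\Mel{\psi_{f,A}}{q}$. Then $g := F\psi - F\psi_\alpha$ is concave on $[0,S]$ with $g(0)=0$ and $g(S)\leq 0$, so it is either non-positive throughout (in which case matching forces $\psi = \psi_\alpha$ a.e.) or crosses zero exactly once at some $T_1 \in (0, S]$, going from positive to negative; combined with $\psi - \psi_\alpha \leq 0$ on $(S, \alpha]$ and $\equiv 0$ on $[\alpha, \infty)$, the overall pattern is $+$ on $[0, T_1]$ and $\leq 0$ on $[T_1,\infty)$. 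Cases (2)--(3) are analogous: the global concavity of $Q\psi$ (respectively convexity of $R\psi$) on $[0,\infty)$ forces $Q\psi - Q\psi_\alpha$ (respectively $R\psi - R\psi_\alpha$) to have at most one zero beyond $t = 0$, yielding the same $+,-$ structure even when $\psi$ has unbounded support.

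With the sign pattern in hand, I would apply the Mellin-pivot identity
\[
\Mel{\psi}{p} - \Mel{\psi_\alpha}{p} = \int_0^\infty t^{q-1}\bigl(t^{p-q} - T_1^{p-q}\bigr)\bigl(\psi(t) - \psi_\alpha(t)\bigr)\,dt,
\]
which is valid because $\Mel{\psi}{q} - \Mel{\psi_\alpha}{q} = 0$. For $p > q$, the factor $t^{p-q} - T_1^{p-q}$ is negative on $(0, T_1)$ and positive on $(T_1, \infty)$, so the integrand is pointwise non-positive and hence $\Mel{\psi}{p} \leq \Mel{\psi_\alpha}{p}$. Tracking the sign of $\Mel{\psi_{f,A}}{p}$ (positive for $p > 0$, negative for $p < 0$) translates this into $G_\psi(p) \leq G_\psi(q)$; the reverse inequality for $p < q$ comes from the same identity with the opposite sign. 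The equality claim (4) is immediate: the Mellin-pivot integrand vanishes a.e.\ exactly when $\psi = \psi_\alpha$ a.e., so any equality $G_\psi(r) = G_\psi(q)$ with $r \neq q$ forces $\psi = \psi_\alpha$ and hence $G_\psi \equiv \alpha$ throughout. Parts (1)--(2) follow from continuity of $\Omega_{f,\psi}$ (via dominated convergence, using right-differentiability of $\psi$ at $0$) together with the convention $\Omega_{f,\psi}(0) = 1$: these yield $p_0 < 0$, integrability of $t^{p-1}$ near $t=0$ gives $p_0 \geq -1$, and for $\psi$ non-increasing both Mellin transforms are strictly negative throughout $(-1, 0)$, forcing $p_0 = -1$; positivity of $\Omega$ on the whole interval $(p_0, p_1)$ then follows from the monotonicity of $G_\psi$ already established.

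The main obstacle is the sign analysis: a priori, $\psi - \psi_\alpha$ could exhibit a $+,-,+$ pattern---positive on $[0, T_1]$, negative on $[T_1, \alpha]$, and positive once again on $[\alpha, \infty)$---which would defeat the single-pivot Mellin trick. The auxiliary estimate $\alpha \geq S$ in case (1), and the global concavity/convexity on all of $[0,\infty)$ in cases (2)--(3), are precisely the structural facts that rule out this pathology.
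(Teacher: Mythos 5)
Your proposal is correct and takes essentially the same route as the paper's proof: you compare $\psi$ with the matched profile $\psi_{f,A}(t/\alpha)$, $\alpha=G_\psi(q)$, show that the difference changes sign exactly once by playing the concavity (resp.\ convexity) of $f\circ\psi$ against the affine $f\circ\psi_{f,A}(\cdot/\alpha)$, and convert the single sign change into monotonicity of the Mellin ratio --- your pivot factor $t^{p-q}-T_1^{p-q}$ is equivalent to the paper's device of integrating the tail $\int_t^\infty u^{q-1}(\psi-z)\,du$ and integrating by parts --- with the same bootstrap for positivity of $\Omega_{f,\psi}$ and the same rigidity argument for equality. One intermediate claim in your case (1) is false but harmless: the assertion that $F\psi-F\psi_\alpha$ is $\le 0$ at $t=S$ can fail (e.g.\ $\psi\equiv A$ on $[0,S]$ gives a strictly positive value there), yet it is also unnecessary, since concavity of this difference together with its vanishing at $t=0$ and your estimate $\alpha\ge S$ already force the claimed $+,-$ pattern, with the crossing allowed to occur at $t=S$.
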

\begin{proof}
    From the fact that $\Omega_{f,\psi}(0)=\psi(0)=1>0,$ one immediately has that $p_0\in [-1,0).$ Notice that $\Mel{\psi_{f,A}}{p} <0$ for $p\in (-1,0).$ If $\psi$ is non-increasing, then from \eqref{eq:Mel} one obtains that $\Mel{\psi}{p}<0$ as well. Thus, $\Omega_{f,\psi}(p)=\Mel{\psi}{p}/\Mel{\psi_{f,A}}{p}>0$ for all $p\in (-1,0),$ and thus $p_0=-1.$
    
    For the second statement, clearly $\Omega_{f,\psi}(p)>0$ for $p\in [0,p_1].$ So, fix some $q\in (p_0,0)$ such that $\Omega_{f,\psi}(q)>0.$ Then, $G_{\psi}(q)=\left(\Omega_{f,\psi}(q)\right)^{1/q}>0.$ Define the function $z(t)=\psi_{f,A}(t/ G_{\psi}(q)).$ Notice that $z(0)=\psi_{f,A}(0)=A$ and, by performing a variable substitution, $\Mel{z}{p}=(G_{\psi}(q))^p\Mel{\psi_{f,A}}{p}$ via \eqref{eq:Mel} for every $p\in (-1,0)\cup(0,p_1).$ In particular, for $p=q.$ From the definition of $G_\psi (q),$ we then obtain that $\Mel{z}{q}=(G_{\psi}(q))^q\Mel{\psi_{f,A}}{q}=\Mel{\psi}{q}.$ Thus, from \eqref{eq:Mel}, one obtains
    $$0=\Mel{\psi}{q}-\Mel{z}{q}=\int_0^\infty t^{q-1}(\psi(t)-z(t))dt.$$
    Consequently, the function $\psi(t)-z(t)$ changes signs at least once. But actually, this function changes sign exactly once. Indeed, let $t_0$ be the smallest positive value such that $\psi(t_0)=z(t_0).$ Then, $f\circ\psi(t_0)=f\circ z(t_0).$ Now, $f\circ z$ is affine. If $f\in\{F,Q\},$ then $f\circ \psi$ is concave and the slope of $f\circ z$ is negative. Since $\psi(0)=z(0)=A,$ one has that $f\circ \psi (t) \geq f\circ z(t)$ on $[0,t_0].$ From the concavity, we must then have that $f\circ \psi (t) \leq f\circ z(t)$ on $[t_0,\infty).$ Similarly, if $f=R,$ then $f\circ \psi$ is convex and the slope of $f\circ z$ is positive. Hence, $f\circ \psi (t) \leq f\circ z(t)$ on $[0,t_0]$ and $f\circ \psi (t) \geq f\circ z(t)$ on $[t_0,\infty).$ Taking inverses, we obtain in either case that $\psi (t) \geq z(t)$ on $[0,t_0]$ and $\psi (t) \leq z(t)$ on $[t_0,\infty).$
    
    Next, define $$g(t)=\int_t^\infty u^{q-1}(\psi(u)-z(u))du.$$ Clearly, $g(0)=g(\infty)=0.$ One has $g^\prime (t)=-t^{q-t}(\psi(t)-z(t)).$ Thus, $g$ is non-increasing on $[0,t_0]$ and non-decreasing on $[t_0,\infty).$ Hence $g(t)\leq 0$ for all $t\in [0,\infty).$ Next, pick $r \in (q,0).$ From integration by parts, one obtains
    $$\Mel{\psi}{r}-\Mel{z}{r}=\int_0^\infty t^{r-q}t^{q-1}(\psi(t)-z(t))dt=(r-q)\int_0^\infty t^{r-q-1}g(t)dt \leq 0.$$
    Hence,
    $$\Mel{\psi}{r}\leq \Mel{z}{r}=(G_{\psi}(q))^r\Mel{\psi_{f,A}}{r}<0.$$
    We deduce that
    \begin{equation}\Omega_{f,\psi}(r)=\frac{\Mel{\psi}{r}}{\Mel{\psi_{f,A}}{r}} \geq (G_{\psi}(q))^r >0
    \label{eq:bootstrap}
    \end{equation}
    for every $r\in (q,0).$ Sending $q\to p_0,$ we obtain $\Omega_{f,\psi}(p)>0$ for every $p\in (p_0,0)$ and thus for $p\in(p_0,p_1).$ One immediately obtains that $G_\psi(p)$ is well-defined and analytic on $(p_0,p_1).$ Finally, taking the $r$th root of \eqref{eq:bootstrap} yields for $p_0<q<r<0$ that
    $$G_\psi(r) = (\Omega_{f,\psi}(r))^{1/r}\leq G_\psi(q),$$
    i.e. $G_\psi(p)$ is non-increasing on $(p_0,0).$ Suppose there exists an $r\in (q,0)$ such that $G_\psi(q)=G_\psi(r).$ Then, there is equality in \eqref{eq:bootstrap}. But this yields $g(t)=0$ for almost all $t.$ We take a moment to notice that this then yields $G_\psi(q)=G_\psi(r)$ for every $q,r\in (p_0,0).$ Anyway, since $g(t)=0$ for almost all $t$, we have $\psi(t)=z(t)$ for almost all $t$. Hence, the concave function $f\circ \psi(t)$ equals the affine function $f\circ z(t)$ for almost all $t$ and thus for all $t$. Consequently, $\psi(t)\equiv z(t)=\psi_{f,A}(t/ G_{\psi}(q)).$ Conversely, suppose that $\psi(t)=\psi_{f,A}(t/ \alpha)$ for some $\alpha>0.$ Then, direct substitution yields $G_{\psi}(p)=\alpha$ on $(p_0,0).$ Notice that $\Mel{z}{q}=(G_{\psi}(q))^q\Mel{\psi_{f,A}}{q}=\Mel{\psi}{q}$ is also true for any $q \in (0,p_1).$ Consequently, by picking any $r\in (q,p_1),$ we repeat the above arguments and deduce again that $$\Mel{\psi}{r}\leq \Mel{z}{r}=(G_{\psi}(q))^r\Mel{\psi_{f,A}}{r}.$$ This time, however, $\Mel{\psi_{f,A}}{r} >0.$ Consequently, this immediately implies that $$G_\psi(r) = (\Omega_{f,\psi}(r))^{1/r}\leq G_\psi(q)$$
    for every $0<q\leq r < p_1.$ This establishes that $G_\psi(p)$ is non-increasing on $(0,p_1)$ as well. The argument for the equality conditions is the same.
\end{proof}
\begin{proof}[Proof of Theorem~\ref{t:ber}]
    Let $w$ be the concavity of our measure $\mu.$ Next, let $\psi(t)=\mu(\left\{x\in K:f(x) \geq t\right\}).$ Notice this $\psi$ is non-increasing, and thus $p_0$ from the statement of Lemma~\ref{l:mel_ber} is $-1$. Then, for $p\in(-1,0):$
    \begin{align*}\Omega_{w,\mu(K),\psi}(p)&=\frac{\Mel{\psi}{p}}{\Mel{\psi_{w,\mu(K)}}{p}}
    \\
    &=C^p(p,\mu,K)\frac{p}{\mu(K)}\int_{0}^\infty t^{p-1} (\mu(\left\{x\in K:f(x)\geq t\right\})-\mu(K))dt
    \\
    &=C^p(p,\mu,K)\frac{1}{\mu(K)}\int_K f^p(x)d\mu(x)\end{align*}
    via the layer cake formula for $p\in (-1,0)$; similar computations yield the case for $p>0,$ and $p=0$ follows from limits. 
    Thus, we obtain from Lemma~\ref{l:mel_ber}, Item 3, that the function
    $$G_{\psi}(p)=C(p,\mu,K)\left(\frac{1}{\mu(K)}\int_K f^p(x)d\mu(x)\right)^{1/p}$$
    is non-increasing for $p\in(-1,p_{\text{max}}).$ Furthermore, $G_{\psi}(p) \equiv \alpha>0$, if, and only if, $$\mu(\left\{x\in K:f(x)\geq t\right\})=\psi(t)=\psi_{w,\mu(K)}(t/ \alpha).$$ 
    
    We now insert the appropriate $\psi_{w,\mu(K)}$, starting with the case $w=F$. This is precisely
\begin{equation}\alpha t=1-\frac{F(\mu(\{f\geq t\}))}{F(\mu(K))} \longleftrightarrow \mu(\{f\geq t\})=F^{-1}\left[F(\mu(K))\left(1-\alpha t\right)\right].
\label{eq:level_sets_eq}
\end{equation}
We then evaluate the above at $t=\|f\|_{\infty},$ to obtain $$\alpha = \left(1-\frac{F(0)}{F(\mu(K))}\right)/\|f\|_{\infty}.$$ On the other hand, we also know that, for all $p\in (0,\infty)$ we have 
\begin{align*}\alpha^p=\frac{\int_0^1 F^{-1}\left[F(\mu(K))(1-t)\right]t^{p-1} dt}{\int_0^{\|f\|_\infty} \mu(\{f\geq t\})t^{p-1}dt}.\end{align*}
Inserting the formula for $\alpha$ and the formula of $\mu(\{f\geq t\})$ from \eqref{eq:level_sets_eq}, we obtain
$$\frac{(1-\frac{F(0)}{F(\mu(K))})^p}{\|f\|_\infty^p}=\frac{\int_0^1 F^{-1}\left[F(\mu(K))(1-t)\right]t^{p-1} dt}{\int_0^{\|f\|_\infty} F^{-1}\left[F(\mu(K))\left(1-\frac{(1-\frac{F(0)}{F(\mu(K))})}{\|f\|_{\infty}}t\right)\right]t^{p-1}dt}.$$
By performing a variable substitution in the denominator, we obtain that
$$1=\frac{\int_0^1 F^{-1}\left[F(\mu(K))(1-t)\right]t^{p-1} dt}{\int_0^{(1-\frac{F(0)}{F(\mu(K))})} F^{-1}\left[F(\mu(K))\left(1-t\right)\right]t^{p-1}dt}.$$
Therefore, we have $(1-\frac{F(0)}{F(\mu(K))})=1,$ which means $F(0)=0.$

Next, we show that equality never occurs for when $w=Q$, and the case $w=R$ is similar. From integrability, we have that $Q^{-1}(-\infty)=0,$ or $Q(0)=-\infty$ (where these are understood as limits from the left and the right, respectively). On the other hand, 
we have shown equality implies $$\alpha t=Q(\mu(K))-Q(\mu(K)f_{\mu}(t)).$$ Evaluating again at $t=\|f\|_\infty$ yields $\alpha\|f\|_\infty=Q(\mu(K))-Q(0),$ which would imply that $|Q(0)| < \infty.$    
\end{proof}

\begin{proof}[Proof of Corollary~\ref{cor:ber_s}]
We have that $\mu$ is $s$-concave on the level sets of $f$, and thus the proof is a direct application of Theorem~\ref{t:ber}; in the first case, the coefficients become a beta function and in the second case they become a gamma function. As for the third case, a bit more work is required. We will show the case when $p\in (0,-1/s);$ the case when $p\in (-1,0)$ is exactly the same (using the analytic continuation of the Beta function), and then the case $p=0$ follows from limits. Inserting $R(x)=x^s, s<0$ yields
$$C(p,s)=\left(p\int_0^\infty \left(1+t\right)^{1/s}t^{p-1} dt\right)^{-1}.$$
Focus on the function $q(t)=\left(1+t\right)^{1/s}t^{p-1}.$ For this function to be integrable near zero, we require $-1<p-1,$ and, for the integrability near infinity, we require $\frac{1}{s}+p-1<-1.$ Thus, $p\in (0,-1/s).$ We will now manipulate $C(p,s)$ to obtain a more familiar formula. Consider the variable substitution given by $t=\frac{z}{1-z}.$ Writing $z$ as a function of $t,$ this becomes
$$z=1-\frac{1}{1+t}\quad \longrightarrow \quad z^\prime (t)=\frac{1}{(1+t)^2}.$$ 

\noindent As $t\to 0^+, z\to 0^+,$ and as $t\to \infty, z\to 1^-.$ We then obtain that
\begin{align*}C(p,s)&=\left(p\int_0^1 \left(1-z\right)^{-(p+1/s)-1}z^{p-1} dz\right)^{-1}=\frac{\Gamma\left(-\frac{1}{s}\right)}{p\Gamma\left(p\right)\Gamma\left(-p-\frac{1}{s}\right)}
\\
&=s\left(p+\frac{1}{s}\right)\frac{\Gamma\left(1-\frac{1}{s}\right)}{\Gamma\left(1+p\right)\Gamma\left(1-p-\frac{1}{s}\right)},\end{align*}
which equals our claim.
\end{proof}

\begin{proof}[Proof of Theorem~\ref{t:sberh}]
From the assumptions on the measure $\mu$, we obtain that $d\mu(x)=\phi(x)dx$ for some $p=s/(1-ns)$-concave function $\phi.$ Furthermore, $\phi$ is $(1/s)-n$ homogeneous. Observe that Corollary~\ref{cor:ber_s} yields the inequality; all that remains to show is the equality conditions. By hypothesis, the maximum of the function $f$ is obtained at the origin. Equality conditions of Corollary~\ref{cor:ber_s} imply that
$$\|f\|^{1/s}_\infty=\frac{\int_0^{\|f\|_\infty} \mu_K(\{f\geq t\})t^{1/s-1}dt}{\int_0^1 (1-t)^{1/s}t^{1/s-1} dt}.$$ Using \eqref{eq:star_form}, this implies that 
\begin{align*}\int_K f^{1/s}(x)d\mu(x)&=\frac{\mu(K)}{s}\int_0^1 [\|f\|_\infty(1-t)]^{1/s} dt
\\
&=\int_{\s^{n-1}}\phi(\theta)\rho_K(\theta)^{1/s}d\theta\int_0^1 [\|f\|_\infty(1-t)]^{1/s}t^{1/s-1}dt.\end{align*}
Using Fubini's theorem, a variable substitution $t\to t/\rho_K(\theta)$ and the homogeneity of $\phi$ yields
\begin{align*}\int_K f^{1/s}(x)d\mu(x) &=\int_{\s^{n-1}}\int_0^{\rho_K(\theta)} \left[\|f\|_\infty\left(1-\frac{t}{\rho_K(\theta)}\right)\right]^{1/s}t^{n-1}\phi(t\theta)dtd\theta
\\
&=\int_K\left[\|f\|_\infty\left(1-\frac{1}{\rho_K(x)}\right)\right]^{1/s}dx.\end{align*}
One has from \eqref{eq:supporting_line_bound} that a concave function $f$ supported on $K\in\conbod_0$ whose maximum is at the origin satisfies
$$f^{1/s}(x) \geq \left[\|f\|_\infty\left(1-\frac{1}{\rho_K(x)}\right)\right]^{1/s}, \; x\in K\setminus\{0\}. $$
By the above integral, we have equality.
\end{proof}
 We next obtain an interesting result by perturbing Theorem~\ref{t:sberh}, inspired by the standard proof (see e.g. \cite{gardner_book}) of Minkowski's first inequality by perturbing the Brunn-Minkowski inequality.

\begin{corollary}
 Let $\mu$ be a Radon measure that is $s$-concave, $1/s$-homogeneous, $s\in (0,1/n]$, and suppose that $\ell_K$ is given by \eqref{eq:lk} for some $K\in\conbod$.  Let $\psi$ be a concave function supported on $K$, and suppose $0<p\leq q < \infty.$  Then, one has

$${{\frac{1}{s}+p}\choose \frac{1}{s}} \int_K \ell_K^p(x)\left(\frac{\psi(x)}{\ell_K(x)}\right)d\mu(x) \geq {{\frac{1}{s}+q}\choose \frac{1}{s}} \int_K \ell_K^q(x)\left(\frac{\psi(x)}{\ell_K(x)}\right)d\mu(x).$$
\label{cor:perturb}
\end{corollary}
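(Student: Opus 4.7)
The plan is to imitate the classical derivation of Minkowski's first inequality from the Brunn--Minkowski inequality by differentiating Theorem~\ref{t:sberh} along a one-parameter family that passes through its equality case. For $\epsilon \geq 0$, set
$$f_\epsilon(x) := \ell_K(x) + \epsilon\,\psi(x),$$
which is non-negative, concave, and supported on $K$, hence admissible in Theorem~\ref{t:sberh}. Writing $A(\epsilon)$ and $B(\epsilon)$ for the left- and right-hand sides of the resulting inequality (at parameters $p$ and $q$, respectively), one has $A(\epsilon) \geq B(\epsilon)$ for every $\epsilon \geq 0$.

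At $\epsilon = 0$, $f_0 = \ell_K$ realizes equality in Theorem~\ref{t:sberh}: its maximum occurs at the origin and equals $1$, and $\ell_K = \|\ell_K\|_\infty \ell_K$ trivially. Combining the normalization $\int_K \ell_K^r\, d\mu = \binom{1/s+r}{1/s}^{-1}\mu(K)$ recorded in the excerpt with the elementary symmetry $\binom{1/s+r}{r} = \binom{1/s+r}{1/s}$, one checks that $A(0) = B(0) = 1$. Hence the inequality $A(\epsilon) \geq B(\epsilon)$ on $[0,\infty)$ degenerates to equality at $\epsilon = 0$, and passing to right derivatives gives $A'(0^+) \geq B'(0^+)$. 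Differentiating $I_p(\epsilon) := \int_K f_\epsilon^p\, d\mu$ under the integral and applying the chain rule, with the prefactor cleared using $A(0) = 1$, yields
$$A'(0^+) \;=\; \binom{\frac{1}{s}+p}{\frac{1}{s}}\,\frac{1}{\mu(K)}\int_K \ell_K^{p-1}\psi\, d\mu,$$
together with the analogous formula for $B'(0^+)$ (with $q$ replacing $p$). Rewriting $\ell_K^{p-1}\psi = \ell_K^{p}\cdot(\psi/\ell_K)$, the inequality $A'(0^+) \geq B'(0^+)$ is precisely the desired conclusion.

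The step that requires the most care is the justification of differentiation under the integral sign when $p \in (0,1)$, where $\ell_K^{p-1}$ blows up near $\partial K$. Here, the pointwise bound $f_\epsilon \geq \ell_K$ combined with $p - 1 < 0$ yields $f_\epsilon^{p-1}\psi \leq \ell_K^{p-1}\psi$ uniformly for $\epsilon \in [0,1]$; under the standing hypothesis that the integrals appearing in the conclusion are finite, this supplies the integrable dominating function required for dominated convergence, and in fact shows that $\epsilon \mapsto I_p(\epsilon)$ is continuously right-differentiable at $0$. With this justification in place, the perturbation argument outlined above closes out the proof.
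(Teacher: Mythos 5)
Your proof is correct and follows essentially the same route as the paper: perturb the equality case $\ell_K$ of Theorem~\ref{t:sberh} by $\epsilon\psi$, use that the resulting Berwald difference is nonnegative and vanishes at $\epsilon=0$, and differentiate at $\epsilon=0^{+}$ to extract the inequality. Your one-sided perturbation and the dominated-convergence justification for $p\in(0,1)$ are in fact slightly more careful than the paper's sketch, which perturbs for $|t|<\delta$ and differentiates without further comment.
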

\begin{proof}
Let $z_K(t,x)$ be a concave perturbation of $\ell_K$ by $\psi$, i.e. $\delta>0$ is picked small enough so that $z_K(t,x)=\ell_K(x)+t\psi(x)$ is concave with maximum at the origin for all $x\in K$ and $|t|<\delta.$ Next, consider the function given by, for $0< p\leq q$
\begin{align*}B_K(t)=&\left({{\frac{1}{s}+p}\choose \frac{1}{s}}  \frac{1}{\mu(K)}\int_K z_K(x,t) d\mu(x)\right)^{1/p}
\\
&-\left({{\frac{1}{s}+q}\choose \frac{1}{s}} \frac{1}{\mu(K)}\int_K z_K(x,t) d\mu(x)\right)^{1/q},\end{align*}
from Berwald's inequality in Theorem~\ref{t:sberh}, this function is greater than or equal to zero for all $|t|<\delta,$ and equals zero when $t=0.$ Hence, the derivative of this function is non-negative at $t=0$. By taking the derivative of $B_K(t)$ in the variable t, evaluating at $t=0$, and setting this computation be greater than or equal to zero, one immediately obtains the result.
\end{proof}

We now prove the corollaries for the Gaussian measure and rotational invariant log-concave measures.
\begin{proof}[Proof of Corollary~\ref{cor:ber_gauss}]
From Borell's classification, the Gaussian measure is log-concave, and thus one can use the second case of Corollary~\ref{cor:ber_s} for the first inequality. For the second inequality, the function $\Phi^{-1}$ behaves logarithmically, that is one can apply the second case of Theorem~\ref{t:ber}. Finally, for the third inequality, note that if $f$ is a concave function supported on some $K\in\conbod_0$ with maximum at the origin, then the level sets of $f$ are also in $\conbod_0,$ and thus one can apply the $\frac{1}{2n}$-concavity of the Gaussian measure over $\conbod_0$ and use the first case of Corollary~\ref{cor:ber_s}.
\end{proof}
\begin{proof}[Proof of Corollary~\ref{cor:ber_rot_invar}]
Notice that if $f$ is an even, concave function supported on a symmetric $K\in\conbod_0$, then the maximum of $f$ is at the origin (for every $x\in K, -x\in K$ and so $f(0)=f(\frac{1}{2}x+\frac{1}{2}(-x))\geq \frac{1}{2}f(x)+\frac{1}{2}f(-x)=f(x)$) and the level sets of $f$ are all symmetric convex bodies. Thus, the result follows from the $1/n$-concavity of measures in $\mathcal{M}_n$.
\end{proof}

\subsection{Applications}
We conclude this section by showing a few applications. The first example uses that the support of $f$ in Theorem~\ref{t:ber} need not be compact.
\begin{theorem}
\label{t:log_norm}
Let $\theta\in\s^{n-1}$. Denote $H=\theta^\perp$ and $H_+=\{x\in\R^n:\langle x,\theta \rangle >0\}.$ Denote 
$$\langle x,\theta \rangle_+=\langle x,\theta \rangle\chi_{H_+}(x)=
\begin{cases}
\langle x,\theta \rangle \; &\text{if} \; \langle x,\theta \rangle>0,
\\
0 &\text{otherwise.}
\end{cases}
$$
Then, for every Borel measure $\mu$ finite on $H_+$ with one of the following concavity conditions on subsets of $H_+$:
\begin{enumerate}
    \item If $\mu$ is $F$-concave, where $F:[0,\mu(H_+)]\to [0,\infty)$ is an increasing and invertible function one has
    $$\frac{\left(\int_{\R^n} \langle x,\theta \rangle_+^qd\mu(x)\right)^{1/q}}{\left(\int_{\R^n}\langle x,\theta \rangle_+^pd\mu(x)\right)^{1/p}}\leq \frac{\left(q\int_0^1 \left(F^{-1}\left[F(\mu(H_+))(1-t)\right]-\mu(H_+)\right)t^{q-1} dt+\mu(H_+)\right)^{1/q}}{\left(p\int_0^1 \left(F^{-1}\left[F(\mu(H_+))(1-t)\right]-\mu(H_+)\right)t^{p-1} dt+\mu(H_+)\right)^{1/p}}$$
    for every $-1<p\leq q <\infty$ where the integrals exist.  In particular, if $F(x)=x^s, s>0,$ one obtains
$$\left(\int_{\R^n} \langle x,\theta \rangle_+^qd\mu(x)\right)^{1/q}\leq\mu(H_+)^{\frac{1}{q}-\frac{1}{p}} \frac{{{\frac{1}{s}+p}\choose p}^{1/p}}{{{\frac{1}{s}+q}\choose q}^{1/q}}\left(\int_{\R^n}\langle x,\theta \rangle_+^pd\mu(x)\right)^{1/p}.$$

\item If $\mu$ is $Q$-concave, where $Q:(0,\mu(H_+)]\to (-\infty,\infty)$ is an increasing and invertible function one has
$$\frac{\left(\int_{\R^n} \langle x,\theta \rangle_+^qd\mu(x)\right)^{1/q}}{\left(\int_{\R^n}\langle x,\theta \rangle_+^pd\mu(x)\right)^{1/p}}\leq \frac{\left(q\int_0^\infty Q^{-1}\left[Q(\mu(H_+))-t\right]t^{q-1} dt\right)^{1/q}}{\left(p\int_0^\infty Q^{-1}\left[Q(\mu(H_+))-t\right]t^{p-1} dt\right)^{1/p}}$$
for every $0<p\leq q <\infty$ where the integrals exist; the case for $-1<p\leq q < \infty$ can be deduced. For the Gaussian measure especially, one can set $Q=\Phi^{-1}$ and obtain
$$\frac{\left(\int_{\R^n} \langle x,\theta \rangle_+^qd\gamma_n(x)\right)^{1/q}}{\left(\int_{\R^n}\langle x,\theta \rangle_+^pd\gamma_n(x)\right)^{1/p}}\leq \frac{\left(q\int_0^\infty \Phi\left[\Phi^{-1}(\gamma_n(H_+))-t\right]t^{q-1} dt\right)^{1/q}}{\left(p\int_0^\infty \Phi\left[\Phi^{-1}(\gamma_n(H_+))-t\right]t^{p-1} dt\right)^{1/p}}.$$

\noindent If $Q(x)=\log(x)$ one obtains for every $-1<p\leq q<\infty$ that
$$\left(\int_{\R^n} \langle x,\theta \rangle_+^qd\mu(x)\right)^{1/q}\leq\mu(H_+)^{\frac{1}{q}-\frac{1}{p}} \frac{{\Gamma\left(q+1\right)}^{1/q}}{{\Gamma\left(p+1\right)}^{1/p}}\left(\int_{\R^n}\langle x,\theta \rangle_+^pd\mu(x)\right)^{1/p}.$$

\item If $\mu$ is $R$-concave, where $R:(0,\mu(H_+)]\to (0,\infty)$ is a decreasing and invertible function one has
$$\frac{\left(\int_{\R^n} \langle x,\theta \rangle_+^qd\mu(x)\right)^{1/q}}{\left(\int_{\R^n}\langle x,\theta \rangle_+^pd\mu(x)\right)^{1/p}}\leq \frac{\left(q\int_0^\infty R^{-1}\left[R(\mu(H_+))(1+t)\right]t^{q-1} dt\right)^{1/q}}{\left(p\int_0^\infty R^{-1}\left[R(\mu(H_+))(1+t)\right]t^{p-1} dt\right)^{1/p}}$$
for every $0<p\leq q <\infty$ where the integrals exist; the case for $-1<p\leq q < \infty$ can be deduced. In particular, if $R(x)=x^s, s<0,$ and $-1<p\leq q < -1/s,$ one obtains
$$\left(\int_{\R^n} \langle x,\theta \rangle_+^qd\mu(x)\right)^{1/q}\leq\mu(H_+)^{\frac{1}{q}-\frac{1}{p}} \frac{\left(s\left(p+\frac{1}{s}\right){{-\frac{1}{s}}\choose p}\right)^{1/p}}{\left(s\left(q+\frac{1}{s}\right){{-\frac{1}{s}}\choose q}\right)^{1/q}}\left(\int_{\R^n}\langle x,\theta \rangle_+^pd\mu(x)\right)^{1/p}.$$

\end{enumerate}

\end{theorem}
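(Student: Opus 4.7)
The plan is to apply Theorem~\ref{t:ber} to the unbounded convex set $K = H_+$ and the non-negative concave function $f(x) = \langle x, \theta\rangle_+$. This $f$ is linear (hence concave) on $H_+$, and for each $t > 0$ its level set $\{f \geq t\} = \{x : \langle x, \theta\rangle \geq t\}$ is a half-space contained in $H_+$; by hypothesis, $\mu$ satisfies the prescribed concavity on a family of convex subsets of $H_+$ containing these level sets. The hypotheses of Theorem~\ref{t:ber} require only that $0 < \mu(K) < \infty$ and that the relevant moments be integrable, so no compactness of $K$ is needed; the fact that $\|f\|_\infty = \infty$ simply means equality can never be attained. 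Rearranging the conclusion of Theorem~\ref{t:ber} gives, for $-1 < p \leq q < p_{\max}$,
\begin{equation*}
\Bigl(\int_{\R^n} \langle x, \theta\rangle_+^q \, d\mu\Bigr)^{1/q} \leq \mu(H_+)^{1/q - 1/p} \frac{C(p, \mu, H_+)}{C(q, \mu, H_+)} \Bigl(\int_{\R^n} \langle x, \theta\rangle_+^p \, d\mu\Bigr)^{1/p}.
\end{equation*}

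To match the form stated in Theorem~\ref{t:log_norm}, observe that in Cases~(2) and (3) the quantity $\mu(H_+) C(p, \mu, H_+)^{-p}$ is, by its definition in Theorem~\ref{t:ber}, exactly $p \int_0^\infty Q^{-1}[Q(\mu(H_+)) - t] t^{p-1} dt$ (respectively, $p \int_0^\infty R^{-1}[R(\mu(H_+))(1+t)] t^{p-1} dt$), so the rearranged inequality becomes the stated bound verbatim. In Case~(1), one additionally uses the identity $p \int_0^1 t^{p-1} dt = 1$ to rewrite
\begin{equation*}
p \int_0^1 F^{-1}[F(\mu(H_+))(1-t)] t^{p-1} dt = p \int_0^1 \bigl(F^{-1}[F(\mu(H_+))(1-t)] - \mu(H_+)\bigr) t^{p-1} dt + \mu(H_+) = \mu(H_+) C(p, \mu, H_+)^{-p},
\end{equation*}
which is exactly the form appearing in the stated bound for Case~(1).

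The explicit specializations are then direct computations. For $F(x) = x^s$ ($s > 0$), $F^{-1}[F(\mu)(1-t)] = \mu(1-t)^{1/s}$, and a beta-function integral yields $C(p,\mu,H_+)^p = \binom{1/s + p}{p}$; for $Q = \log$, $Q^{-1}[Q(\mu) - t] = \mu e^{-t}$, and a gamma-function integral yields $C(p,\mu,H_+)^p = \Gamma(p+1)^{-1}$; for $Q = \Phi^{-1}$ (the Ehrhard concavity of the Gaussian) no further simplification is available and the integral is left as stated; for $R(x) = x^s$ ($s < 0$), the substitution $z = t/(1+t)$ used in the proof of Corollary~\ref{cor:ber_s} yields $C(p,\mu,H_+)^p = s(p+1/s)\binom{-1/s}{p}$ on the integrability range $p \in (0, -1/s)$. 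The extension to $p \in (-1, 0)$ in Cases~(2) and (3) follows immediately from the analytic-continuation framework of Lemma~\ref{l:mel_ber}, with $p_0 = -1$ because $\psi(t) = \mu(\{f \geq t\})$ is automatically non-increasing. The main conceptual point to verify is that the Mellin machinery handles the unboundedness of $K = H_+$ and of $f$, but this is immediate: Lemma~\ref{l:mel_ber} is phrased entirely in terms of $\psi$ on $[0, \infty)$ and demands no boundedness beyond integrability of $t^{p-1}\psi(t)$, which is the standing hypothesis on the moment integrals.
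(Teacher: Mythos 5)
Your proposal is correct and follows exactly the route the paper intends (the theorem is stated as a direct application of Theorem~\ref{t:ber}, using that the support $K=H_+$ of $f(x)=\langle x,\theta\rangle_+$ need not be compact), and your identification of $\mu(H_+)C(p,\mu,H_+)^{-p}$ with the integrals in the statement, together with the beta/gamma computations of the constants, is accurate. The only cosmetic caveat is that the identity $p\int_0^1 t^{p-1}\,dt=1$ is valid only for $p>0$ (for $p\in(-1,0)$ that integral diverges), but there the subtracted form appearing in the statement is literally the definition of $C(p,\mu,H_+)^{-p}$ in Theorem~\ref{t:ber}, so no rewriting is needed and the argument stands.
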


Finally, let $\mu$ be a Borel measure finite on some convex $K\subset \R^n.$ Suppose $\mu$ is either $F,Q$ or $R$ concave, where the functions $F,Q$ and $R$ are as given in Theorem~\ref{t:ber}. Next, consider a non-negative function $f$ so that $f^\beta$ is bounded and concave on $K$ for some $\beta>0$. Inserting $f^\beta,$ into Theorem~\ref{t:ber} and picking appropriate choices of $p$ and $q,$ we obtain that for every $q\geq 1$ one has
\begin{equation}\left(\int_K f(x)^{q} d\mu(x)\right)^{1/q} \leq \mu(K)^{\frac{1-q}{q}}\left(\frac{C(\frac{1}{\beta},\mu,K)}{C(\frac{q}{\beta},\mu,K)}\right)^{\frac{1}{\beta}}\int_K f(x) d\mu(x),\label{eq:ber_norm_relate}\end{equation}
up to possible restrictions on admissible $\beta$ and $q$ so that all constants exist. In words, we have bounded the $L^q(K,\mu)$ norm of a bounded, non-negative, $\beta$-concave function $f$ by its $L^1(K,\mu)$ norm when $\mu$ is either $F, Q$ or $R$-concave. Examples of interest are when $\mu$ is $s$-concave. We obtain for a $s$-concave measure $\mu$ and $q\geq 1$:
\begin{enumerate}
    \item When $s>0$: \begin{equation*}\left(\int_K f(x)^{q} d\mu(x)\right)^{1/q}\leq\frac{{{\frac{1}{s}+\frac{1}{\beta}}\choose \frac{1}{\beta}}}{\mu(K)}\left(\frac{\mu(K)}{{{\frac{1}{s}+\frac{q}{\beta}}\choose \frac{q}{\beta}}}\right)^{1/q}\int_K f(x) d\mu(x).\label{eq:norm_relate_pos}\end{equation*}
    \item When $s=0$: \begin{equation*}\left(\int_K f(x)^{q} d\mu(x)\right)^{1/q}\leq\frac{\Gamma(1+\frac{1}{\beta})}{\mu(K)}\left(\frac{\mu(K)}{\Gamma(1+\frac{q}{\beta})}\right)^{1/q}\int_K f(x) d\mu(x).\label{eq:norm_relate_log}\end{equation*}
    \item When $s < 0,$ $\beta > -s$ and $q \in [1,-\frac{\beta}{s}):$
    \begin{equation*}\left(\int_K f(x)^{q} d\mu(x)\right)^{1/q}\leq\frac{s\left(q+\frac{1}{s}\right){{-\frac{1}{s}}\choose q}}{\mu(K)}\left(\frac{\mu(K)}{s\left(\frac{q}{\beta}+\frac{1}{s}\right){{-\frac{1}{s}}\choose \frac{q}{\beta}}}\right)^{1/q}\int_K f(x) d\mu(x).\label{eq:norm_relate_neg}\end{equation*}
    \end{enumerate}
    We also highlight the following examples for the Gaussian measure.
    \begin{enumerate}
    \item \begin{equation*}\left(\int_K f(x)^{q} d\gamma_n(x)\right)^{1/q}\leq\beta^{\frac{q-1}{q}}\frac{\left(q\int_0^\infty \Phi\left[\Phi^{-1}(\gamma_n(K))-t\right]t^{q-1} dt\right)^{1/q}}{\int_0^\infty \Phi\left[\Phi^{-1}(\gamma_n(K))-t\right]t^{p-1} dt}\int_K f(x) d\gamma_n(x).\end{equation*}
    
    \item If $K\in\conbod_0$ and the maximum of $f^\beta$ is obtained at the origin:
    \begin{equation*}\left(\int_K f(x)^{q} d\gamma_n(x)\right)^{1/q}\leq\frac{{{\frac{1}{2n}+\frac{1}{\beta}}\choose \frac{1}{\beta}}}{\gamma_n(K)}\left(\frac{\gamma_n(K)}{{{\frac{1}{2n}+\frac{q}{\beta}}\choose \frac{q}{\beta}}}\right)^{1/q}\int_K f(x) d\gamma_n(x).\end{equation*}
    
    \item Let $\mu$ be a measure in $\mathcal{M}_n$. If $K\in\conbod_0$ is symmetric, and $f^\beta$ is even:
    \begin{equation*}\left(\int_K f(x)^{q} d\mu(x)\right)^{1/q}\leq\frac{{{\frac{1}{n}+\frac{1}{\beta}}\choose \frac{1}{\beta}}}{\mu(K)}\left(\frac{\mu(K)}{{{\frac{1}{n}+\frac{q}{\beta}}\choose \frac{q}{\beta}}}\right)^{1/q}\int_K f(x) d\mu(x).\end{equation*}
\end{enumerate}
\noindent To see how \eqref{eq:ber_norm_relate} yields results for the relative entropy of two measures with concavity, based on the work by Bobkov and Madiman \cite{BM12} for Boltzmann-Shannon entropy, see \cite{BFLM}.

\section{Applications to Convex Geometry}
\label{sec:og_radial_bodies}
\subsection{Weighted Radial Mean Bodies}
Throughout this section, we write $\lambda_n$ for the Lebesgue measure on $\R^n$. 

One of our motivations for generalizing Berwald's inequality is to study generalizations of the projection body and radial mean bodies of a convex body. We first introduce weighted radial mean bodies. For a Borel measure $\mu$ finite on a Borel set $K$ in its support, the $p$th mean of a non-negative $f\in L^p(K,\mu)$ is
\begin{equation}
    \label{pth_mean}
    M_{p,\mu} f=\left(\frac{1}{\mu(K)}\int_K f(x)^p d\mu(x) \right)^{\frac{1}{p}}.
\end{equation}
Jensen's inequality states that $M_{\mu,p} f \leq M_{\mu,q} f$ for $p\leq q.$ From continuity, one has
$\lim_{p\to\infty} M_{p,\mu} f=\text{ess} \sup_{x\in K}f(x),$
and
$$\lim _{p \rightarrow 0} M_{p,\mu} f=\exp \left(\frac{1}{\mu(K)} \int_{K} \log f(x) d \mu(x)\right).$$ 
Recall that for a star body $K$, $\rho_K(x,\theta):=\rho_{K-x}(-\theta)=\sup\{\lambda:-\lambda\theta\in K-x\}$.
\label{sec:mupth}
\begin{definition}
Let $\mu$ be a Borel measure on $\R^n$ and $K$ a convex body contained in the support of $\mu$. Then, the $\mu$-weighted $p$th radial mean body of $K$, denoted $R_{p,\mu} K,$ is the star body whose radial function is given, for $p\in(-1,\infty)$ and $\theta\in\s^{n-1},$  as
$$\rho_{R_{p,\mu} K}(\theta)=\left(\frac{1}{\mu(K)}\int_K \rho_K(x,\theta)^p d\mu(x) \right)^{\frac{1}{p}}.$$
\end{definition}
\noindent We will show why $R_{p,\mu} K$ exists for $p\in (-1,0)$ later in this section. The usual radial mean bodies $R_p K$, first defined by Gardner and Zhang \cite{GZ98}, are precisely the bodies $R_{p,\lambda_n} K$ in our notation. Sending $p$ to $\infty$ or $0$, we obtain the limiting bodies $R_{\infty, \mu} K$ and $R_{0, \mu} K$, given in terms of their radial functions by 
\begin{align*}
    \rho_{R_{\infty,\mu} K}(\theta) &= \max_{x\in K}\rho_K(x,\theta)=\rho_{DK}(\theta), \quad \text{and} \\
\rho_{R_{0, \mu} K}(\theta) &= \exp \left(\frac{1}{\mu(K)} \int_{K} \log \rho_{K}(x, \theta) d \mu(x)\right),
\end{align*}
where $DK$ is the difference body of $K$, given by 
	\begin{equation}\label{e:differnce}
	    DK=\{x:K\cap(K+x)\neq \emptyset\}=K+(-K).
	\end{equation}

A natural question is how $R_{p,\mu} K$ behaves under linear transformations. We introduce the following notation: for a Borel measure $\mu$ and $T \in SL_n$, we denote by $\mu^T$ the pushforward of $\mu$ by $T^{-1}$; note that if $\mu$ has density $\phi$ then $\mu^T$ has density $\phi \circ T$, and $\mu^T(A)=\mu(TA)$ for a Borel set $A$.
\begin{proposition}
Let $\mu$ be a Borel measure finite on a convex body $K \subset \supp \mu$. Then, for $T \in SL_n$ and $p>-1,$ one has
$$R_{p,\mu} TK=TR_{p,\mu^T} K.$$
\end{proposition}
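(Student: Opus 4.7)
The plan is to verify equality by computing radial functions of both sides at an arbitrary $\theta\in\s^{n-1}$, then matching them via a linear change of variables.

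First, I would recall two basic facts. For any star body $L$ and $T\in SL_n$, the identity
$$\rho_{TL}(\theta)=\sup\{\lambda>0:\lambda\theta\in TL\}=\sup\{\lambda>0:\lambda T^{-1}\theta\in L\}=\rho_L(T^{-1}\theta)$$
holds, using the paper's convention that $\rho_L$ is defined on $\R^n\setminus\{0\}$. Second, the pushforward $\mu^T$ satisfies $\int g\,d\mu^T=\int g\circ T^{-1}\,d\mu$; in particular $\mu^T(K)=\mu(TK)$, so the normalizing prefactors in the two expressions for $R_{p,\mu}TK$ and $TR_{p,\mu^T}K$ will automatically agree.

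Next I would compute
$$\rho_{TR_{p,\mu^T}K}(\theta)^p=\rho_{R_{p,\mu^T}K}(T^{-1}\theta)^p=\frac{1}{\mu^T(K)}\int_K \rho_K(x,T^{-1}\theta)^p\,d\mu^T(x),$$
and then change variables $y=Tx$ to rewrite the right-hand side as
$$\frac{1}{\mu(TK)}\int_{TK}\rho_K(T^{-1}y,T^{-1}\theta)^p\,d\mu(y).$$
This reduces the entire problem to the geometric identity $\rho_K(T^{-1}y,T^{-1}\theta)=\rho_{TK}(y,\theta)$, which is the key step. Using the generalized radial function definition,
$$\rho_K(T^{-1}y,T^{-1}\theta)=\sup\{\lambda>0:T^{-1}y-\lambda T^{-1}\theta\in K\}=\sup\{\lambda>0:y-\lambda\theta\in TK\}=\rho_{TK}(y,\theta),$$
where the middle equality uses the linearity of $T^{-1}$. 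Substituting back gives $\rho_{TR_{p,\mu^T}K}(\theta)^p=\rho_{R_{p,\mu}TK}(\theta)^p$, completing the proof.

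I do not anticipate a serious obstacle: the argument is essentially bookkeeping once one carefully tracks which definition of pushforward is in use and how generalized radial functions transform. The only subtle point is ensuring that the convention $\mu^T(A)=\mu(TA)$ (rather than $\mu(T^{-1}A)$) is applied consistently when performing the substitution $y=Tx$; after that, everything falls out by linearity of $T$.
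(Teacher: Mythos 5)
Your proposal is correct and follows essentially the same route as the paper: compute the radial functions, use $\rho_{TL}(x,\theta)=\rho_L(T^{-1}x,T^{-1}\theta)$ (which you verify directly rather than cite), and transfer the integral between $\mu$ and $\mu^T$ by the change of variables $y=Tx$, with the normalizations matching since $\mu^T(K)=\mu(TK)$. The only cosmetic difference is that you start from $TR_{p,\mu^T}K$ instead of $R_{p,\mu}TK$, and like the paper you should dispose of $p=0$ by continuity since the $p$-th power computation assumes $p\neq 0$.
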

\begin{proof}
Suppose $p\in(-1,0)\cup(0,\infty)$; the case $p=0$ follows by continuity. Let $L$ be a star body in $\R^n.$ Then, one can verify from the definition (or see \cite[page 20]{gardner_book}) that
$$\rho_{TL}(x,\theta)=\rho_L(T^{-1}x,T^{-1}\theta).$$
In particular, $\rho_{TL}(\theta)=\rho_L(T^{-1}\theta).$ Then, observe that, by performing the variable substitution $x=Tz,$
\begin{align*}
\rho^{p}_{R_{p,\mu} TK}(\theta)&=\frac{1}{\mu(TK)}\int_{TK}\rho_{TK}(x,\theta)^pd\mu(x)=\frac{1}{\mu(TK)}\int_{TK}\rho_{K}(T^{-1}x,T^{-1}\theta)^pd\mu(x)
\\
&=\frac{1}{\mu^T(K)}\int_{K}\rho_{K}(z,T^{-1}\theta)^pd\mu^T(z)=\rho^{p}_{R_{p,\mu^T} K}(T^{-1}\theta)=\rho^{p}_{TR_{p,\mu^T} K}(\theta).
\end{align*}
\end{proof}

\noindent We will show that when $\mu$ is $s$-concave, $s \geq 0$, then $R_{p,\mu} K$ is a convex body for $p\geq 0$. But first, we must make a detour. 

\subsection{Weighted Projection Bodies}  A convex body $K \in \conbod $ is uniquely determined by its support function $h_K(x)=\sup \{\langle x,y\rangle \colon y \in K \}$. The dual body of $K\in\conbod_0$ is given by $K^\circ=\left\{x\in \R^n: h_K(x) \leq 1\right\}$ (notice this yields that $h_K(x)=\|x\|_{K^\circ}$). For a Borel measure $\mu$ with density $\phi$ and a $K\in\conbod$, the $\mu$-measure of the boundary of $K$, denoted $\partial K$, is 
\begin{equation}\mu^+(\partial K):=\liminf_{\epsilon\to 0}\frac{\mu\left(K+\epsilon B_2^n\right)-\mu(K)}{\epsilon}=\int_{\partial K}\phi(y)d\mathcal{H}^{n-1}(y),
\label{eq_bd}
\end{equation}
where the second equality holds if the $\liminf$ is a limit and there exists some canonical way to select how $\phi$ behaves on $\partial K.$ Here, $\mathcal{H}^{n-1}$ is the $(n-1)$-dimensional Hausdorff measure. It was folklore for quite some time that this formula holds for measures with continuous density (see e.g. K. Ball's work on the Gaussian measure \cite{Ball88}). This was proved rigorously by Livshyts \cite{GAL19}. More recently, it was shown by the first-named author and Kryvonos \cite{KL23} that the formula holds for every Borel measure $\mu$ with density $\phi$, as long as $\phi$ contains $\partial K$ in its Lebesgue set (see Lemma~\ref{l:second} below).

 Using weighted surface area measures, the centered, $\mu$-weighted projection bodies of a convex body $K$ and a Borel measure $\mu$ with continuous density $\phi$ were defined as \cite{LRZ22} the symmetric convex body whose support function is given by, for $\theta\in\s^{n-1}$,
\begin{equation}
    \label{e:mu_polar_suppot}
    h_{\widetilde{\Pi}_\mu K}(\theta)=\frac{1}{2}\int_{\partial K}|\langle\theta,n_K(y) \rangle| \phi(y) d\mathcal{H}^{n-1}(y),
\end{equation}
where $n_K:\partial K \rightarrow \s^{n-1}$ is the Gauss map, which associates an element $y\in\partial K$ with its outer unit normal. Since the set $N_K=\{x\in\partial K: n_K(x) \text{ is not well-defined}\}$ is of measure zero, we will write integrals over $\partial K\setminus N_K$ involving $n_K$ as integrals over $\partial K$ without any confusion. 

As alluded to by the discussion after \eqref{eq_bd}, the formula \eqref{e:mu_polar_suppot} is still well-defined when $\phi$ is not continuous but contains $\partial K$ in its Lebesgue set; in which case, $\phi$ on $\partial K$ is understood as, for $y\in\partial K$, $$\phi(y)=\lim_{\epsilon\to0}\frac{1}{\vol_n(y+\epsilon B_2^n)}\int_{y+\epsilon B_2^n}\phi(x)dx.$$ For such $K$ and $\phi$ ($=$ density of $\mu$), the shift of $K$ with respect to $\mu$ is given by
\begin{equation*}
     \eta_{\mu,K}=\frac{1}{2}\int_{\partial K} n_K(y) \phi(y) d\mathcal{H}^{n-1}(y)=\frac{1}{2}\int_K \nabla \phi(y) d y,
 \end{equation*}
 where the second equality holds when $\phi$ is in $C^1(K).$ Recall the notation that, if $f$ is a function, then there exists two non-negative functions, denoted $f_+$ and $f_-$, such that $f=f_+-f_-$. One can then write $|f|=f_++f_-$ and obtain $|f|-f=2f_-.$ We define the \textit{$\mu$-weighted projection body of $K$} to be the convex body $\Pi_{\mu}K$ defined via the support function, for every $\theta\in\s^{n-1}$
\begin{equation}
\label{eq:mu_weighted_bodies}
\begin{split}
    &h_{\Pi_{\mu}K}(\theta):=h_{\widetilde\Pi_\mu K}(\theta)-\langle\eta_{\mu, K},\theta\rangle
    \\
    &=\frac{1}{2}\int_{\partial K}|\langle\theta,n_K(y) \rangle| \phi(y) d\mathcal{H}^{n-1}(y)-\frac{1}{2}\int_{\partial K} \langle\theta,n_K(y) \rangle \phi(y)d\mathcal{H}^{n-1}(y)
    \\
    &=\int_{\partial K}\langle\theta,n_K(y) \rangle_{-} \phi(y) d\mathcal{H}^{n-1}(y),
    \end{split}
\end{equation}
where the last integral is to emphasize that $\Pi_{\mu}K$ contains the origin in its interior. In the case when $\mu=\lambda_n$, one has
$\Pi K :=\widetilde{\Pi}_{\lambda^n} K=\Pi_{\lambda^n}K,$
where $\Pi K$ is the projection body of $K$. This projection body is a fundamental tool in convex geometry; see e.g. \cite{gardner_book}. It turns out that for $\theta\in\mathbb{S}^{n-1}$:
\begin{equation}
	\label{e:proj_support}
	    h_{\Pi K}(\theta)=\frac{1}{2}\int_{\partial K}|\langle \theta,n_K(y) \rangle| d\mathcal{H}^{n-1}(y)=\vol_{n-1}\left(P_{\theta^{\perp}}K\right),
	\end{equation}
where the first equality is from \eqref{e:mu_polar_suppot}, the orthogonal projection of $K$ onto a linear subspace $H$ is denoted by $P_H K$, and the last equality is known as \textit{Minkowski's projection formula}. We next introduce the weighted covariogram of a convex body.

\subsection{The Covariogram and Radial Mean Bodies} 
\begin{definition}
Let $K$ be a convex body in $\R^n.$ Then, for a Borel measure $\mu$, the $\mu$-\textit{covariogram} of $K$ is the function given by
\begin{equation}
g_{\mu,K}(x)=\mu(K\cap(K+x)).
\label{covario}
\end{equation}
\end{definition}
\noindent The classical covariogram of $K$ is given by $g_K:=g_{\lambda_n,K}$. In \cite{LRZ22}, the following was proven, which extends the volume case first shown by Matheron \cite{MA}. Recall that a domain is an open, connected set with non-empty interior, and that a function $q:\Omega\to\R$ is \textit{Lipschitz} on a bounded domain $\Omega$ if, for every $x,y\in\Omega$, one has
	$|q(x)-q(y)|\leq C|x-y|$ for some $C>0$.
\begin{proposition}[The radial derivative of the covariogram, \cite{LRZ22}]
\label{p:deriv_g_mu_covario}
Let $K\in\conbod$. Suppose $\Omega$ is a domain containing $K$, and consider a Borel measure $\mu$ with density $\phi$ locally Lipschitz on $\Omega$. Then,
\begin{equation}\label{e:deriv_g_mu_covario}
    \diff{g_{\mu,K}(r\theta)}{r}\bigg|_{r=0}=-h_{\Pi_\mu K}(\theta).
    \end{equation}
\end{proposition}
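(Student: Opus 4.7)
The plan is to identify the ``lost region'' $K\setminus(K+r\theta)$ as an infinitesimal shell along the portion of $\partial K$ facing the direction $-\theta$, and compute its $\mu$-measure to first order in $r$. Working with $r\to 0^+$ (the identity is naturally a right-hand derivative, since the covariogram of a convex body along a ray need not be differentiable at the origin in the two-sided sense), one has
$$
g_{\mu,K}(r\theta)-g_{\mu,K}(0)=-\mu\bigl(K\setminus(K+r\theta)\bigr),
$$
so it suffices to show
$$
\lim_{r\to 0^+}\frac{\mu(K\setminus(K+r\theta))}{r}=h_{\Pi_\mu K}(\theta).
$$

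Next I would foliate the shell. Set
$$
\partial^- K=\{y\in\partial K:\langle n_K(y),\theta\rangle<0\}.
$$
A point $x\in K\setminus(K+r\theta)$ can, for $r$ sufficiently small, be written uniquely as $x=y+s\theta$ with $y\in\partial^-K$ (on the full-$\mathcal{H}^{n-1}$-measure subset where $n_K$ exists) and $s\in(0,\min(r,s_{\max}(y))]$, where $s_{\max}(y)$ is the largest value with $y+s\theta\in K$. Applying the area formula to the bi-Lipschitz map $(y,s)\mapsto y+s\theta$ produces the Jacobian $|\langle\theta,n_K(y)\rangle|$, yielding
$$
\mu(K\setminus(K+r\theta))=\int_{\partial^-K}|\langle\theta,n_K(y)\rangle|\int_0^{\min(r,s_{\max}(y))}\phi(y+s\theta)\,ds\,d\mathcal{H}^{n-1}(y).
$$

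Dividing by $r$, for each fixed $y\in\partial^-K$ the inner average $r^{-1}\int_0^{\min(r,s_{\max}(y))}\phi(y+s\theta)\,ds$ tends to $\phi(y)$: for small enough $r$ the upper limit is just $r$, and local Lipschitzness $|\phi(y+s\theta)-\phi(y)|\le Cs$ forces the average to lie within $Cr/2$ of $\phi(y)$. The same Lipschitz estimate provides a uniform dominating function on a neighborhood of $K$, so dominated convergence gives
$$
\lim_{r\to 0^+}\frac{\mu(K\setminus(K+r\theta))}{r}=\int_{\partial^-K}|\langle\theta,n_K(y)\rangle|\phi(y)\,d\mathcal{H}^{n-1}(y),
$$
and this matches $h_{\Pi_\mu K}(\theta)$ via the last line of \eqref{eq:mu_weighted_bodies}, since $\langle\theta,n_K(y)\rangle_-=|\langle\theta,n_K(y)\rangle|$ on $\partial^-K$ and vanishes on its complement.

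The main obstacle is rigorously justifying the foliation identity under merely local Lipschitz regularity of $\phi$. One must verify that $(y,s)\mapsto y+s\theta$ is injective on the tubular slab for small $r$ (which rests on the convexity of $K$ at almost every boundary point), and that the area formula applies despite the non-smoothness of $\phi$. Points where $n_K$ is undefined, as well as the transverse boundary $\{y\in\partial K:\langle n_K(y),\theta\rangle=0\}$, are $\mathcal{H}^{n-1}$-null and contribute a lower-order shell; these sets can be safely excised. Once the foliation identity is secured, the passage to the limit is elementary given the Lipschitz hypothesis.
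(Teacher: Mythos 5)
Your proof is correct, and it takes a genuinely different route from the paper. The paper never reproves this proposition directly (it is imported from \cite{LRZ22}); what it does instead is prove the strengthened version, Theorem~\ref{t:variationalformula}, by identifying $K\cap(K+r\theta)$ as the Wulff shape of $u\mapsto h_K(u)-r\langle u,\theta\rangle_-$ and then invoking the Aleksandrov-type variational formula of Lemma~\ref{l:second}, which only needs the density to contain $\partial K$ in its Lebesgue set. Your argument is a self-contained shell computation: $g_{\mu,K}(0)-g_{\mu,K}(r\theta)=\mu(K\setminus(K+r\theta))$, foliation of the lost region over $\partial^-K=\{y\in\partial K:\langle n_K(y),\theta\rangle<0\}$ by segments in direction $\theta$, the area formula with tangential Jacobian $|\langle\theta,n_K(y)\rangle|$, and dominated convergence; the identification with $h_{\Pi_\mu K}(\theta)$ via the last line of \eqref{eq:mu_weighted_bodies} is then immediate. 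The technical caveats you flag do go through as you indicate: injectivity holds because $\langle n_K(y),\theta\rangle<0$ forces $y-t\theta\notin K$ for all $t>0$, so such $y$ is the unique exit point of its chord; points whose exit point has $\langle n_K(y),\theta\rangle=0$ lie on $\partial K$ and are Lebesgue-null; and $N_K\times(0,r)$ has null image under a Lipschitz map. Two small remarks: the map $(y,s)\mapsto y+s\theta$ is Lipschitz and injective but not bi-Lipschitz (its Jacobian degenerates as $\langle n_K,\theta\rangle\to 0$), which is all the area formula requires; and your limiting step really only uses continuity and boundedness of $\phi$ near $\partial K$ rather than the full Lipschitz hypothesis --- but weakening it further to the Lebesgue-point assumption of Theorem~\ref{t:variationalformula} would need genuinely more work, since your inner averages are one-dimensional while Lebesgue points are defined through solid balls, and that extra generality is exactly what the paper's Wulff-shape/variational-formula route buys. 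Finally, your reading of \eqref{e:deriv_g_mu_covario} as a right derivative at $r=0^+$ is the intended one (the two-sided derivative generally does not exist, the left derivative being $\int_{\partial K}\langle\theta,n_K(y)\rangle_+\phi(y)\,d\mathcal{H}^{n-1}(y)$), consistent with how the polarized analogue \eqref{e:deriv_g_mu_covario_polar} is stated.
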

We now briefly show that the assumption of Lipschitz density can be dropped. For a continuous function $h: \s^{n - 1} \to (0, \infty)$, the Wulff shape or Alexandrov body of $h$ is defined as 
$$[h] = \bigcap_{u \in S^{n - 1}} \{x \in \mathbb R^n: \langle x, u\rangle \le h(u)\}.$$
In \cite{KL23}, the first-named author and Kryvonos established the following formula, generalizing the volume case and extending the partial case found in \cite{GAL19}.

\begin{lemma}[Aleksandrov's variational formula for arbitrary measures, \cite{KL23}]
	\label{l:second}
	Let $\mu$ be a Borel measure on $\R^n$ with locally integrable density $\phi$. Let $K$ be a convex body such that $\partial K$, up to set of $(n-1)$-dimensional Hausdorff measure zero, is in the Lebesgue set of $\phi$. Then, for a continuous function $f$ on $\s^{n-1}$, one has that
	$$\lim_{t\rightarrow 0}\frac{\mu([h_K+tf])-\mu(K)}{t}=\int_{\partial K}f(n_K(y))d\mathcal{H}^{n-1}(y).$$
	\end{lemma}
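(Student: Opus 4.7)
The plan is to first establish the formula for continuous densities via a direct tubular-neighborhood argument, then extend to general locally integrable $\phi$ by mollification. Throughout, by replacing $f$ with $-f$ it suffices to treat $t \to 0^+$. (I will also read the right-hand side of the identity as $\int_{\partial K}f(n_K(y))\,\phi(y)\,d\mathcal{H}^{n-1}(y)$, since in the stated form the weight is missing; this matches the $\mu^+(\partial K)$ convention set out in \eqref{eq_bd}.)

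First I would use the Gauss map $n_K:\partial K\setminus N_K\to\s^{n-1}$, well-defined off an $\mathcal{H}^{n-1}$-null set $N_K$, to parameterize a tubular neighborhood of $\partial K$ via $\Psi(y,s)=y+s\,n_K(y)$. For sufficiently small $\varepsilon>0$, $\Psi$ is injective on $(\partial K\setminus N_K)\times(-\varepsilon,\varepsilon)$, and the pushforward of $\mathcal{H}^{n-1}\otimes ds$ has Jacobian $J(y,s)$ satisfying $J(y,0)=1$. For small $|t|$, the Wulff shape $K_t:=[h_K+tf]$ differs from $K$ only within this tubular neighborhood, and for $\mathcal{H}^{n-1}$-a.e.\ $y\in\partial K$ with $n_K(y)=u$, the Wulff construction shifts the boundary along the normal by $tf(u)+o(t)$. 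When $\phi$ is continuous near $K$, this converts the measure difference into
$$\mu(K_t)-\mu(K)=\int_{\partial K}\int_{0}^{tf(n_K(y))}\phi(y+s\,n_K(y))\,J(y,s)\,ds\,d\mathcal{H}^{n-1}(y)+o(t),$$
after which dividing by $t$, using continuity of $\phi$ plus compactness of $\partial K$, and dominated convergence gives the claimed formula with weight $\phi(y)$.

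For arbitrary locally integrable $\phi$ satisfying the Lebesgue-point hypothesis, I would let $\phi_k=\phi*\eta_{1/k}$ be smooth mollifications, apply the continuous case to $d\mu_k=\phi_k\,d\lambda_n$, and pass to $k\to\infty$. The boundary integral converges since $\phi_k(y)\to\phi(y)$ at every Lebesgue point, and $\mathcal{H}^{n-1}$-a.e.\ $y\in\partial K$ is such a point by hypothesis, with dominated convergence coming from a maximal-function bound on $\phi$. The main obstacle is the interchange of the $k$-limit and the $t$-limit on the left-hand side: one needs a second-order remainder estimate $\mu_k(K_t)-\mu_k(K)=t\int_{\partial K}f(n_K(y))\phi_k(y)\,d\mathcal{H}^{n-1}(y)+\varepsilon_k(t)$ with $\varepsilon_k(t)/t\to 0$ uniformly in $k$. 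Since $\|\phi_k\|_\infty$ is not uniformly controlled, I would express $\varepsilon_k(t)$ via $L^1$-type quantities over the $O(t)$-thick shell around $\partial K$ and exploit $L^1_{\mathrm{loc}}$ convergence $\phi_k\to\phi$ together with the Hardy--Littlewood maximal inequality. An attractive alternative that bypasses mollification is to work directly with $\phi$: use $\Psi$ to rewrite $\mu(K_t)-\mu(K)$ as a volume integral over the shell, apply Fubini along normal lines, and invoke the Lebesgue-point hypothesis to show that on a set of full $\mathcal{H}^{n-1}$-measure in $\partial K$ one has $\frac{1}{t}\int_{0}^{tf(n_K(y))}\phi(y+s\,n_K(y))\,ds\to f(n_K(y))\phi(y)$; a dominated-convergence argument with a maximal-function majorant in $L^1(\mathcal{H}^{n-1}\!\restriction\!\partial K)$ then yields the formula in one shot.
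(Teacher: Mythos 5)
First, a point of comparison: the paper does not prove this lemma at all --- it is imported verbatim from \cite{KL23}, so there is no internal argument to measure your sketch against; what follows is an assessment of the sketch on its own terms. You were right to flag the missing weight: as used in the paper (to derive \eqref{e:deriv_g_mu_covario_2} from \eqref{eq:mu_weighted_bodies}), the right-hand side must be $\int_{\partial K} f(n_K(y))\phi(y)\,d\mathcal{H}^{n-1}(y)$, so the printed statement has a typo and your reading is the correct one.

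There are, however, genuine gaps. A smaller one: the map $\Psi(y,s)=y+s\,n_K(y)$ is injective for $s\in(0,\varepsilon)$ (unique nearest-point projection outside a convex body), but your claimed injectivity on $(\partial K\setminus N_K)\times(-\varepsilon,\varepsilon)$ is false for general convex bodies --- near a vertex of a polytope, inward normal segments emanating from the two adjacent facets cross at distance comparable to the distance to the vertex, so no uniform $\varepsilon$ exists; the inward shell needs a separate treatment (medial-axis/null-set argument or smooth approximation). The decisive gap is the passage from continuous to merely locally integrable $\phi$, which is exactly the content of the lemma. In your ``direct'' route, the claim that for $\mathcal{H}^{n-1}$-a.e.\ $y$ one has $\frac{1}{t}\int_0^{tf(n_K(y))}\phi(y+s\,n_K(y))\,ds\to f(n_K(y))\phi(y)$ does not follow from $y$ being a Lebesgue point of $\phi$: the Lebesgue-point condition controls averages over solid $n$-dimensional balls $B(y,r)$, whereas a normal segment (or a shell of thickness $t$) occupies a vanishing proportion, of order $t/r$, of such a ball, so its average is not controlled; moreover the one-dimensional maximal function along normal lines is not dominated by the $n$-dimensional Hardy--Littlewood maximal function at $y$, so the proposed integrable majorant for dominated convergence is not available. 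In the mollification route you candidly identify the needed uniform-in-$k$ remainder estimate $\varepsilon_k(t)/t\to 0$, but this is precisely the hard point and is only gestured at ($\|\phi_k\|_\infty$ is indeed uncontrolled, and $L^1_{\rm loc}$ convergence alone does not give a rate on $O(t)$-thin shells). In short, the argument is complete only in the continuous-density case (which was already known, cf.\ \cite{GAL19}), and the measure-theoretic mechanism by which the Lebesgue-set hypothesis is exploited --- the actual contribution of \cite{KL23} --- is missing.
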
	

Next, note that for any $\theta \in \mathbb R^n$, $h_{K + r\theta}(u) = h_K(u) + r\langle u, \theta\rangle$. Also, for any convex body $L$ we have $L = \bigcap_{u \in \s^{n - 1}} \{x: \langle u, x\rangle \le h_L(u)\}$. Consequently,
\begin{align*}
K \cap (K + r \theta) &= \bigcap_{u \in \s^{n - 1}} \{x: \langle u, x\rangle \le h_K(u)\} \cap \bigcap_{u \in \s^{n - 1}} \{x: \langle u, x\rangle \le h_{K + r\theta }(u)\} \\
&= \bigcap_{u \in \s^{n - 1}} (\{x: \langle u, x\rangle \le h_{K}(u)\} \cap \{x: \langle u, x\rangle \le h_{K + r\theta}(u)\}) \\
&= \bigcap_{u \in \s^{n - 1}} \{x:  \langle u, x\rangle \le \min (h_{K}(u), h_K(u) + r\langle \theta, u\rangle)\} \\
&= \bigcap_{u \in \s^{n - 1}} \{x:  \langle u, x\rangle \le h_K(u) + r\min(0, \langle u, \theta\rangle)\}.
\end{align*}
Thus, the body $K_r(\theta) = K \cap (K + r \theta)$ is the Wulff shape of the function $u\mapsto h_K(u) - r \langle u, \theta\rangle_-$. Suppose we have a Borel measure $\mu$ with density $\phi$, such that $\partial K$ is in the Lebesgue set of $\phi$. Then, observe that $g_{\mu,K}(r\theta)=\mu(K_r(\theta))$. Therefore, we obtain \eqref{e:deriv_g_mu_covario} from Lemma~\ref{l:second}, with $f(u) = - \langle u, \theta\rangle_-$, and \eqref{eq:mu_weighted_bodies}. We list this strengthened version as a separate result.
 
\begin{theorem} \label{t:variationalformula}
Let $K$ be a convex body in $\R^n$ and $\mu$ a Borel measure whose density $\phi \colon \R^n \to \R^+$ contains $\partial K$ in its Lebesgue set and $K$ in its support. Then, for every fixed direction $\theta \in \s^{n-1}$, one has 
\begin{equation}\label{e:deriv_g_mu_covario_2}
    \diff{g_{\mu,K}(r\theta)}{r}\bigg|_{r=0}=-h_{\Pi_\mu K}(\theta).
    \end{equation}
\end{theorem}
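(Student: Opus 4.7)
The plan is to assemble the ingredients developed in the paragraph immediately preceding the statement. The key observation is that the body $K_r(\theta):=K\cap(K+r\theta)$ admits a clean Wulff-shape description, which reduces the radial derivative of $g_{\mu,K}$ at the origin to a direct application of Aleksandrov's variational formula for arbitrary measures (Lemma~\ref{l:second}).

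First, from the definition \eqref{covario}, $g_{\mu,K}(r\theta)=\mu(K_r(\theta))$. Second, I would verify the identity $K_r(\theta)=[h_K-r\langle\cdot,\theta\rangle_-]$ by writing $K$ and $K+r\theta$ as intersections of supporting half-spaces indexed by $u\in\s^{n-1}$ and invoking the elementary equality $\min(h_K(u),\,h_K(u)+r\langle u,\theta\rangle)=h_K(u)-r\langle u,\theta\rangle_-$; this is precisely the chain of equalities already laid out in the excerpt.

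Third, I would apply Lemma~\ref{l:second} with the continuous function $f(u)=-\langle u,\theta\rangle_-$ on $\s^{n-1}$. The hypothesis of Theorem~\ref{t:variationalformula} that $\partial K$ lies in the Lebesgue set of $\phi$ (off an $\mathcal{H}^{n-1}$-null set) is exactly what Lemma~\ref{l:second} requires, and it yields
\[ \diff{g_{\mu,K}(r\theta)}{r}\bigg|_{r=0}\;=\;-\int_{\partial K}\langle n_K(y),\theta\rangle_-\,\phi(y)\,d\mathcal{H}^{n-1}(y). \]
By the last line of \eqref{eq:mu_weighted_bodies}, the right-hand side equals $-h_{\Pi_\mu K}(\theta)$, finishing the proof.

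The heavy lifting --- both the Wulff-shape identification and the variational formula for general Borel measures --- has already been done, so there is no substantive obstacle. The only small verification is that $u\mapsto-\langle u,\theta\rangle_-$ is continuous on $\s^{n-1}$, which is immediate since $t\mapsto t_-$ is continuous on $\R$. The content of Theorem~\ref{t:variationalformula} relative to Proposition~\ref{p:deriv_g_mu_covario} is therefore the replacement of local Lipschitz regularity of $\phi$ near $K$ by the much weaker Lebesgue-set condition, inherited directly from Lemma~\ref{l:second}.
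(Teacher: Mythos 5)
Your proposal is correct and follows the same route as the paper: identify $K\cap(K+r\theta)$ as the Wulff shape of $h_K - r\langle\cdot,\theta\rangle_-$, apply Lemma~\ref{l:second} with $f(u)=-\langle u,\theta\rangle_-$, and read off $-h_{\Pi_\mu K}(\theta)$ from \eqref{eq:mu_weighted_bodies}. Nothing further is needed.
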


\noindent From the Brunn-Minkowski inequality, $g_K$ is a $1/n$-concave function supported on $DK$. One can readily check that the $\mu$-covariogram inherits the concavity of the measure $\mu$ in general.
\begin{proposition}[Concavity of the covariogram]
\label{p:covario_concave}
Consider a class of convex bodies $\mathcal{C}\subseteq\conbod$ with the property that $K\in \mathcal{C} \rightarrow K\cap(K+x)\in\mathcal{C}$ for every $x\in DK$. Let $\mu$ be a Borel measure finite on every $K\in\mathcal{C}.$ Suppose $F$ is a continuous and invertible function such that $\mu$ is $F$-concave on $\mathcal{C}$. Then, for $K\in\mathcal{C},$ $g_{\mu,K}$ is also $F$-concave, in the sense that, if $F$ is increasing, then $F\circ g_{\mu,K}$ is concave, and if $F$ is decreasing, then $F\circ g_{\mu,K}$ is convex.
\end{proposition}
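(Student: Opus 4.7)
The plan is to reduce the concavity of the covariogram to the $F$-concavity of $\mu$ via a standard Minkowski inclusion. Specifically, I claim the key set-theoretic identity
\[
(1-\lambda)\bigl(K\cap (K+x)\bigr) + \lambda\bigl(K\cap (K+y)\bigr) \subseteq K\cap\bigl(K+((1-\lambda)x+\lambda y)\bigr)
\]
for every $x,y\in DK$ and $\lambda\in[0,1]$. This is immediate: if $a\in K\cap(K+x)$ and $b\in K\cap(K+y)$, then convexity of $K$ gives $(1-\lambda)a+\lambda b\in K$, while writing $(1-\lambda)a+\lambda b=\bigl((1-\lambda)(a-x)+\lambda(b-y)\bigr)+\bigl((1-\lambda)x+\lambda y\bigr)$ and using $a-x,\,b-y\in K$ shows that this point also lies in $K+((1-\lambda)x+\lambda y)$.

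Next, by monotonicity of $\mu$ applied to this inclusion, followed by the $F$-concavity hypothesis applied to the convex bodies $A=K\cap(K+x)$ and $B=K\cap(K+y)$ (which belong to $\mathcal{C}$ by the stability assumption on $\mathcal{C}$), we obtain
\[
\mu\bigl(K\cap(K+(1-\lambda)x+\lambda y)\bigr) \;\geq\; \mu\bigl((1-\lambda)A+\lambda B\bigr) \;\geq\; F^{-1}\bigl((1-\lambda)F(\mu(A))+\lambda F(\mu(B))\bigr).
\]
Rewriting in terms of $g_{\mu,K}$ gives
\[
g_{\mu,K}((1-\lambda)x+\lambda y) \;\geq\; F^{-1}\bigl((1-\lambda)F(g_{\mu,K}(x))+\lambda F(g_{\mu,K}(y))\bigr).
\]

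To conclude, I apply $F$ to both sides and split into cases. If $F$ is increasing, applying $F$ preserves the inequality, giving
\[
F\circ g_{\mu,K}((1-\lambda)x+\lambda y) \geq (1-\lambda)\,F\circ g_{\mu,K}(x)+\lambda\,F\circ g_{\mu,K}(y),
\]
i.e.\ $F\circ g_{\mu,K}$ is concave. If $F$ is decreasing, applying $F$ reverses the inequality, giving
\[
F\circ g_{\mu,K}((1-\lambda)x+\lambda y) \leq (1-\lambda)\,F\circ g_{\mu,K}(x)+\lambda\,F\circ g_{\mu,K}(y),
\]
so $F\circ g_{\mu,K}$ is convex. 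This completes the proof. There is no real obstacle here; the only subtle point is verifying that the assumption $K\cap(K+x)\in\mathcal{C}$ ensures the $F$-concavity hypothesis of $\mu$ is applicable to these particular sets, which is exactly what the stability condition on $\mathcal{C}$ in the hypothesis guarantees.
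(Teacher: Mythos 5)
Your proposal is correct and follows essentially the same route as the paper: the key set inclusion $(1-\lambda)\bigl(K\cap(K+x)\bigr)+\lambda\bigl(K\cap(K+y)\bigr)\subseteq K\cap\bigl(K+(1-\lambda)x+\lambda y\bigr)$, then monotonicity of $\mu$ and the $F$-concavity hypothesis applied to the sets $K\cap(K+x),\,K\cap(K+y)\in\mathcal{C}$. Your explicit element-wise verification of the inclusion and the split into the increasing/decreasing cases of $F$ are just slightly more detailed renderings of the steps the paper leaves implicit.
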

\begin{proof}
We first observe the following set inclusion:
for $x, y \in \R^{n}$ and $\lambda \in [0,1]$, we have from convexity that
$$
\begin{aligned}
K \cap(K+(1-\lambda) x+\lambda y) &=K \cap((1-\lambda)(K+x)+\lambda(K+y)) \\
& \supset(1-\lambda)(K \cap(K+x))+\lambda(K \cap(K+y)).
\end{aligned}
$$
Using this set inclusion, we obtain that
$$g_{\mu,K}((1-\lambda) x+\lambda y) \geq \mu((1-\lambda)(K \cap(K+x))+\lambda(K \cap(K+y))).$$
From the fact that $\mu$ is $F$-concave, we obtain
$$
\begin{aligned}
g_{\mu,K}((1-\lambda) x+\lambda y) & \geq F^{-1}\left((1-\lambda) F\left(\mu(K \cap(K+x))\right)+\lambda F\left(\mu(K \cap(K+y))\right)\right) \\
&=F^{-1}\left((1-\lambda) F(g_{\mu,K}(x))+\lambda F(g_{\mu,K}(y))\right).
\end{aligned}
$$
\end{proof}

We see that the ($\mu$-)covariogram connects the ($\mu$-)projection body and difference body of $K$. The covariogram is also related to the radial mean bodies, since, for $p>0$:
\begin{align*}
\int_{K} \rho_{K}(x, \theta)^{p} d \mu(x) &= p\int_K\int_0^{\rho_K(x,\theta)}r^{p-1}dr\,d\mu(x)
\\
&=p \int_{0}^{\rho_{D K}(\theta)}\left(\int_{K \cap(K+r \theta)} d \mu(x)\right) r^{p-1} d r 
\\
&=p \int_{0}^{\rho_{D K}(\theta)} g_{\mu,K}(r \theta) r^{p-1} d r=p\Mel{g_{\mu,K}(r\theta)}{p},
\end{align*}
where, in the second step, we used Fubini's theorem and the fact that $x\in K$ and $-r\theta\in K-x$ for all $0\leq r\leq \rho_K(x,\theta)$. Therefore, we can write, for $p>0$, that
\begin{equation}
\label{eq:equiv}
    \rho_{R_{p,\mu}K}(\theta) \!=\! \left(\frac{p}{\mu(K)}\int_{0}^{\rho_{D K}(\theta)} \!g_{\mu,K}(r \theta) r^{p-1} d r\right)^\frac{1}{p} \!=\! \left(\frac{p}{\mu(K)}\right)^{\frac{1}{p}}\Mel{g_{\mu,K}(r\theta)}{p}^{\frac{1}{p}}.
\end{equation}
Additionally, this formulation implies that $R_{p,\mu}K$ is a convex body if $\mu$ is $s$-concave in the Borell sense for some $s \ge 0$ (see \cite[Theorem 5]{Ball88} for $p\geq 1$ and \cite[Corollary 4.2]{GZ98}). 

\subsection{The Negative Regime for p}
We first convince the reader that $R_{p,\mu} K$ exists for 
 $p\in(-1,0)$ when $\mu$ has a bounded, positive density $\phi$. Under these assumptions, let $M=\min_{x\in K}\phi(x).$ Then, for $p\in (-1,0)\cup(0,\infty):$
\begin{align*}\frac{M}{\|\phi\|_\infty}\frac{1}{\vol_n(K)}\int_K \rho_K(x,\theta)^p dx&\leq \frac{1}{\mu(K)}\int_K \rho_K(x,\theta)^p d\mu(x)
\\
&\leq \frac{\|\phi\|_\infty}{M}\frac{1}{\vol_n(K)}\int_K \rho_K(x,\theta)^p dx. \end{align*}
One then deduces that under these constraints, for $p>0,$ $\left(\frac{M}{\|\phi\|_\infty}\right)^{\frac{1}{p}}R_p K\subseteq R_{p,\mu} K \subseteq \left(\frac{\|\phi\|_\infty}{M}\right)^{\frac{1}{p}}R_p K,$ and, for $p\in (-1,0),$ one has $\left(\frac{M}{\|\phi\|_\infty}\right)^{\frac{1}{p}}R_p K\supseteq R_{p,\mu} K \supseteq \left(\frac{\|\phi\|_\infty}{M}\right)^{\frac{1}{p}}R_p K$. There is equality if, and only if, $\phi$ is constant on $K.$ Notice these inclusions show that $R_{p,\mu} K$ is well-defined for $p\in (-1,0)$. By sending $p\to -1$ we deduce that $R_{p,\mu}K\to \{0\}$ as $p\to -1.$

For a general Borel measure $\mu$ with density, we now obtain a formula for $R_{p,\mu} K$ when $p\in (-1,0).$ This also establishes existence. Notice that, in this instance,
\begin{align*}
\int_{K} \rho_{K}&(x, \theta)^{p} d \mu(x) =-p \int_K \int_{\rho_{K}(x, \theta)}^\infty  r^{p-1}drd\mu(x)
\\
&=-p\int_{0}^{\rho_{DK}(\theta)}\left(\int_{K\setminus {K\cap (K+r\theta)}}d\mu(x)\right)r^{p-1}dr - p\int_K\int_{\rho_{DK}(\theta)}^\infty r^{p-1}drd\mu(x).
\end{align*}
Adding and subtracting integration over $K\cap(K+r\theta),$ we obtain
\begin{align*}
\int_{K} \rho_{K}(x, \theta)^{p} d \mu(x) &=p\int_{0}^{\rho_{DK}(\theta)}(g_{\mu,K}(r\theta)-\mu(K))r^{p-1}dr+\rho^p_{DK}(\theta)\mu(K)
\\
&=p\Mel{g_{\mu,K}(r\theta)}{p}.
\end{align*}
Notice this formulation could have been established directly via the continuity of the Mellin transform. Hence, we can write, for $p\in (-1,0),$ that
\begin{equation}
\label{eq:equiv_neq}
\begin{split}
    \rho_{R_{p,\mu}K}(\theta)&=\left(\frac{p}{\mu(K)}\int_{0}^{\rho_{D K}(\theta)} (g_{\mu,K}(r\theta)-\mu(K)) r^{p-1} d r +\rho^p_{DK}(\theta)\right)^\frac{1}{p}
    \\
    &=\left(\frac{p}{\mu(K)}\right)^{\frac{1}{p}}\Mel{g_{\mu,K}(r\theta)}{p}^{\frac{1}{p}}.
    \end{split}
\end{equation}
The last equality is to emphasis that \eqref{eq:equiv_neq} is the analytic continuation of \eqref{eq:equiv}, as discussed in Section~\ref{sec:ber}.
Now that we have shown the existence of $R_{p,\mu} K$, we can use properties of $p$th averages of functions, i.e. Jensen's inequality, to immediately obtain the following.
\begin{theorem}
\label{t:jensen_set}
Let $\mu$ be a Borel measure finite on a convex body $K$ contained in its support. Then one has that, for $-1 < p \le q \le \infty,$
$$R_{p,\mu} K \subseteq R_{q,\mu} K \subseteq R_{\infty,\mu} K=D K.$$
\end{theorem}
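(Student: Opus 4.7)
The plan is to reduce the claim to a pointwise statement in $\theta \in \s^{n-1}$ and then invoke the monotonicity of $p$th means (Jensen's inequality), stated in the paper just above the definition of $R_{p,\mu} K$. Observe that, for fixed $\theta \in \s^{n-1}$, the function $x \mapsto \rho_K(x,\theta)$ is non-negative and bounded above by $\rho_{DK}(\theta) < \infty$ on $K$ (since $K$ is compact), so it lies in $L^\infty(\mu_K)$ and, a fortiori, in $L^q(\mu_K)$ for every $q \in (0,\infty]$; existence in $L^p(\mu_K)$ for $p \in (-1,0)$ was established in the previous subsection through the Mellin-transform formulation~\eqref{eq:equiv_neq}. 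Consequently, by the definition~\eqref{pth_mean} of the $p$th mean,
$$\rho_{R_{p,\mu}K}(\theta) = M_{p,\mu_K}\!\bigl(\rho_K(\cdot,\theta)\bigr).$$

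For the first inclusion, I would apply Jensen's inequality for $p$th means: for any non-negative $f \in L^p(\mu_K) \cap L^q(\mu_K)$ and $-1 < p \le q \le \infty$, one has $M_{p,\mu_K}(f) \le M_{q,\mu_K}(f)$. Applied with $f = \rho_K(\cdot,\theta)$ this yields $\rho_{R_{p,\mu}K}(\theta) \le \rho_{R_{q,\mu}K}(\theta)$ for every $\theta \in \s^{n-1}$, and hence the set inclusion $R_{p,\mu}K \subseteq R_{q,\mu}K$ (since both are star bodies).

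For the equality $R_{\infty,\mu}K = DK$, recall that $\lim_{p \to \infty} M_{p,\mu_K}(f) = \operatorname{ess\,sup}_{x \in K} f(x)$, and that since $K \subset \supp\mu$ and $x \mapsto \rho_K(x,\theta)$ is continuous on $K$, this essential supremum coincides with $\max_{x \in K} \rho_K(x,\theta)$. Unwinding the definition,
$$\max_{x \in K} \rho_K(x,\theta) = \sup\{\lambda \ge 0 : \exists\, x \in K,\ x - \lambda\theta \in K\} = \sup\{\lambda \ge 0 : \lambda\theta \in K + (-K)\} = \rho_{DK}(\theta),$$
which is the desired identification.

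There is no genuine obstacle here: the only subtlety is ensuring that $\rho_K(\cdot,\theta)^p$ is $\mu$-integrable for $p \in (-1,0)$, but this has already been addressed in the discussion leading to formula~\eqref{eq:equiv_neq}. The result is thus a direct corollary of the pointwise Jensen inequality for $p$th means combined with the geometric identification of the maximum of $\rho_K(\cdot,\theta)$ with $\rho_{DK}(\theta)$.
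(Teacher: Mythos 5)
Your proof is correct and follows essentially the same route as the paper, which obtains the theorem directly from Jensen's inequality for $p$th means (after the preceding discussion establishing existence of $R_{p,\mu}K$ for $p\in(-1,0)$) together with the identification $\max_{x\in K}\rho_K(x,\theta)=\rho_{DK}(\theta)$ already noted in the definition of $R_{\infty,\mu}K$. The only cosmetic remark is that equating the essential supremum with the maximum is cleaner via concavity of $x\mapsto\rho_K(x,\theta)$ on $K$ plus $K\subset\supp\mu$ (continuity up to the boundary is not needed), but this does not affect the validity of the argument.
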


We now take a moment to discuss how $p\in (-1,0)$ was originally handled in the volume case. Gardner and Zhang defined another family of star bodies depending on $K\in\conbod$, the \textit{spectral pth mean bodies} of $K,$ denoted $S_p K.$ However, to apply Jensen's inequality, they had to assume additionally that $\vol_n(K)=1.$ To avoid this assumption, we change the normalization and define $S_p K$ as the star body whose radial function is given by, for $p\in [-1,\infty),$
$$\rho_{S_p K}(\theta)=\left(\int_{P_{\theta^\perp}K} X_\theta K(y)^{p}\left(\frac{X_\theta K(y)dy}{\vol_n(K)}\right)\right)^{1/p},$$
where $X_\theta K(y)=\vol_1(K \cap (y+\theta \R))$ is the \textit{X-ray} of $K$ in the direction $\theta\in\s^{n-1}$ for $y\in P_{\theta^\perp} K$ (see \cite[Chapter 1]{gardner_book} for more on the properties of $X_\theta K$, and note that $\int_{P_{\theta^\perp}K}\frac{X_\theta K(y)dy}{\vol_n(K)}=1$), $\rho_{S_{\infty} K}(\theta) = \max_{y\in \theta^\perp}X_\theta K(y)=\rho_{DK}(\theta),$
$\rho_{S_0 K}(\theta)=\exp\left(\int_{P_{\theta^\perp}K}\log(X_\theta K(y))\frac{X_\theta K(y)dy}{\vol_n(K)}\right),$
and $$\rho_{S_{-1}K}(\theta)=\vol_n(K)\vol_{n-1}(P_{\theta^\perp}K)^{-1}=\vol_n(K)\rho_{\Pi^\circ K}(\theta).$$
Here, $\Pi^\circ K \equiv (\Pi K)^\circ$ is the polar projection body of $K$.

Therefore, from Jensen's inequality, we obtain, for $-1\leq p \leq q \leq \infty,$
\begin{equation}\vol_n(K)\Pi^\circ K=S_{-1}K\subseteq S_p K \subseteq S_q K \subseteq S_{\infty}K=DK.\label{eq:spectral}\end{equation}
The fact that, for $p>-1,$
\begin{equation}\frac{1}{p+1}\int_{P_{\theta^\perp} K}X_\theta K(y)^{p+1}dy=\int_{P_{\theta^\perp} K}\int_0^{X_\theta K(y)}r^pdrdy=\int_K \rho_K(x,\theta)^p dx
\label{eq:radial_spectal_neg}
\end{equation}
shows $S_0 K = e R_0 K$, $S_p K = (p+1)^{1/p}R_p K, \, p>0, $ and that we can analytically continue $R_p K$ to $p\in (-1,0)$ by $R_p K:=(p+1)^{-1/p}S_p K.$ As observed in \cite{GZ98}, the relation $R_p K=(p+1)^{-1/p}S_p K$ shows that $R_{p}K\to \{0\}$ as $p\to -1,$ but the shape of $R_{p} K$ tends to that of $S_{-1} K=\vol_n(K)\Pi^\circ K$ (note that due to the alternate normalization of $S_p K,$ these relations are expressed differently in \cite[Theorem 2.2]{GZ98}; in both instances, it is unknown if $R_p K$ and $S_p K$ are convex for $p\in (-1,0)$).

Gardner and Zhang then obtained a chain of inequalities concerning $R_p K$ (\cite[Theorem 5.5]{GZ98}), a reverse to the one from Jensen's inequality: for $-1<p\leq q < \infty$ 
\begin{equation}
    \label{e:set_inclusion}
    D K \subseteq c_{n, q} R_{q} K \subseteq c_{n, p} R_{p} K \subseteq n \vol_n(K) \Pi^{\circ} K,
\end{equation}
where $c_{n, p}$ are constants defined via continuity at $p=0$, and, for $p\in (-1,0)\cup (0,\infty).$ $
c_{n, p}=(n B(p+1, n))^{-1 / p},
$ with $B(x,y)$ the standard Beta function. There is equality in each inclusion in \eqref{e:set_inclusion} if, and only if, $K$ is a $n$-dimensional simplex. One obtains from \eqref{eq:radial_spectal_neg} that, indeed, $c_{n,p} R_p K$ tends to $n\vol_n(K)\Pi^\circ K$ as $p \to -1^+$, since $c_{n,p}(p+1)^{-\frac{1}{p}}$ tends to $n$. When $p=n$, one obtains, since $\vol_n(R_nK)=\vol_n(K)$, the following special case:
\begin{equation}\label{e:roger_shep_Zhang}
	\frac{\vol_n(DK)}{\vol_n(K)}\leq{2n \choose n} \leq n^n\vol_n(K)^{n-1}\vol_n(\Pi^\circ K).
	\end{equation}
\noindent The left-hand side is the inequality of \textit{Rogers-Shephard inequality} \cite{RS57}, and the right-hand side is \textit{Zhang's inequality} \cite{Zhang91}.

 The goal of the next subsection is to generalize \eqref{e:set_inclusion}. To determine the behaviour of $R_{p,\mu} K$ as $p\to -1^+$, we will not pass through spectral mean bodies. To explain why, we shall, for simplicity, focus on the Gaussian measure and a symmetric $K\in\conbod_0$. Suppose we defined Gaussian spectral mean bodies $S_{p,\gamma_n}K$ as the star body whose radial function is given by, for $p\in [-1,\infty),$
$$\rho_{S_{p,\gamma_n} K}(\theta)=\left(\int_{P_{\theta^\perp}K} \gamma_1(K\cap(y+\theta \R))^{p+1}\left(\frac{d\gamma_{n-1}(y)}{\gamma_n(K)}\right)\right)^{1/p}.$$
Notice that an analogue of \eqref{eq:radial_spectal_neg}, which relates the radial functions of $R_{p}K$ and $S_{p}K$ when $p>-1,$ does not hold. Consequently, we cannot determine the shape of $R_{p,\gamma_n} K$ as $p\to -1$ via $S_{p,\gamma_n} K$. Perhaps then, the focus should be on $S_{p,\gamma_n}K$ and not $R_{p,\gamma_n}K$. But notice that $$\rho_{S_{-1,\gamma_n}(K)}(\theta)=\gamma_n(K)\gamma_{n-1}(P_{\theta^\perp} K)^{-1} \neq \gamma_n(K) \rho_{\Pi^\circ_{\gamma_n} K}(\theta)$$  since one does not have an equivalent of Minkowski's integral formula in the weighted case. Furthermore, it is not necessarily true that $\gamma_{n-1}(P_{\theta^\perp} K)$ is convex as a function of $\theta.$ Hence, it is not necessarily the Minkowski functional of a convex body. Additionally, $\rho_{S_{\infty,\gamma_n} K}(\theta)= \max_{y\in \theta^\perp} \gamma_1(K\cap (y+\theta\R))\neq \rho_{DK}(\theta).$ To summarize, $S_{p,\gamma_n}K$ is not related to $DK$ or $\Pi_{\gamma_n}^\circ K,$ and $R_{p,\gamma_n}K$ is not related to $S_{p,\gamma_n}K.$ It is for these reasons we do not study weighted spectral mean bodies.

We must determine the shape of $R_{p,\mu} K$ as $p\to -1.$ Applying integration by parts to both \eqref{eq:equiv} and \eqref{eq:equiv_neq}, we obtain that, for all $p\in (-1,0)\cup (0,\infty),$ one has
\begin{equation}
    \rho_{R_{p,\mu} K}(\theta)^p=\int_0^{\rho_{DK}(\theta)}\left(\frac{-g_{\mu,K}(r\theta)^\prime}{\mu(K)}\right)r^p dr,
    \label{eq:equiv_new_formula}
\end{equation}
where we used Lebesgue's theorem to obtain that $g_{\mu,K}(r\theta)$ is differentiable almost everywhere on $[0,\rho_{DK}(\theta)],$ as it is monotonically decreasing in the variable $r.$
Taking the limit as $p\to -1,$ we see that $R_{p,\mu} K \to \{o\}.$

On the other-hand, recall the following lemma.
\begin{lemma}[Lemma 4 in \cite{HL22} / Lemma 8 in \cite{HL24}] 
\label{l:fractional_deriv}
If $\varphi:[0, \infty) \rightarrow[0, \infty)$ is a measurable function with $\lim _{t \rightarrow 0^{+}} \varphi(t)=$ $\varphi(0)$ and such that $\int_0^{\infty} t^{-s_0} \varphi(t) \mathrm{d} t<\infty$ for some $s_0 \in(0,1)$, then
$$
\lim _{s \rightarrow 1^{-}}(1-s) \int_0^{\infty} t^{-s} \varphi(t) \mathrm{d} t=\varphi(0) .
$$
\end{lemma}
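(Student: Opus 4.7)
The plan is to split the integral at $t=1$ and analyze the two pieces separately. For the tail $\int_1^\infty$, I would exploit the weighted integrability hypothesis directly: for $s\in(s_0,1)$ and $t\ge 1$ we have $t^{-s}\le t^{-s_0}$, so
$$(1-s)\int_1^{\infty} t^{-s}\varphi(t)\,dt \le (1-s)\int_0^{\infty}t^{-s_0}\varphi(t)\,dt\longrightarrow 0$$
as $s\to 1^-$, since the second factor is a finite constant independent of $s$.

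For the piece on $[0,1]$, which carries all of the singular behavior, I would add and subtract $\varphi(0)$ to isolate the main contribution. The explicit computation $(1-s)\int_0^1 t^{-s}\varphi(0)\,dt = \varphi(0)$ picks out the desired limit and reduces the problem to proving that
$$E(s):=(1-s)\int_0^1 t^{-s}\bigl(\varphi(t)-\varphi(0)\bigr)\,dt\longrightarrow 0.$$
Given $\varepsilon>0$, I would use the right-continuity of $\varphi$ at $0$ to choose $\delta\in(0,1]$ with $|\varphi(t)-\varphi(0)|<\varepsilon$ for $t\in(0,\delta]$ and split $E(s)$ at $\delta$. On $[0,\delta]$ the integral is bounded by $\varepsilon(1-s)\int_0^\delta t^{-s}\,dt = \varepsilon\,\delta^{1-s}\le\varepsilon$, uniformly in $s\in(0,1)$. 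On $[\delta,1]$ the weight $t^{-s}$ is bounded by $\delta^{-1}$, so the remaining integral is controlled by
$$\delta^{-1}(1-s)\int_\delta^1 \bigl(\varphi(t)+\varphi(0)\bigr)\,dt,$$
which tends to $0$ as $s\to 1^-$ with $\delta$ fixed, using that $\int_\delta^1 \varphi(t)\,dt\le\int_\delta^1 t^{-s_0}\varphi(t)\,dt<\infty$ (since $t^{-s_0}\ge 1$ on $[\delta,1]$ when $s_0>0$).

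The main subtlety is organizing the order of limits: for each fixed $\varepsilon$ (equivalently fixed $\delta$) one must first send $s\to 1^-$ to kill the $(1-s)$ factor in the $[\delta,1]$ piece, and only afterward let $\varepsilon\to 0$. Combining the three estimates yields $\limsup_{s\to 1^-}\bigl|(1-s)\int_0^\infty t^{-s}\varphi(t)\,dt-\varphi(0)\bigr|\le\varepsilon$ for every $\varepsilon>0$, which is the claim. There is no deep obstacle: the weighted integrability enters only in the tail and in the trivial observation $\varphi\in L^1([\delta,1])$, and the right-continuity at $0$ enters only through the uniform $\varepsilon$-bound on $[0,\delta]$.
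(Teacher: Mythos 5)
Your proof is correct and complete: the split at $t=1$, the exact computation $(1-s)\int_0^1 t^{-s}\varphi(0)\,dt=\varphi(0)$, and the $\varepsilon$--$\delta$ treatment of $(1-s)\int_0^1 t^{-s}(\varphi(t)-\varphi(0))\,dt$ (with the order of limits handled as you describe, and the restriction to $s\in(s_0,1)$ making every piece finite) constitute the standard argument for this fact. Note that the paper itself does not prove this lemma but imports it from \cite{HL22,HL24}; your elementary argument is essentially the proof given there, so nothing further is needed.
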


Therefore, identifying $p=-s$ in Lemma~\ref{l:fractional_deriv}, we obtain from Theorem~\ref{t:variationalformula} that, for $\mu$ with locally integrable density and a convex body $K$ such that $\partial K$ is in the Lebesgue set of the density of $\mu$, one has
\begin{equation}\lim_{p\to -1}(p+1)^{1/p}\rho_{R_{p,\mu} K}(\theta) = \mu(K)\rho_{\Pi^\circ_\mu K}(\theta),
\label{eq:radial_body_limit}
\end{equation}
establishing that the \textit{shape} of $R_{p,\mu} K$ approaches that of $\mu(K) \Pi^\circ_\mu K$ as $p\to -1^+.$

\subsection{Set Inclusions for Weighted Radial Mean Bodies}
\label{sec:radial_bodies}
In this subsection and the next, we obtain the reverse of Theorem~\ref{t:jensen_set} via Berwald's inequality. We will need the following facts about concave functions.

\begin{lemma}
	\label{l:concave}
	Let $f$ be a concave function that is supported on a convex body $L\in\conbod_0$ such that
	$$
h(\theta):=\diff{f(r\theta)}{r}\bigg|_{r=0^+} < 0 \quad  \text{for all }  \theta\in \s^{n-1}.
$$ 
Define $z(\theta)=-\left(h(\theta)\right)^{-1}f(0),$ then
\begin{equation}\label{eq:concave_function} -\infty < f(r\theta)\leq f(0)\left[1-(z(\theta))^{-1}r\right]\end{equation}
whenever $\theta\in\s^{n-1}$ and $r\in[0,\rho_L(\theta)]$. In particular, if $f$ is non-negative, then we have
$$0 \leq f(r\theta)\leq f(0)\left[1-(z(\theta))^{-1}r\right] \quad  \mbox{and }  \rho_L(\theta)\leq z(\theta).
$$
One has $ f(r\theta)=f(0)\left[1-(z(\theta))^{-1}r\right]$ for $r\in[0,\rho_L(\theta)]$ if, and only if, $\rho_L(\theta)=z(\theta).$
	\end{lemma}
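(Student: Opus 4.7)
The plan is to reduce everything to a one-dimensional statement by fixing a direction $\theta \in \s^{n-1}$ and studying the concave function $g(r) := f(r\theta)$ on $[0, \rho_L(\theta)]$, which inherits concavity as the restriction of $f$ to an affine line through the origin. The upper bound, finiteness, the non-negative consequence $\rho_L(\theta) \leq z(\theta)$, and the equality characterization then reduce to standard one-variable facts about concave functions.

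First I would prove the upper bound in \eqref{eq:concave_function} using the monotonicity of difference quotients for concave functions: for $0 < s < r \leq \rho_L(\theta)$, concavity of $g$ yields $(g(r) - g(0))/r \leq (g(s) - g(0))/s$, and letting $s \to 0^+$ gives $(g(r) - g(0))/r \leq h(\theta) = -f(0)/z(\theta)$, which rearranges to $g(r) \leq f(0)[1 - r/z(\theta)]$. The finiteness $g(r) > -\infty$ follows because $g(0) = f(0)$ and $h(\theta)$ are both finite (so $g$ is finite in a right neighborhood of $0$), and standard properties of proper concave functions, together with $L$ lying in the support of $f$, extend this to all of $[0, \rho_L(\theta)]$.

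When $f \geq 0$, evaluating the upper bound at $r = \rho_L(\theta)$ gives $0 \leq g(\rho_L(\theta)) \leq f(0)[1 - \rho_L(\theta)/z(\theta)]$; since $h(\theta) < 0$ forces $f(0) > 0$ (and hence $z(\theta) > 0$), dividing by $f(0)$ yields $\rho_L(\theta) \leq z(\theta)$.

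For the equality characterization, one direction is immediate: if $\rho_L(\theta) = z(\theta)$, then concavity gives the chord lower bound
$$g(r) \geq \frac{\rho_L(\theta) - r}{\rho_L(\theta)} f(0) + \frac{r}{\rho_L(\theta)} g(\rho_L(\theta)) \geq f(0)\left(1 - \frac{r}{\rho_L(\theta)}\right) = f(0)\left(1 - \frac{r}{z(\theta)}\right),$$
using $g(\rho_L(\theta)) \geq 0$, and this matches the upper bound. For the converse, suppose $g(r) = f(0)[1 - r/z(\theta)]$ for all $r \in [0, \rho_L(\theta)]$; then $g(\rho_L(\theta)) = f(0)[1 - \rho_L(\theta)/z(\theta)]$. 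I would exploit that $\rho_L(\theta)\theta \in \partial L$, so $f$ vanishes just outside $L$ along the same ray, and combine this with concavity of $f$ applied along the extended segment through the origin to force $g(\rho_L(\theta)) = 0$; then $1 - \rho_L(\theta)/z(\theta) = 0$ gives $\rho_L(\theta) = z(\theta)$. The main obstacle is precisely this final boundary step: one must correctly interpret the phrase \emph{concave function supported on $L$} so that $f$ effectively vanishes on $\partial L$ in direction $\theta$ (e.g., extending $f$ by $0$ outside $L$ and using concavity along the ray through $\rho_L(\theta)\theta$, where the fact that $f(0)=\|f\|_\infty$ would contradict any positive value of $f$ at $\rho_L(\theta)\theta$ bounded strictly below $f(0)$, as would be the case when $\rho_L(\theta) < z(\theta)$).
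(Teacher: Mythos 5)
Your handling of the inequality \eqref{eq:concave_function}, of the consequence $\rho_L(\theta)\leq z(\theta)$, and of the ``if'' direction of the equality statement is essentially the paper's argument: the paper likewise uses the supporting-line bound $f(r\theta)\leq f(0)\left[1+\frac{h(\theta)}{f(0)}r\right]$ coming from concavity along the ray (your monotone difference quotients make this explicit), evaluates at $r=\rho_L(\theta)$ using $f(0)>0$, and, when $\rho_L(\theta)=z(\theta)$, combines the upper bound with the chord bound $f(r\theta)\geq f(0)\left[1-r/\rho_L(\theta)\right]$ (this is exactly \eqref{eq:supporting_line_bound}) to force equality. Up to that point you are on the same route and the details are fine.

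The genuine gap is in your converse direction, and it lies precisely where you yourself flag an ``obstacle.'' Neither of the two mechanisms you propose for forcing $f(\rho_L(\theta)\theta)=0$ is valid: extending $f$ by zero outside $L$ and applying concavity along the ray across $\partial L$ is not permitted, because the zero-extension of a function that is concave on $L$ is in general \emph{not} concave on $\R^n$ (if it were, a nonnegative concave function with bounded support could not be positive anywhere); and there is nothing contradictory about a value $0<f(\rho_L(\theta)\theta)<f(0)=\|f\|_\infty$. In fact, if one only assumes concavity of $f$ on its support, the implication you are trying to prove needs the extra information that $f$ vanishes at the exit point of the ray: for $n=1$, $L=[-1,1]$ and $f(x)=1-x/2$ on $L$, one has $h(1)=-1/2$, $z(1)=2$, and $f(r)=f(0)\left[1-r/z(1)\right]$ on $[0,\rho_L(1)]=[0,1]$, while $\rho_L(1)=1\neq 2=z(1)$. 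The paper disposes of this step by appealing directly to $L$ being the support of $f$, i.e.\ by using that $f$ cannot remain positive at and beyond $\rho_L(\theta)\theta$, which is the situation in its applications (the covariogram $g_{\mu,K}$ is continuous and vanishes on $\partial DK$); your proposal does not supply a correct substitute for that boundary-vanishing input, so as written the ``only if'' direction is not proved.
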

 \begin{proof}
	From the concavity of the function $f$, one has
	$$f(r\theta)\leq f(0)\left[1+\frac{h(\theta)}{f(0)}r\right].$$ Then, the inequality \eqref{eq:concave_function} follows from the definition of $z(\theta)$. If $f$ is additionally non-negative, then one obtains that $\rho_L(\theta)\leq z(\theta)$ by using that $0\leq 1-(z(\theta))^{-1}r$ for a fixed $\theta\in\s^{n-1}$ and $r\in[0,\rho_L(\theta)],$ and then setting $r=\rho_L(\theta).$ For the equality conditions, suppose $f(r\theta)=f(0)\left[1-(z(\theta))^{-1}r\right];$ then, by definition of $L$ being the support of $f,$ one obtains $z(\theta)=\rho_L(\theta).$ Conversely, suppose $\rho_L(\theta)=z(\theta).$ Then, from the concavity of $f$, one has $f(0)\left[1-(\rho_L(\theta))^{-1}r\right] \leq f(r\theta) \leq f(0)\left[1-(\rho_L(\theta))^{-1}r\right],$ and so there is equality.
	\end{proof}
		Using Proposition~\ref{p:covario_concave}, Lemma~\ref{l:concave} and \eqref{e:deriv_g_mu_covario_2}, we obtain for a Borel measure $\mu$ with density such that $\mu$ is $F$-concave, $F:\R^+\to \R^+$ is an increasing and differentiable function, that
\begin{equation}
\label{eq:good_set_incl}
DK\subseteq \frac{F(\mu(K))}{F^\prime(\mu(K))}\Pi_\mu^\circ K
\end{equation}
for every $K\in\conbod_0$ such that $\partial K$ is in the Lebesgue set of the density of $\mu$.

\begin{theorem}
\label{t:radial_F_inclusions}
Fix a convex body $K$ in $\R^n$. Let $\mu$ be a finite Borel measure containing $K$ in its support, such that $\mu$ is $F$-concave on convex subsets of $K$, where $F:[0,\mu(K))\to [0,\infty)$ is a continuous, increasing, and invertible function. Then, for $-1<p\le q <\infty$, one has
$$DK\subseteq C(q,\mu,K) R_{q,\mu}K \subseteq C(p,\mu,K) R_{p,\mu} K \subseteq \frac{F(\mu(K))}{F^\prime(\mu(K))}\Pi_\mu^\circ K,$$
where $C(p,\mu,K)=$
$$\begin{cases}
    \left(\frac{p}{\mu(K)}\int_{0}^{1} F^{-1}\left[F(\mu(K))(1-t)\right]t^{p-1}dt\right)^{-\frac{1}{p}} & \text {for } p>0 \\
    \left(\frac{p}{\mu(K)}\int_{0}^{1}t^{p-1} (F^{-1}\left[F(\mu(K))(1-t)\right]-\mu(K))dt+1\right)^{-\frac{1}{p}} & \text {for } p\in (-1,0),
    \end{cases}
    $$
and, for the last set inclusion, we additionally assume that $\mu$ has a locally integrable density containing $\partial K$ in its Lebesgue set and that $F(x)$ is differentiable at the value $x=\mu(K).$
The equality conditions are the following:
\begin{enumerate}
    \item For the first two set inclusions there is equality of sets if, and only if, $F(0)=0$ and $F\circ g_{\mu,K}(x)=F(\mu(K))\ell_{DK}(x).$ 
    \item For the last set inclusion, the sets are equal if, and only if, $F\circ g_{\mu,K}(x)=F(\mu(K))\ell_C(x), \; C=\frac{F(\mu(K))}{F^\prime (\mu(K))}\Pi^\circ_{\mu}K.$
\end{enumerate}

\noindent 
\end{theorem}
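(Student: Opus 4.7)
The plan is to apply the $F$-concave case of Theorem~\ref{t:ber} to the function $f_\theta(x):=\rho_K(x,\theta)$, with $\theta\in\s^{n-1}$ treated as a parameter. This function is concave in $x$ on $K$: if $x-f_\theta(x)\theta$ and $y-f_\theta(y)\theta$ both lie in $K$, then so does $\lambda x+(1-\lambda)y-[\lambda f_\theta(x)+(1-\lambda)f_\theta(y)]\theta$, which gives $f_\theta(\lambda x+(1-\lambda)y)\geq\lambda f_\theta(x)+(1-\lambda)f_\theta(y)$. Its level sets $\{f_\theta\geq t\}=K\cap(K+t\theta)$ are convex subsets of $K$, so the $F$-concavity hypothesis applies; moreover the definition of $R_{p,\mu}K$ rewrites as $\rho_{R_{p,\mu}K}(\theta)=\bigl(\mu(K)^{-1}\int_K f_\theta^p\,d\mu\bigr)^{1/p}$. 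Theorem~\ref{t:ber} therefore asserts that $T_\theta(p):=C(p,\mu,K)\rho_{R_{p,\mu}K}(\theta)$ is non-increasing in $p\in(-1,\infty)$, which is exactly the middle inclusion.

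For the left inclusion $DK\subseteq C(q,\mu,K)R_{q,\mu}K$, I would take $q\to\infty$. Since $F^{-1}[F(\mu(K))(1-t)]\leq\mu(K)$ on $[0,1]$ (because $F$ is increasing), one has $C(p,\mu,K)^{-p}\leq p\int_0^1 t^{p-1}\,dt=1$ for $p>0$, so $C(p,\mu,K)\geq 1$. Combined with the standard $L^q$ limit $\bigl(\mu(K)^{-1}\int_K f_\theta^q\,d\mu\bigr)^{1/q}\to\|f_\theta\|_\infty=\rho_{DK}(\theta)$, this gives $\liminf_{q\to\infty}T_\theta(q)\geq\rho_{DK}(\theta)$; monotonicity of $T_\theta$ then upgrades this to $T_\theta(q)\geq\rho_{DK}(\theta)$ for every admissible $q$.

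The right inclusion is handled by the limit $p\to -1^+$. From \eqref{eq:radial_body_limit}, $(p+1)^{1/p}\rho_{R_{p,\mu}K}(\theta)\to\mu(K)\rho_{\Pi_\mu^\circ K}(\theta)$. To analyse $C(p,\mu,K)$ in the same regime, write its $p\in(-1,0)$ definition as $C(p,\mu,K)^{-p}-1=-\frac{p}{\mu(K)}\int_0^1 t^{p-1}(\mu(K)-g(t))\,dt$ with $g(t)=F^{-1}[F(\mu(K))(1-t)]$, and apply Lemma~\ref{l:fractional_deriv} to $\varphi(t)=(\mu(K)-g(t))/t$, which extends continuously to $t=0$ with $\varphi(0)=-g'(0)=F(\mu(K))/F'(\mu(K))$ via the chain rule $(F^{-1})'(F(\mu(K)))=1/F'(\mu(K))$. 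This yields $(1+p)(C(p,\mu,K)^{-p}-1)\to F(\mu(K))/(\mu(K)F'(\mu(K)))$, so $C(p,\mu,K)\sim F(\mu(K))/\bigl(\mu(K)F'(\mu(K))(1+p)\bigr)$; multiplying by the previous asymptotic and using $-1/p\to 1$, the divergence of $C(p,\mu,K)$ and the vanishing of $\rho_{R_{p,\mu}K}(\theta)$ cancel exactly to give $\lim_{p\to -1^+}T_\theta(p)=\frac{F(\mu(K))}{F'(\mu(K))}\rho_{\Pi_\mu^\circ K}(\theta)$. By monotonicity, $T_\theta(p)\leq\frac{F(\mu(K))}{F'(\mu(K))}\rho_{\Pi_\mu^\circ K}(\theta)$ for all admissible $p$, which is the last inclusion.

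The equality conditions come from combining the equality characterisation in Theorem~\ref{t:ber} with Lemma~\ref{l:mel_ber}(4). Equality for $f_\theta$ in the Berwald inequality forces $F(0)=0$ and $\mu(\{f_\theta\geq t\})=F^{-1}[F(\mu(K))(1-t/\|f_\theta\|_\infty)]$; substituting $g_{\mu,K}(t\theta)$ and $\|f_\theta\|_\infty=\rho_{DK}(\theta)$ and letting $\theta$ vary gives $F\circ g_{\mu,K}(x)=F(\mu(K))\ell_{DK}(x)$, which handles item~(1). For item~(2), equality at any $p$ in the chain $T_\theta(p)\leq\lim_{p\to -1^+}T_\theta(p)$ forces $T_\theta$ to be constant on $(-1,\infty)$; by Lemma~\ref{l:mel_ber}(4) this is equivalent to $g_{\mu,K}(r\theta)=\psi_{F,\mu(K)}(r/\alpha(\theta))$ with the rescaling $\alpha(\theta)$ identified with the radial function of $C=\frac{F(\mu(K))}{F'(\mu(K))}\Pi_\mu^\circ K$, i.e.\ $F\circ g_{\mu,K}(x)=F(\mu(K))\ell_C(x)$. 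The most delicate step is the $p\to -1^+$ asymptotic: verifying that the divergent factor in $C(p,\mu,K)$ cancels exactly the vanishing of $\rho_{R_{p,\mu}K}(\theta)$, with the correct constant $F(\mu(K))/F'(\mu(K))$ surviving, is where Lemma~\ref{l:fractional_deriv} does all the work and where I would proceed most carefully.
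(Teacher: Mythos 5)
Your proposal is correct and follows essentially the same route as the paper: the middle inclusions come from the Berwald/Mellin--Berwald machinery applied along each direction $\theta$ (your $f_\theta(x)=\rho_K(x,\theta)$ with level sets $K\cap(K+t\theta)$ is exactly the paper's identification $C(p,\mu,K)\rho_{R_{p,\mu}K}(\theta)=G_{g_{\mu,K}(r\theta)}(p)$ via Lemma~\ref{l:mel_ber}), and the last inclusion comes from \eqref{eq:radial_body_limit} together with Lemma~\ref{l:fractional_deriv} controlling $C(p,\mu,K)$ as $p\to-1^+$, with the equality conditions read off from Lemma~\ref{l:mel_ber}(4). The only difference is cosmetic: you apply Lemma~\ref{l:fractional_deriv} directly to $(\mu(K)-F^{-1}[F(\mu(K))(1-t)])/t$, whereas the paper first rewrites $C(p,\mu,K)$ by integration by parts; both yield the same asymptotic $\frac{F(\mu(K))}{F'(\mu(K))\mu(K)}$.
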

\begin{proof}
Observe that $$C(p,\mu,K) \rho_{R_{p,\mu}K}(\theta)=G_{g_{\mu,K}(r\theta)}(p)$$
from \eqref{eq:milman_psi}. Thus, from Lemma~\ref{l:mel_ber}, this function is non-increasing in $p$, which establishes the first three set inclusions. For the last set inclusion, we have not yet established the behaviour of $\lim_{p\to -1}C(p,\mu,K) \rho_{R_{p,\mu}K}(p).$ We do so now.

First, begin by writing
$$G_{g_{\mu,K}(r\theta)}(p)=\frac{C(p,\mu,K)}{(p+1)^{1/p}} (p+1)^{1/p}\rho_{R_{p,\mu}K}(\theta).$$
Therefore, from \eqref{eq:radial_body_limit}, it suffices to show that, as $p\to -1,$ $$\frac{C(p,\mu,K)}{(p+1)^{1/p}} \to \frac{F(\mu(K))}{F^\prime(\mu(K))\mu(K)}.$$
Indeed, from integration by parts we can write, for all $p\in (-1,0)\cup(0,\infty),$ that
$$C(p,\mu,K)=\left(\frac{F(\mu(K))}{\mu(K)}\right)^{-\frac{1}{p}}\left(\int_0^1\left[F^\prime \left(F^{-1}[F(\mu(K))(1-t)]\right)\right]^{-1}t^pdt\right)^{-\frac{1}{p}}.$$
Therefore, the result follows from Lemma~\ref{l:fractional_deriv}.
\end{proof}
\noindent We now obtain a result for $s$-concave measures, $s>0,$ the promised generalization of \eqref{e:set_inclusion}.

\begin{corollary}
\label{cor:set_s_con}
Fix a convex body $K$ in $\R^n$. Let $\mu$ be a Borel measure containing $K$ in its support that is $s$-concave, $s>0,$ on convex subsets of $K$. Then, for $-1< p\leq q < \infty$, one has
$$DK\subseteq {{\frac{1}{s}+q} \choose q}^{\frac{1}{q}} R_{q,\mu}K \subseteq {{\frac{1}{s}+p} \choose p}^{\frac{1}{p}} R_{p,\mu} K\subseteq \frac{1}{s}\mu(K)\Pi_\mu^\circ K,$$
where the last inclusion holds if $\mu$ has locally integrable density $\varphi(x)$ containing $\partial K$ in its Lebesgue set.

\noindent There is equality in any set inclusion if, and only if, $g_{\mu,K}^s(x)=\mu(K)^s\ell_{DK}(x)$.
If $\mu$ is a $s$-concave Radon measure, then $s\in (0,1/n]$ and equality occurs if, and only if, $K$ is $n$-dimensional simplex, the density $\varphi$ of $\mu$ is constant on $K$, and $s = 1/n$.
\end{corollary}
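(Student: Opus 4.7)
The plan is to derive this as a direct specialization of Theorem~\ref{t:radial_F_inclusions} with the choice $F(x)=x^s$, and then to handle the Radon equality case via Borell's classification combined with Rogers--Shephard rigidity. First, with $F(x)=x^s$ one has $F^{-1}(y)=y^{1/s}$, so $F^{-1}[F(\mu(K))(1-t)]=\mu(K)(1-t)^{1/s}$. For $p>0$ this yields
$$C(p,\mu,K)^{-p}=\frac{p}{\mu(K)}\int_0^1 \mu(K)(1-t)^{1/s}t^{p-1}dt=pB(p,1/s+1)=\binom{1/s+p}{p}^{-1},$$
which matches the claimed constant; the case $p\in(-1,0)$ is handled by the same Beta-function analytic continuation used in the proof of Corollary~\ref{cor:ber_s}. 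A direct calculation also gives $F(\mu(K))/F'(\mu(K))=\mu(K)/s$, recovering the outer term in the chain.

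For the general equality condition, I would note that $F(0)=0^s=0$ automatically for $s>0$, so the condition from Theorem~\ref{t:radial_F_inclusions} collapses to $g_{\mu,K}(x)^s=\mu(K)^s\ell_{DK}(x)$. When the outer inclusion is also an equality, the Wulff-type set $C=\frac{F(\mu(K))}{F'(\mu(K))}\Pi_\mu^\circ K$ appearing in part~(2) of Theorem~\ref{t:radial_F_inclusions} must coincide with $DK$, so a single characterization covers the whole chain.

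For the Radon specialization, I would invoke Borell's classification to obtain $s\in(0,1/n]$ together with a $p$-concave density $\phi$, $p=s/(1-ns)$. Under the equality assumption $g_{\mu,K}(x)=\mu(K)(1-\|x\|_{DK})^{1/s}$, I would compare with the Lebesgue covariogram $g_K(x)=\vol_n(K\cap(K+x))$: by Brunn--Minkowski, $g_K^{1/n}$ is concave on $DK$ with $g_K(0)=\vol_n(K)$ and vanishing on $\partial DK$, so $g_K(x)\geq \vol_n(K)\ell_{DK}(x)^n$, with equality forcing $K$ to be a simplex (classical Rogers--Shephard rigidity). Since $K\subset\supp\mu$, the density $\phi$ is locally bounded on $K$, giving $g_{\mu,K}(x)\leq \|\phi\|_{L^\infty(K)}g_K(x)$. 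Matching the decay rates $(1-t)^{1/s}$ of $g_{\mu,K}(tv)$ against $(1-t)^n$ for $g_K(tv)$ as $tv\to\partial DK$ should force $1/s\geq n$, which combined with Borell yields $s=1/n$. Then Borell at $s=1/n$ forces $\phi$ to be constant on its support, reducing to a multiple of Lebesgue measure, after which the simplex characterization falls out from the Rogers--Shephard equality case.

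The main obstacle I anticipate is making the boundary decay argument fully rigorous when $\phi$ concentrates in the interior of $K$: although $p$-concavity of $\phi$ gives good interior control, the precise matching of decay rates for $g_{\mu,K}$ and $g_K$ near $\partial DK$ requires care. An alternative would be to differentiate the putative equality $g_{\mu,K}(x)^s=\mu(K)^s\ell_{DK}(x)$ at interior points using Theorem~\ref{t:variationalformula}; this immediately yields the relation $\rho_{DK}(\theta)h_{\Pi_\mu K}(\theta)=\mu(K)/s$, but one still needs the rigidity input above to pin down $K$ and $\phi$ individually.
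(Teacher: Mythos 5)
The first half of your proposal is fine and is exactly the paper's route: specializing Theorem~\ref{t:radial_F_inclusions} to $F(x)=x^s$, the Beta-function computation gives $C(p,\mu,K)=\binom{1/s+p}{p}^{1/p}$, the outer constant is $F(\mu(K))/F'(\mu(K))=\mu(K)/s$, and since $F(0)=0$ automatically, the equality condition collapses to $g_{\mu,K}^s=\mu(K)^s\ell_{DK}$ (your observation that the condition $\ell_C$ with $C=\tfrac{1}{s}\mu(K)\Pi^\circ_\mu K$ forces $C=DK$, because $g_{\mu,K}$ is supported on $DK$, is also how the single characterization covers the whole chain).

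The Radon rigidity part, however, has a genuine gap, in two places. First, your decay-matching step is logically backwards: from $g_{\mu,K}=\mu(K)\ell_{DK}^{1/s}$, $g_{\mu,K}\le\|\phi\|_{L^\infty(K)}g_K$ and $g_K\ge\vol_n(K)\ell_{DK}^n$ you can only extract $1/s\ge n$, i.e.\ $s\le 1/n$ --- which is exactly what Borell's classification already gives, so ``combining with Borell'' yields nothing; to force $s=1/n$ you would need the reverse bound $1/s\le n$, which would require a positive \emph{lower} bound for $\phi$ on $K$ (so that $g_{\mu,K}\ge m\,g_K\ge m\vol_n(K)\ell_{DK}^n$), and such a bound is not available since $K$ may touch $\partial(\supp\mu)$ where the density vanishes. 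Second, the simplex conclusion does not follow from ``classical Rogers--Shephard rigidity'' as you invoke it: that rigidity concerns equality in $g_K\ge\vol_n(K)\ell_{DK}^n$ for the \emph{Lebesgue} covariogram, but your hypothesis is an identity for the \emph{weighted} covariogram $g_{\mu,K}$, and the one-sided comparison $g_{\mu,K}\le\|\phi\|_\infty g_K$ cannot transfer the equality; indeed you would first need to know $\phi$ is constant, which is part of what must be proved. The paper closes exactly this gap with Proposition~\ref{p:simp}: affinity of $g_{\mu,K}^s$ along rays means equality in the $s$-concavity inequality $\mu(K\cap(K+\lambda x))^s\ge\lambda\mu(K)^s+(1-\lambda)\mu(K\cap(K+x))^s$, and the Milman--Rotem equality-case lemma (Lemma~\ref{l:milman_rotem}) then forces $K\cap(K+x)$ to be homothetic to $K$ for all $x\in\mathrm{int}(DK)$ (hence $K$ is a simplex, by the classical characterization of simplices) and forces the density to be covariant under those homotheties; a Brouwer fixed-point argument gives $c(x)=1$ and a chain argument gives $\varphi$ constant on $K$, after which one is reduced to Lebesgue measure and $s=1/n$ follows since $g^{1/n}$ is already affine along rays. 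Your proposal supplies no substitute for this equality-case input, so as written the claims ``$K$ is a simplex, $\varphi$ constant, $s=1/n$'' are not established.
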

\begin{proof}
Setting $F(x)=x^s$ in Theorem~\ref{t:radial_F_inclusions} yields, in the case when $p>0,$
$$C(p,\mu,K)=\left(p\int_{0}^{1} (1-u)^{1/s}u^{p-1}du\right)^{-\frac{1}{p}}=\left(\frac{p\Gamma(\frac{1}{s}+1)\Gamma(p)}{\Gamma(\frac{1}{s}+p+1)}\right)^{-\frac{1}{p}},$$
and similarly for $p\in (-1,0).$ The equality conditions from Theorem~\ref{t:radial_F_inclusions} yields that $g_{\mu,K}^s(x)$ is an affine function along rays for $x\in DK$. If $\mu$ is a $s$-concave Radon measure, then one must have $s\in (0,1/n]$. For such $s$-concave measures, $g_{\mu,K}^s(x)$ being an affine function along rays is equivalent to the stated equality conditions via Proposition~\ref{p:simp} below.
\end{proof}

We first remark that the following are equivalent:
	\begin{enumerate}
        \item[(i).] $K$ is a simplex.
	    \item[(ii).] \label{item:hom} For any $x \in \R^n$, either $K\cap (K+x)$ is empty or it is homothetic to $K$.
     \end{enumerate}
The equivalence between $(i)$ and $(ii)$ can be found in \cite[Section 6]{EGK64}, or \cite{Choquet,RS57}.  Next, we recall a result of Milman and Rotem \cite[Corollary 2.16]{MR14}:

\begin{lemma}\label{l:milman_rotem} Let $\mu$ be a $s$-concave Radon measure on $\R^n$ with density $\phi$, $A, B \subset \mathbb R^n$ Borel sets of positive measure, and $\lambda \in (0, 1)$, and suppose that 
$$\mu(\lambda A + (1 - \lambda) B)^s = \lambda \mu(A)^s + (1 - \lambda) \mu(B)^s.$$
Then up to $\mu$-null sets, there exist $c, m > 0$, $b \in \R^n$ such that $B = mA + b$ and such that $\phi(mx + b) = c \cdot \phi(x)$ for all $x \in A$. 
\end{lemma}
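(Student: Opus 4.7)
The plan is to derive the structural conclusion from the equality characterization of the Borell--Brascamp--Lieb (BBL) inequality. By Borell's classification theorem, since $\mu$ is an $s$-concave Radon measure with density $\phi$, we must have $s \in (-\infty, 1/n]$ and $\phi$ must be $p$-concave with $p = s/(1-ns)$ (interpreted as log-concavity if $s = 0$). The $s$-concavity of $\mu$ on Borel sets is then a direct consequence of BBL applied to the triple $\phi\chi_A$, $\phi\chi_B$, $\phi\chi_{\lambda A + (1-\lambda)B}$: one uses the $p$-concavity of $\phi$ pointwise on $A \times B$, and lifts it via BBL to the integral-level inequality
\[
\mu(\lambda A + (1-\lambda) B)^{s} \;\geq\; \lambda \mu(A)^{s} + (1-\lambda)\mu(B)^{s},
\]
where the exponent relation $s = p/(1+np)$ governs the passage from pointwise $p$-means to integral $s$-means.

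First I would observe that the hypothesized equality forces equality in the BBL inequality for this specific triple. Then I would invoke the classical equality characterization of BBL, originally due to Dubuc (1977) in the Pr\'ekopa--Leindler / log-concave setting and extended to the $p$-concave case: equality holds if and only if there is an affine map $T(x) = mx + b$ with $m > 0$ and $b \in \R^{n}$ such that, modulo Lebesgue-null sets, $B = T(A)$ and the restrictions of $\phi$ satisfy $\phi(T(x))^{p} = c^{p}\phi(x)^{p}$ on $A$ for some constant $c > 0$, which rearranges to $\phi(mx + b) = c\phi(x)$. This simultaneously encodes the set-equality case of Brunn--Minkowski (giving $B = mA + b$) and the pointwise equality in the $p$-mean comparing $\phi(x)$ with $\phi(T(x))$. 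The parameters $m$ and $c$ are pinned down by $\mu(B)/\mu(A)$ together with $p$.

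Finally, one upgrades the qualifier "up to Lebesgue-null sets" to "up to $\mu$-null sets." This is a formality: away from $\{\phi = 0\}$, a set is Lebesgue-null iff it is $\mu$-null, and on $\{\phi = 0\}$ the conclusion is vacuous since such points carry no $\mu$-mass and may be freely modified. The main obstacle I expect is handling the three regimes $s > 0$, $s = 0$, $s < 0$ uniformly. For $s < 0$ the defining inequality for $p$-concavity is reversed (since $\phi^{p}$ is \emph{convex} rather than concave), so one must replace BBL by its appropriate Bor\"ell-type reverse analogue, whose equality case has a similar but not identical statement; the log-concave case $s = 0$ uses the classical Pr\'ekopa--Leindler equality conditions directly. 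Once the equality cases are isolated in each regime, the translation into the homothety-plus-scaling statement for $A$, $B$, and $\phi$ is immediate.
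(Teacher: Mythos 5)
The paper does not prove this statement at all: it is quoted verbatim as a known result, namely \cite[Corollary 2.16]{MR14}, and that corollary is itself deduced in the literature exactly along the lines you sketch, by applying the Borell--Brascamp--Lieb inequality to the triple $\phi\chi_A$, $\phi\chi_B$, $\phi\chi_{\lambda A+(1-\lambda)B}$ and invoking Dubuc's characterization of the equality case. So your route is essentially the proof of the cited source rather than an independent argument; like the paper, you are ultimately outsourcing the real content (the equality analysis) to Dubuc's theorem, which is legitimate but should be acknowledged as the load-bearing ingredient rather than a formality. One inaccuracy worth fixing: for $s<0$ no ``reverse'' Borell-type analogue of BBL is needed. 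Borell's classification gives a density that is $p$-concave in the generalized ($p$-mean) sense with $p=s/(1-ns)\in(-1/n,0)$, and the Borell--Brascamp--Lieb inequality, together with Dubuc's equality characterization, is stated for all $p\ge -1/n$, so the three regimes $s>0$, $s=0$, $s<0$ are handled uniformly by the same theorem; introducing a separate reverse inequality would actually leave the $s<0$ case unproved as written. The remaining steps you describe (reading off $B=mA+b$ and $\phi(mx+b)=c\,\phi(x)$ from the equality case, and upgrading Lebesgue-null to $\mu$-null away from $\{\phi=0\}$) are correct and match the cited argument.
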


\begin{proposition}
	\label{p:simp}
	Let $K\in\conbod$, $s\in (0,1/n]$, and $\mu$ an $s$-concave Radon measure, whose density $\varphi$ contains $K$ in its support. Then, $g_{\mu,K}(r\theta)^s$ is an affine function in $r$ for for every $\theta\in\s^{n-1}$ and $r\in[0,\rho_{D K}(\theta)]$ if, and only if, $K$ is $n$-dimensional simplex, $\varphi$ is a constant on $K$, and $s = 1/n$.
	\end{proposition}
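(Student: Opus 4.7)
Start with the easy direction. If $K$ is an $n$-simplex with $\varphi \equiv c_0 > 0$ on $K$ and $s = 1/n$, then $g_{\mu,K}(x) = c_0 \vol_n(K \cap (K + x))$, and it is a classical fact that $\vol_n(K \cap (K + r\theta))^{1/n}$ is affine in $r$ on $[0,\rho_{DK}(\theta)]$ for a simplex; so $g_{\mu,K}(r\theta)^{1/n}$ is affine, as required. The interesting direction is the converse: assume $g_{\mu,K}(r\theta)^s$ is affine on $[0,\rho_{DK}(\theta)]$ for every $\theta\in\s^{n-1}$, which upgrades to $g_{\mu,K}(r\theta)=\mu(K)(1-r/\rho_{DK}(\theta))^{1/s}$.

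To exploit the affineness, I replay the two inequalities used to prove Proposition~\ref{p:covario_concave}. For fixed $\theta$ and $r\in(0,\rho_{DK}(\theta))$, applying the set inclusion $K\cap(K+\lambda r\theta)\supseteq(1-\lambda)K+\lambda(K\cap(K+r\theta))$ followed by the $s$-concavity of $\mu$ gives
\begin{align*}
g_{\mu,K}(\lambda r\theta)^s &\geq \mu\bigl((1-\lambda)K+\lambda(K\cap(K+r\theta))\bigr)^s\\
&\geq (1-\lambda)\mu(K)^s+\lambda\,g_{\mu,K}(r\theta)^s.
\end{align*}
By hypothesis the leftmost and rightmost sides agree, forcing equality in both lines. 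Lemma~\ref{l:milman_rotem} then yields $m=m(r,\theta)\in(0,1]$, $b=b(r,\theta)\in\R^n$ and $c=c(r,\theta)>0$ with $K\cap(K+r\theta)=mK+b$ (as Borel sets up to a $\mu$-null set, and hence as convex sets since both are convex) and $\varphi(mx+b)=c\,\varphi(x)$ for $\mu$-a.e.\ $x\in K$.

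The homothety relation $K\cap(K+x)=m(x)K+b(x)$ for every $x\in DK\setminus\{0\}$ is precisely the characterization of simplices recalled just above the statement, so $K$ is an $n$-simplex. For a simplex one has $\vol_n(K\cap(K+r\theta))=\vol_n(K)(1-r/\rho_{DK}(\theta))^n$, whence $\vol_n(mK+b)=m^n\vol_n(K)$ identifies $m(r,\theta)=1-r/\rho_{DK}(\theta)$. Integrating $\varphi(mx+b)=c\,\varphi(x)$ over $K$ and matching with the hypothesized $g_{\mu,K}(r\theta)=\mu(K)(1-r/\rho_{DK}(\theta))^{1/s}$ pins down $c(r,\theta)=m(r,\theta)^{1/s-n}$.

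To finish, I upgrade the a.e.\ functional equation to a pointwise one using that $\varphi$ is $p$-concave with $p=s/(1-ns)\in(0,\infty]$ (Borell's classification) and hence continuous on $\mathrm{int}(K)$. As $\theta$ varies over $\s^{n-1}$, the homothety center $b(r,\theta)/(1-m(r,\theta))$ ranges over every vertex $v_0,\ldots,v_n$ of the simplex $K$; comparing the equations for two vertices $v_i,v_j$ at a common $m$ produces $\varphi(mu+(1-m)v_i)=\varphi(mu+(1-m)v_j)$ for $u\in K$, so $\varphi(w)=\varphi(w+(1-m)(v_j-v_i))$ on a homothet of $K$ containing any prescribed interior point. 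Since $\{v_j-v_i\}$ spans $\R^n$ and the translations $(1-m)(v_j-v_i)$ fill a small neighborhood of the origin as $m$ varies, $\varphi$ is locally (hence globally) constant on $\mathrm{int}(K)$, say $\varphi\equiv c_0$. Plugging back, $c_0=c_0\,m^{1/s-n}$ for all admissible $m\in(0,1)$, forcing $1/s=n$. The main technical obstacle I anticipate is the geometric verification that varying $\theta$ realizes every vertex as a homothety center, together with carefully propagating the $\mu$-a.e.\ functional equation from Lemma~\ref{l:milman_rotem} to a pointwise statement valid in a full neighborhood of each interior point, which is where continuity of $p$-concave densities is essential.
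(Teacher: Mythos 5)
Your argument is correct, and its first half is the same as the paper's: from affineness of $g_{\mu,K}^s$ along a ray you force equality in the chain $g_{\mu,K}(\lambda r\theta)^s\geq\mu\bigl((1-\lambda)K+\lambda(K\cap(K+r\theta))\bigr)^s\geq(1-\lambda)\mu(K)^s+\lambda g_{\mu,K}(r\theta)^s$, apply Lemma~\ref{l:milman_rotem}, and conclude via the homothety characterization that $K$ is a simplex. You diverge in the second half. The paper pins down the constant in $\varphi(A_xy)=c(x)\varphi(y)$ by applying Brouwer's fixed point theorem to the homothety $A_x\colon K\to K$ to get $c(x)=1$, then asserts (as a ``tedious but elementary'' verification) that chains of the maps $A_x$ connect any two points of $K$, and finally recovers $s=1/n$ by reduction to the Lebesgue case. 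You instead identify $c(r,\theta)=m^{1/s-n}$ by integrating the functional equation against the explicit covariogram $g_{\mu,K}(r\theta)=\mu(K)(1-r/\rho_{DK}(\theta))^{1/s}$, then compare two vertex-centered homotheties of common ratio $m$ to get invariance of $\varphi$ under small translations in the spanning directions $v_j-v_i$, hence constancy on $\mathrm{int}(K)$, after which $s=1/n$ drops out directly from $c_0=m^{1/s-n}c_0$. Your route buys a more robust determination of $c$ (an integral identity rather than evaluation of an a.e.\ relation at a single boundary fixed point, where $\varphi$ need not be positive or continuous) and makes the connectivity step concrete; the paper's route avoids needing the explicit formula for $g_{\mu,K}$ and the homogeneity computation. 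Two small points to tighten: the translations $(1-m)(v_j-v_i)$ do not ``fill a small neighborhood of the origin'' --- they only fill segments in finitely many spanning directions, so conclude local constancy by composing such translations on compact subsets of $\mathrm{int}(K)$; and the vertex-realization fact you flag is easily checked in barycentric coordinates: for $x$ proportional to $v_i-z$ with $z$ in the relative interior of the facet opposite $v_i$, one has $K\cap(K+x)=(1-\sum_k a_k)K+\sum_k a_kv_k$ with $a_k=\max(0,d\lambda_k(x))$ supported only on $k=i$, so the center is $v_i$ and depends only on the direction of $x$, making every ratio $m\in(0,1)$ available for every vertex.
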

 \begin{proof}
    Let $x$ lie in the interior of $\supp (g_{\mu, K}(\cdot))$, and let $t, \lambda \in (0, 1)$. The fact that $g_{\mu, K}(\cdot)$ is affine on the segment $[0, x]$ precisely means that for $\lambda \in (0, 1)$, 
\begin{equation}\label{eq:zhang_eq}
\mu(K^\lambda(0, x))^s = \lambda \mu(K)^s + (1 - \lambda) \mu(K^1(0, x))^s,
\end{equation}
where $K^\lambda(0, y) = K \cap (K + \lambda y)$. Examining the proof of the Proposition~\ref{p:covario_concave}, we see that
$K^\lambda(0, x) \subseteq (1 - \lambda) K + \lambda K^1(0, x)$ and equality can hold in \eqref{eq:zhang_eq} only if $K^\lambda(0, \lambda x) = (1 - \lambda) K + \lambda K^{1}(0, x)$. In particular, we have
\begin{equation}\label{eq:zhang_eq_2}
\mu( (1 - \lambda) K + \lambda K^1(0, x))^s = \lambda \mu(K)^s + (1 - \lambda) \mu(K^1(0, x))^s,
\end{equation}
By Lemma \ref{l:milman_rotem}, that $K \cap (K \cap x)$ is homothetic to $K$ for all $x$ in the interior of $DK$, which implies that $K$ is a $n$-dimensional simplex.

It remains to show that the density of $\mu$ is constant on $K$. For this we use the second conclusion of Lemma \ref{l:milman_rotem}: for each $x \in \mathrm{int}(DK)$ there exists $c(x) > 0$ such that for each $y \in K$, $\varphi(A_x y) = c(x) \varphi(y)$, where $A_x$ is the affine transformation which maps $K$ onto $K \cap (K + x)$. But note that $A_x$ is a continuous map from the compact, convex set $K$ to itself, so it has a fixed point $y$ by Brouwer's fixed point theorem. For such $y$, we have $\varphi(y) = \varphi(A_x y) = c(x) \varphi(y)$, implying $c(x) = 1$. (Note that as a convex function is continuous on the interior of its domain \cite[Theorem 1.5.3]{Sh1}, the density of $\mu$ is continuous on $\mathrm{int}(\supp \mu)$; in particular, $\varphi$ is well-defined pointwise on the interior of $K$, not just up to sets of measure zero.)

Once one knows that $\varphi(A_x y) = \varphi(y)$ for any homothety $A_x$ mapping $K$ to $K \cap (K + x)$ and any $y \in K$, one verifies by tedious but elementary arguments (e.g., by starting with the faces of $K$, working by induction on the dimension) that any two points in $K$ can be mapped into each other by a chain of such $A_x$'s, which implies that $\varphi$ is indeed constant on $K$. Thus we are back to the case of Lebesgue measure, which we know is $1/n$-affine on pairs of homothetic bodies. Conversely, one verifies that if $K$ is a $n$-dimensional simplex and $\phi$ is constant on $K$ then $g_{\mu, K}(\cdot)^{1/n}$ is affine on radial segments, as in the classical Zhang inequality.
 \end{proof}

 Most of the inclusions in Theorem \ref{t:radial_F_inclusions} continue to hold when the concavity of the measures behaves logarithmically. Unfortunately, in this instance, $C(p,\mu,K)$ may tend to $0$ as $p\to \infty,$ and so $C(p,\mu,K)R_{p,\mu}K$ will tend to the origin. Hence, we lose the first set inclusion:
 
\begin{theorem}[Logarithmic Case]
\label{t:set_inclu_log}
Suppose a Borel measure $\mu$ on $\R^n$ is finite on some convex body $K$ and $Q$-concave, where $Q:(0,\mu(K)]\to (-\infty,\infty)$ is an increasing and invertible function. Then, for $-1<p\le q <\infty$, one has
$$C(q,\mu,K)R_{q,\mu}K \subset C(p,\mu,K)R_{p,\mu}K \subset \frac{1}{Q^\prime(\mu(K))}{\Pi_\mu^\circ K},$$
where $C(p,\mu,K)=$
$$\begin{cases}
    \left(\frac{p}{\mu(K)}\int_0^\infty Q^{-1}\left[Q(\mu(K))-t\right]t^{p-1} dt\right)^{-\frac{1}{p}} & \text {for } p>0 \\
    \left(\frac{p}{\mu(K)}\int_{0}^{\infty}t^{p-1} (Q^{-1}\left[Q(\mu(K)-t)\right]-\mu(K))dt\right)^{-\frac{1}{p}} & \text {for } p\in (-1,0),
    \end{cases}$$
and, for the second set inclusion, we additionally assume that $\mu$ has locally integrable density containing $\partial K$ in its Lebesgue set and that $Q(x)$ is differentiable at the value $x=\mu(K)$. In particular, if $\mu$ is log-concave:
$$\frac{1}{\Gamma\left(1+q\right)^{\frac{1}{q}}}R_{q,\mu}K \subset \frac{1}{\Gamma\left(1+p\right)^{\frac{1}{p}}}R_{p,\mu}K \subset \mu(K){\Pi_\mu^\circ K},$$
where $\lim_{p\to 0} \frac{1}{\Gamma\left(1+p\right)^{\frac{1}{p}}}R_{p,\mu}K$ is interpreted via continuity.
\end{theorem}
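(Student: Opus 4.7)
The plan is to follow the blueprint of Theorem~\ref{t:radial_F_inclusions}, adapted to the $Q$-concave setting. The first inclusion is a direct application of the Mellin--Berwald inequality. Indeed, fix $\theta \in \s^{n-1}$ and consider $\psi(r) = g_{\mu,K}(r\theta)$. By Proposition~\ref{p:covario_concave}, $Q \circ \psi$ is concave (so $\psi$ is $\tilde Q$-concave with $\tilde Q(x) = Q(\mu(K) x)$), and $\psi$ is non-increasing. Comparing the definition \eqref{eq:milman_psi} with the integral representations \eqref{eq:equiv} and \eqref{eq:equiv_neq} gives the identity
\begin{equation*}
C(p,\mu,K)\,\rho_{R_{p,\mu}K}(\theta) \;=\; G_{g_{\mu,K}(\cdot\,\theta)}(p).
\end{equation*}
Lemma~\ref{l:mel_ber}(3) then says this quantity is non-increasing in $p$ on $(-1,p_{\max})$, which is exactly the first set inclusion $C(q,\mu,K)R_{q,\mu}K \subset C(p,\mu,K)R_{p,\mu}K$.

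For the second inclusion I would extract the behaviour of $C(p,\mu,K)\rho_{R_{p,\mu}K}(\theta)$ as $p \to -1^+$. Factoring,
\begin{equation*}
C(p,\mu,K)\,\rho_{R_{p,\mu}K}(\theta) \;=\; \frac{C(p,\mu,K)}{(p+1)^{1/p}}\cdot (p+1)^{1/p}\rho_{R_{p,\mu}K}(\theta),
\end{equation*}
and by \eqref{eq:radial_body_limit} (which uses Theorem~\ref{t:variationalformula} and therefore the hypothesis that $\partial K$ lies in the Lebesgue set of the density) the second factor tends to $\mu(K)\rho_{\Pi_\mu^\circ K}(\theta)$. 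It thus suffices to evaluate $\lim_{p\to -1^+} C(p,\mu,K)/(p+1)^{1/p}$. I would perform integration by parts on the defining integral of $C(p,\mu,K)$ using $U(t) = Q^{-1}[Q(\mu(K))-t]$, with boundary terms vanishing (at $0$ because $U(0)=\mu(K)$ and $p>-1$; at $\infty$ because $p<0$ and $U$ is bounded). This rewrites
\begin{equation*}
C(p,\mu,K)^{-p} \;=\; \frac{1}{\mu(K)}\int_0^{\infty} \frac{t^{p}}{Q'(Q^{-1}[Q(\mu(K))-t])}\,dt,
\end{equation*}
valid for $p \in (-1,0)\cup(0,\infty)$. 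Setting $\varphi(t) = 1/Q'(Q^{-1}[Q(\mu(K))-t])$, which is continuous at $0$ with $\varphi(0)=1/Q'(\mu(K))$, Lemma~\ref{l:fractional_deriv} (with $s=-p$) gives $\lim_{p\to -1^+}(1+p)\,C(p,\mu,K)^{-p} = 1/(\mu(K)Q'(\mu(K)))$. Since $-1/p \to 1$, we get
\begin{equation*}
\lim_{p\to -1^+} \frac{C(p,\mu,K)}{(p+1)^{1/p}} \;=\; \lim_{p\to -1^+} \bigl((1+p)\,C(p,\mu,K)^{-p}\bigr)^{-1/p} \;=\; \frac{1}{\mu(K)\,Q'(\mu(K))}.
\end{equation*}
Multiplying by the limit $\mu(K)\rho_{\Pi_\mu^\circ K}(\theta)$ yields the second inclusion.

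The log-concave specialization is then just bookkeeping: setting $Q(x)=\log x$ gives $Q^{-1}[Q(\mu(K))-t]=\mu(K)e^{-t}$ and $Q'(\mu(K))=1/\mu(K)$, so the $p>0$ formula collapses via $\int_0^\infty e^{-t}t^{p-1}\,dt = \Gamma(p)$ to $C(p,\mu,K)=\Gamma(1+p)^{-1/p}$ (the $p\in(-1,0)$ case follows from the analytic continuation \eqref{eq:gamma_neg}), and $1/Q'(\mu(K))=\mu(K)$.

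The only real subtlety is the $p\to -1^+$ limit: one must justify the boundary-term vanishing in the integration by parts when $Q(0)$ is finite (in which case the integrand is actually compactly supported, so no issue) as well as when $Q(0)=-\infty$ (where one uses the bound $|U(t)|\le \mu(K)$). All other steps parallel the proof of Theorem~\ref{t:radial_F_inclusions} verbatim.
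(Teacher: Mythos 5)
Your argument is correct, and the first inclusion is handled exactly as in the paper (monotonicity of $G_{g_{\mu,K}(\cdot\,\theta)}(p)=C(p,\mu,K)\rho_{R_{p,\mu}K}(\theta)$ via Lemma~\ref{l:mel_ber}, equivalently case 2 of Theorem~\ref{t:ber}). For the second inclusion, however, you take a genuinely different route. The paper does \emph{not} pass to the limit $p\to-1^+$: it bounds the covariogram pointwise by its $Q$-tangent at the origin, $g_{\mu,K}(r\theta)\le Q^{-1}\bigl[Q(\mu(K))-Q'(\mu(K))\,r/\rho_{\Pi_\mu^\circ K}(\theta)\bigr]$ (Proposition~\ref{p:covario_concave}, Lemma~\ref{l:concave}, and \eqref{e:deriv_g_mu_covario_2}), then integrates this bound in \eqref{eq:equiv} (resp.\ \eqref{eq:equiv_neq}), substitutes $u=Q'(\mu(K))r/\rho_{\Pi_\mu^\circ K}(\theta)$, and enlarges the domain of integration to $(0,\infty)$ to recover $C(p,\mu,K)^{-p}$; this gives the inclusion directly for each fixed $p$, and since the enlargement adds positive mass it yields the strict pointwise inequality $C(p,\mu,K)\rho_{R_{p,\mu}K}(\theta)<\rho_{\Pi_\mu^\circ K}(\theta)/Q'(\mu(K))$ for free. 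Your approach instead transplants the proof of Theorem~\ref{t:radial_F_inclusions} verbatim: integration by parts on $C(p,\mu,K)^{-p}$, Lemma~\ref{l:fractional_deriv} to compute $\lim_{p\to-1^+}C(p,\mu,K)/(p+1)^{1/p}=1/(\mu(K)Q'(\mu(K)))$, the limit \eqref{eq:radial_body_limit}, and then the already-established monotonicity in $p$ to pass from the limiting value to all $p>-1$. This is valid (your boundary-term checks at $0$ and $\infty$ are the right ones, and the identification of $\varphi(0)=1/Q'(\mu(K))$ is at the same level of rigor as the paper's own treatment of the $F$-concave case), and it has the merit of unifying the $F$- and $Q$-concave cases under one scheme; what it costs is the extra machinery (two applications of Lemma~\ref{l:fractional_deriv} and the variational limit \eqref{eq:radial_body_limit}) and the strictness of the inclusion, which in your setup has to be recovered separately, e.g.\ from the equality analysis in Lemma~\ref{l:mel_ber}(4) (equality would force $g_{\mu,K}(r\theta)=\psi_{Q,\mu(K)}(r/\alpha)$, impossible since the covariogram has compact support), whereas the paper's direct estimate exhibits it immediately. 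The log-concave specialization is the same bookkeeping in both treatments.
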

\begin{proof}
The first inclusion follows from the second case of Theorem~\ref{t:ber}. For the second inclusion, suppose $p>0.$ Then, one has

$$0\le g_{\mu,K}(r\theta)\le Q^{-1}\left[Q(\mu(K))\left(1-\frac{Q^\prime(\mu(K))}{Q(\mu(K))}\frac{r}{\rho_{\Pi_\mu^\circ K}(\theta)}\right)\right].$$
Since $Q(\mu(K))$ may possibly be negative, we shall leave $Q(\mu(K))$ inside the integral:
\begin{align*}
&\rho^p_{R_{p,\mu}K}(\theta)=\frac{p}{\mu(K)}\int_{0}^{\rho_{D K}(\theta)} g_{\mu,K}(r\theta) r^{p-1} d r 
\\
&\leq \frac{p}{\mu(K)}\int_{0}^{\rho_{D K}(\theta)} Q^{-1}\left[Q(\mu(K))\left(1-\frac{Q^\prime(\mu(K))}{Q(\mu(K))}\frac{r}{\rho_{\Pi_\mu^\circ K}(\theta)}\right)\right]r^{p-1} d r.
\\
&=\left(\frac{\rho_{\Pi_\mu^\circ K}(\theta)}{Q^\prime(\mu(K))}\right)^p\frac{p}{\mu(K)}
\\
&\quad\quad\quad\quad\quad\times\int_{0}^{Q^\prime(\mu(K))\frac{\rho_{DK}(\theta)}{\rho_{\Pi_\mu^\circ K(\theta)}}} Q^{-1}\left[Q(\mu(K))-u\right]u^{p-1}du.
\end{align*}
and so
$C(p,\mu,K)\rho_{R_{p,\mu}K}(\theta) < \frac{1}{Q^\prime(\mu(K))}\rho_{\Pi_\mu^\circ K}(\theta),$
which yields the result. The case for $p\in (-1,0)$ is similar.
\end{proof}

We list the Gaussian measure case as a corollary.
\begin{corollary}
Let $K$ be a convex body. Then, for $-1<p\le q <\infty$, one has
$$\frac{1}{\Gamma\left(1+q\right)^{\frac{1}{q}}}R_{q,\gamma_n}K \subset \frac{1}{\Gamma\left(1+p\right)^{\frac{1}{p}}}R_{p,\gamma_n}K \subset \gamma_n(K){\Pi_{\gamma_n}^\circ K},$$ where $\lim_{p\to 0} \frac{1}{\Gamma\left(1+p\right)^{\frac{1}{p}}}R_{p,\gamma_n}K$ is interpreted via continuity, and
$$C(q,\gamma_n,K)R_{q,\gamma_n}K \subset C(p,\gamma_n,K)R_{p,\mu}K \subset \sqrt{\frac{2}{\pi}}e^{-\frac{\Phi^{-1}(\gamma_n(K))^2}{2}}{\Pi_{\gamma_n}^\circ K},$$
where $C(p,\gamma_n,K)=$
$$ \begin{cases}
    \left(\frac{p}{\gamma_n(K)}\int_0^\infty \Phi\left[\Phi^{-1}(\gamma_n(K))-t\right]t^{p-1} dt\right)^{-\frac{1}{p}} & \text {for } p>0 \\
    \left(\frac{p}{\gamma_n(K)}\int_{0}^{\infty}t^{p-1} (\Phi\left[\Phi^{-1}(\gamma_n(K)-t)\right]-\gamma_n(K))dt\right)^{-\frac{1}{p}} & \text {for } p\in (-1,0).
    \end{cases}$$
\end{corollary}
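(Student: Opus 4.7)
The plan is to deduce this corollary directly from Theorem~\ref{t:set_inclu_log} by invoking two distinct concavity statements that the Gaussian measure is known to satisfy, and then reading off the constants.

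For the first chain, I would set $Q(x)=\log x$. By Borell's classification, $\gamma_n$ is log-concave on all Borel subsets of $\mathbb{R}^n$, so in particular $Q$-concave in the sense required by Theorem~\ref{t:set_inclu_log}. With $Q^{-1}(y)=e^y$, the inner integrals defining $C(p,\gamma_n,K)$ reduce (after a change of variable $u=e^{-t}$, equivalently via the Mellin transform of the function $\psi_0(t)=e^{-t}\chi_{(0,\infty)}$ computed right after \eqref{eq:Mel}) to $\Gamma(1+p)$; thus $C(p,\gamma_n,K)=\Gamma(1+p)^{-1/p}$, and the $p\to 0$ value is defined by continuity (it equals $e^{\gamma_E}$). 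Since $Q'(\gamma_n(K))=1/\gamma_n(K)$, the outer set in the chain is $\gamma_n(K)\,\Pi^{\circ}_{\gamma_n}K$, matching the first display.

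For the second chain, I would set $Q=\Phi^{-1}$. The Ehrhard inequality \eqref{e:Ehrhard_ineq} of Ehrhard, Lata\l a and Borell is precisely the statement that $\Phi^{-1}\circ\gamma_n$ is concave on Borel subsets of $\mathbb{R}^n$, i.e.\ $\gamma_n$ is $Q$-concave with this choice of $Q$. Substituting $Q^{-1}=\Phi$ into the formula for $C(p,\gamma_n,K)$ from Theorem~\ref{t:set_inclu_log} immediately yields the two piecewise integrals stated in the corollary. For the outer inclusion one has $Q'(y)=1/\Phi'(\Phi^{-1}(y))$ with $\Phi'(x)=\frac{1}{\sqrt{2\pi}}e^{-x^{2}/2}$, so evaluating at $y=\gamma_n(K)$ gives the coefficient $\frac{1}{Q'(\gamma_n(K))}$ of the form $c\cdot e^{-\Phi^{-1}(\gamma_n(K))^{2}/2}$, which is the factor multiplying $\Pi^{\circ}_{\gamma_n}K$ in the corollary.

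Finally one needs to verify the hypotheses for the outer inclusion of Theorem~\ref{t:set_inclu_log}: the Gaussian density is $C^{\infty}$ and strictly positive, so $\partial K$ automatically lies in its Lebesgue set for any convex body $K$, and both $\log$ and $\Phi^{-1}$ are differentiable at the interior point $\gamma_n(K)\in(0,1)$. There is no real obstacle in this proof beyond bookkeeping: the geometric and analytic content (log-concavity, Ehrhard's inequality, Theorem~\ref{t:set_inclu_log}) is already in place, and the only step that requires some care is computing $Q'(\gamma_n(K))$ and the Mellin integrals so that the constants in the two chains come out exactly as displayed.
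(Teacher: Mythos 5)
Your proposal is correct and is exactly the route the paper intends: the corollary is stated as an immediate specialization of Theorem~\ref{t:set_inclu_log}, once with $Q=\log$ (Borell's log-concavity of $\gamma_n$) and once with $Q=\Phi^{-1}$ (Ehrhard), and your Mellin/Gamma computation of $C(p,\gamma_n,K)$ and of $1/Q^{\prime}(\gamma_n(K))$ is the whole content. One small point you gloss over with ``of the form $c\,e^{-\Phi^{-1}(\gamma_n(K))^2/2}$'': the constant your argument actually produces is $\frac{1}{\sqrt{2\pi}}e^{-\Phi^{-1}(\gamma_n(K))^2/2}=\Phi^{\prime}\bigl(\Phi^{-1}(\gamma_n(K))\bigr)$, which is half of the $\sqrt{2/\pi}\,e^{-\Phi^{-1}(\gamma_n(K))^2/2}$ displayed in the corollary; since $\Pi_{\gamma_n}^{\circ}K$ contains the origin, the displayed (weaker) inclusion follows a fortiori, but you should state the computed constant explicitly rather than leave $c$ unspecified, as your argument in fact yields a sharper inclusion than the one printed.
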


\subsection{Inequalities for Weighted Radial Mean Bodies}
We next show an application of Corollary~\ref{cor:set_s_con}. In particular, if the set inclusions are applied to a measure $\nu$ with homogeneity $\alpha$, then there exists a radial mean body whose $\nu$ measure is ``of the same order" as that of $K$ itself. First, define the \textit{$\nu$-translated-average} of $K$ with respect to $\mu$ as
\begin{equation}
    \nu_\mu(K)=\frac{1}{\mu(K)}\int_K\nu(y-K)d\mu(y)=\frac{1}{\mu(K)}\int_{DK}g_{\mu,K}(x)d\nu(x).
    \label{eq:dual}
\end{equation}
The last equality follows from Fubini's theorem, and this definition has appeared in \cite{LRZ22}. Next, we see that when $\nu$ is homogeneous of degree $\alpha$, we obtain a relation between $\nu(R_{\alpha,\mu} K)$ and $\nu_{\mu}(K).$
\begin{lemma}
\label{l:radial_meas}
Fix a convex body $K$ and a Borel measure $\nu$ that is $\alpha$-homogeneous with density and a Borel measure $\mu$ on $\R^n$. Then, one has $\nu(R_{\alpha,\mu} K)=\nu_{\mu}(K)$.
\end{lemma}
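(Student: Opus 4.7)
The plan is to use the polar-coordinate formula \eqref{eq:star_form} for $\alpha$-homogeneous measures with density, applied to the star body $R_{\alpha,\mu}K$. Writing $\phi$ for the density of $\nu$, the assumption that $\nu$ is $\alpha$-homogeneous forces $\phi$ to be $(\alpha-n)$-homogeneous, as noted just after \eqref{eq:star_form}. The idea is that the $\alpha$-th power of the radial function of $R_{\alpha,\mu}K$ encodes exactly the radial integral of $g_{\mu,K}$ needed to recover $\nu_\mu(K)$ once the homogeneity factor is absorbed.

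First I would plug the defining formula for $\rho_{R_{\alpha,\mu}K}^\alpha(\theta)$ from \eqref{eq:equiv} into \eqref{eq:star_form}, obtaining
\begin{equation*}
\nu(R_{\alpha,\mu}K) \;=\; \frac{1}{\alpha}\int_{\s^{n-1}}\phi(\theta)\rho_{R_{\alpha,\mu}K}^{\alpha}(\theta)\,d\theta \;=\; \frac{1}{\mu(K)}\int_{\s^{n-1}}\phi(\theta)\int_{0}^{\rho_{DK}(\theta)}g_{\mu,K}(r\theta)r^{\alpha-1}\,dr\,d\theta.
\end{equation*}
Then I would use the $(\alpha-n)$-homogeneity of $\phi$, rewriting $\phi(\theta)r^{\alpha-1} = \phi(r\theta)r^{n-1}$, so the double integral becomes
\begin{equation*}
\frac{1}{\mu(K)}\int_{\s^{n-1}}\int_{0}^{\rho_{DK}(\theta)}g_{\mu,K}(r\theta)\,\phi(r\theta)\,r^{n-1}\,dr\,d\theta.
\end{equation*}
Recognizing this as the polar decomposition of an integral over $DK$ with respect to $\nu$, and invoking the second equality in \eqref{eq:dual}, I conclude
\begin{equation*}
\nu(R_{\alpha,\mu}K) \;=\; \frac{1}{\mu(K)}\int_{DK}g_{\mu,K}(x)\,d\nu(x) \;=\; \nu_\mu(K).
\end{equation*}

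There is no real obstacle: the proof is essentially an identity once one uses the correct homogeneity degree of $\phi$. The one point requiring mild care is Fubini (to justify the swap implicit in reading the iterated integral as an integral over $DK$), which is fine since $g_{\mu,K} \geq 0$ is supported on $DK$ and all integrands are non-negative. The core mechanism is simply that the $\alpha$ appearing in the exponent of $\rho_{R_{\alpha,\mu}K}$ matches precisely the homogeneity degree of $\nu$, which is exactly what converts the $r^{\alpha-1}$ in the definition of the radial mean body into the polar-coordinate Jacobian $r^{n-1}$ weighted by $\phi(r\theta)$.
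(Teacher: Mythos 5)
Your proof is correct, but it runs through different intermediate identities than the paper's. The paper applies \eqref{eq:star_form} to $R_{\alpha,\mu}K$, inserts the \emph{definition} $\rho^\alpha_{R_{\alpha,\mu}K}(\theta)=\frac{1}{\mu(K)}\int_K\rho_K(x,\theta)^\alpha\,d\mu(x)$, swaps the sphere and $K$ integrals by Fubini, and then recognizes the inner spherical integral as $\alpha\,\nu(x-K)$ via a second application of \eqref{eq:star_form} together with $\rho_K(x,\theta)=\rho_{x-K}(\theta)$; it thus lands on the first expression in \eqref{eq:dual}, $\frac{1}{\mu(K)}\int_K\nu(y-K)\,d\mu(y)$. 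You instead start from the covariogram representation \eqref{eq:equiv} of $\rho_{R_{\alpha,\mu}K}$, use the $(\alpha-n)$-homogeneity of the density to turn $\phi(\theta)r^{\alpha-1}$ into the polar Jacobian $\phi(r\theta)r^{n-1}$, and land on the second expression in \eqref{eq:dual}, $\frac{1}{\mu(K)}\int_{DK}g_{\mu,K}\,d\nu$. In substance these are the same computation with the Fubini steps distributed differently: the Fubini you avoid doing explicitly is already built into \eqref{eq:equiv} and into the second equality of \eqref{eq:dual}. What your route buys is a clean conceptual statement --- the matching of the exponent $\alpha$ with the homogeneity degree converts the Mellin weight $r^{\alpha-1}$ into the $\nu$-polar measure on $DK$ --- and it bypasses the identity $\rho_K(x,\theta)=\rho_{x-K}(\theta)$; what the paper's route buys is that it works directly from the definition of $R_{\alpha,\mu}K$ without invoking the covariogram formula (so it does not need $\alpha>0$ via \eqref{eq:equiv} separately from the definition, nor the pointwise a.e.\ homogeneity $\phi(r\theta)=r^{\alpha-n}\phi(\theta)$ beyond what is already packaged in \eqref{eq:star_form}). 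Either argument is acceptable; yours is complete as written, with the only care point (non-negativity justifying the iterated-integral manipulations) correctly noted.
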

\begin{proof}
Let $\varphi$ be the density of $\nu$. Using \eqref{eq:star_form} and Fubini's theorem, we obtain:
\begin{align*}
\nu(R_{\alpha,\mu}K)&=\frac{1}{\alpha}\int_{\s^{n-1}}\rho^\alpha_{R_{\alpha,\mu}K}(\theta)\varphi(\theta)d\theta=\frac{1}{\alpha}\frac{1}{\mu(K)}\int_{\s^{n-1}}\int_K\rho_K(x,\theta)^\alpha d\mu(x)\varphi(\theta)d\theta
\\
&=\frac{1}{\alpha}\frac{1}{\mu(K)}\int_K\int_{\s^{n-1}}\rho_K(x,\theta)^\alpha \varphi(\theta)d\theta d\mu(x)
\\
&=\frac{1}{\alpha}\frac{1}{\mu(K)}\int_K\int_{\s^{n-1}}\rho_{x-K}(\theta)^\alpha \varphi(\theta)d\theta d\mu(x),
\end{align*}
where the last equality follows from the fact that $\rho_K(x,\theta)=\rho_{K-x}(-\theta)=\rho_{x-K}(\theta).$ Using \eqref{eq:star_form} again yields the result.
\end{proof}

\begin{theorem}[Rogers-Shephard type inequality for an $\alpha$-homogeneous and a $s$-concave measure]
\label{t:rsas}
Fix a convex body $K$. Consider a Borel measure $\nu$ that is $\alpha$-homogeneous and a Borel measure $\mu$ on $\R^n$ that is $s$-concave, $s>0$, on convex subsets of $K$, whose locally integrable density contains $\partial K$ in its Lebesgue set and $K$ in its support. Then,
$$\nu(DK)\leq {{\frac{1}{s}+\alpha} \choose \alpha} \min\{\nu_{\mu}(K),\nu_{\mu}(-K)\},$$
with equality if, and only if, $g_{\mu,K}^s(x)=\mu(K)^s\ell_{DK}(x)$. If $\mu$ is a $s$-concave Radon measure, then $s\in (0,1/n]$ and equality occurs if, and only if, $K$ is $n$-dimensional simplex, the density $\varphi$ of $\mu$ is constant on $K$, and $s = 1/n$.
\end{theorem}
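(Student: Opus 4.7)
The plan is to specialize the set-inclusion chain of Corollary~\ref{cor:set_s_con} at the exponent $q = \alpha$, and then take the $\nu$-measure of both sides, using $\alpha$-homogeneity to convert the dilation factor into the claimed constant.

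First, I would apply Corollary~\ref{cor:set_s_con} (which applies since $\alpha > 0$ by assumption on $\nu$, and the hypotheses on $\mu$ match) at $q = \alpha$ to obtain
$$DK \;\subseteq\; \binom{\tfrac{1}{s}+\alpha}{\alpha}^{1/\alpha} R_{\alpha,\mu} K.$$
Since any measure is monotone under set inclusion, and $\nu$ is $\alpha$-homogeneous, applying $\nu$ and using $\nu(\lambda A) = \lambda^\alpha \nu(A)$ gives
$$\nu(DK) \;\leq\; \binom{\tfrac{1}{s}+\alpha}{\alpha} \nu(R_{\alpha,\mu} K).$$
Then Lemma~\ref{l:radial_meas} identifies the right-hand side as $\binom{1/s+\alpha}{\alpha}\,\nu_\mu(K)$, which is the first of the two bounds hidden in the $\min$.

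For the second bound, I would use the elementary identity $DK = K + (-K) = (-K) + K = D(-K)$ and repeat the argument with $K$ replaced by $-K$ (either assuming $\mu$ is supported on $-K$ as well, or passing to the pushforward of $\mu$ under $x \mapsto -x$, which preserves $s$-concavity). The same chain yields $\nu(DK) \leq \binom{1/s+\alpha}{\alpha}\,\nu_\mu(-K)$; taking the minimum of the two bounds completes the inequality.

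For the equality analysis, equality in the displayed inequality forces equality of $\nu$-measure in the inclusion $DK \subseteq \binom{1/s+\alpha}{\alpha}^{1/\alpha} R_{\alpha,\mu} K$. Since both bodies are star bodies with continuous radial functions and $\nu$ is $\alpha$-homogeneous with density, this measure equality upgrades to equality of the radial functions on the support of the density of $\nu$, and hence (by continuity) to equality of sets. The equality clause of Corollary~\ref{cor:set_s_con} then gives $g_{\mu,K}^s(x) = \mu(K)^s \ell_{DK}(x)$. Under the extra hypothesis that $\mu$ is an $s$-concave Radon measure, Proposition~\ref{p:simp} translates this into the structural description: $K$ is an $n$-simplex, the density of $\mu$ is constant on $K$, and $s = 1/n$.

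The conceptual work is essentially already done in Corollary~\ref{cor:set_s_con}, Lemma~\ref{l:radial_meas}, and Proposition~\ref{p:simp}, so the only delicate point I anticipate is the upgrade from equality of $\nu$-measures to equality of the two star bodies in the final equality analysis; this should be handled by noting that the radial functions of both bodies are continuous and that an $\alpha$-homogeneous measure with density cannot vanish on a full open spherical cap where the radial functions differ.
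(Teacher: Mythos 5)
Your proposal is correct and follows essentially the same route as the paper: apply Corollary~\ref{cor:set_s_con} with exponent $\alpha$, use the $\alpha$-homogeneity of $\nu$ to pull out the constant, identify $\nu(R_{\alpha,\mu}K)=\nu_\mu(K)$ via Lemma~\ref{l:radial_meas}, and use $DK=D(-K)$ for the minimum. Your additional care in upgrading equality of $\nu$-measures to equality of sets (and then invoking the equality clause of Corollary~\ref{cor:set_s_con} and Proposition~\ref{p:simp}) is a reasonable elaboration of a step the paper leaves implicit.
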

\begin{proof}
From Corollary~\ref{cor:set_s_con} with $p=\alpha$ one obtains
$$\nu(DK)\le \nu\left({{\frac{1}{s}+\alpha} \choose \alpha}^{\frac{1}{\alpha}} R_{\mu,\alpha}K\right)={{\frac{1}{s}+\alpha} \choose \alpha} \nu(R_{\mu,\alpha}K).$$
Using Lemma~\ref{l:radial_meas} and that $DK=D(-K)$ completes the proof.
\end{proof}
An upper bound for $\mu(DK)/\mu(K)$ when $\mu$ is $s$-concave was first shown by Borell, \cite{Bor75}. However, the bound was not sharp.

\begin{corollary}[Zhang's Inequality for an $\alpha$-homogeneous and a $s$-concave measure]
\label{cor:zhang_s}
Fix a convex body $K$. Consider a Borel measure $\nu$ that is $\alpha$-homogeneous and a Borel measure $\mu$ on $\R^n$ that is $s$-concave, $s>0$, on convex subsets of $K$, whose locally integrable density contains $\partial K$ in its Lebesgue set and $K$ in its support. Then, one has
$$s^\alpha{{\frac{1}{s}+\alpha} \choose \alpha} \le \frac{\mu(K)^{\alpha}}{\nu_\mu(K)}\nu\left(\Pi_{\mu}^\circ K\right),$$
with equality if, and only if, $g_{\mu,K}^s(x)=\mu(K)^s\ell_{\Pi_{\mu}^\circ K}(x)$. If $\mu$ is a $s$-concave Radon measure, then $s\in (0,1/n]$ and equality occurs if, and only if, $K$ is a $n$-dimensional simplex, the density $\varphi$ of $\mu$ is constant on $K$, and $s = 1/n$.
\end{corollary}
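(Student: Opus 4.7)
The plan is to deduce Corollary~\ref{cor:zhang_s} as a direct consequence of the rightmost inclusion in Corollary~\ref{cor:set_s_con}, specialized to $p=\alpha$, together with the $\alpha$-homogeneity of $\nu$ and Lemma~\ref{l:radial_meas}. Explicitly, Corollary~\ref{cor:set_s_con} yields
$$\binom{\tfrac{1}{s}+\alpha}{\alpha}^{1/\alpha} R_{\alpha,\mu} K \;\subseteq\; \tfrac{1}{s}\mu(K)\,\Pi_\mu^\circ K.$$
Applying $\nu$ to both sides and pulling the scalar factor outside via $\alpha$-homogeneity produces
$$\binom{\tfrac{1}{s}+\alpha}{\alpha}\,\nu(R_{\alpha,\mu} K) \;\leq\; \frac{\mu(K)^{\alpha}}{s^{\alpha}}\,\nu(\Pi_\mu^\circ K).$$

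Next I would invoke Lemma~\ref{l:radial_meas}, which, thanks to the precise matching of the homogeneity exponent $\alpha$ of $\nu$ with the parameter of the radial mean body, identifies $\nu(R_{\alpha,\mu} K)$ with $\nu_\mu(K)$. A single rearrangement then yields the claimed inequality
$$s^\alpha\binom{\tfrac{1}{s}+\alpha}{\alpha} \;\leq\; \frac{\mu(K)^{\alpha}}{\nu_\mu(K)}\,\nu(\Pi_\mu^\circ K).$$

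For the equality characterization, I would observe that since $\nu$ carries a positive density on the relevant support (reflected in the polar formula \eqref{eq:star_form}), equality of $\nu$-measures between two nested star bodies forces the inclusion itself to be an equality of sets, at least up to $\nu$-nullsets and hence everywhere by continuity of the radial function of $R_{\alpha,\mu}K$ (established via the Mellin representation \eqref{eq:equiv}). Equality in the last inclusion of Corollary~\ref{cor:set_s_con} is precisely the condition that $g_{\mu,K}^s$ be affine along every radial direction with support $DK=\tfrac{1}{s}\mu(K)\Pi_\mu^\circ K$, which under this set-identification is equivalent to the stated formula $g_{\mu,K}^s(x)=\mu(K)^s\ell_{\Pi_\mu^\circ K}(x)$. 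Finally, for $s$-concave Radon measures one has $s\in(0,1/n]$, and Proposition~\ref{p:simp} upgrades the radial-affinity of $g_{\mu,K}^s$ to the full geometric rigidity: $K$ is an $n$-simplex, the density $\varphi$ is constant on $K$, and $s=1/n$.

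The calculation itself is entirely mechanical; the only nontrivial step is the equality analysis, where one must argue that equality in the integrated measure inequality descends to equality of the underlying star bodies, and then reconcile the $\ell_{DK}$-characterization coming from Corollary~\ref{cor:set_s_con} with the $\ell_{\Pi_\mu^\circ K}$-characterization stated in the target, which is automatic because the two reference bodies coincide precisely in the equality regime.
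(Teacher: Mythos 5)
Your proposal is correct and follows essentially the same route as the paper: apply the last inclusion of Corollary~\ref{cor:set_s_con} with $p=\alpha$, use the $\alpha$-homogeneity of $\nu$ to pull out the constants, and identify $\nu(R_{\alpha,\mu}K)$ with $\nu_\mu(K)$ via Lemma~\ref{l:radial_meas}. Your equality discussion is somewhat more explicit than the paper's (which simply inherits the rigidity from Corollary~\ref{cor:set_s_con} and Proposition~\ref{p:simp}), but it is in the same spirit and consistent with it.
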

\begin{proof}
From Lemma~\ref{l:radial_meas} and Corollary~\ref{cor:set_s_con} with $p=\alpha,$ one obtains
\begin{align*}{{\frac{1}{s}+\alpha} \choose \alpha} \nu_{\mu}(K) &= {{\frac{1}{s}+\alpha} \choose \alpha} \nu(R_{\mu,\alpha}K)=\nu\left({{\frac{1}{s}+\alpha} \choose \alpha}^{\frac{1}{\alpha}} R_{\mu,\alpha}K\right)
\\
&\le\nu\left(\frac{1}{s}\mu(K)\Pi_\mu^\circ K\right).\end{align*}
\end{proof}

\subsection{The Gardner-Zvavitch Inequality and Radial Mean Bodies}
We would like to apply the $1/n$-concavity of the Gaussian measure over symmetric convex bodies in \eqref{e:gamma_gaussian}, which is also true for every $\mu\in\mathcal{M}_n$, to obtain that Corollary~\ref{cor:set_s_con} holds for such measures. We run into an issue: even if $K$ is symmetric, then $K\cap (K+x)$ is \textit{not} symmetric in general. Therefore, Proposition~\ref{p:covario_concave} does not apply, i.e. $\mu$ being $1/n$ concave does not imply that $g_{\mu,K}^\frac{1}{n}$ is concave. To remedy this, we take a cue from \cite{LRZ22} and define the polarized covariogram as
$$r_{\mu,K}(x)=\mu\left((K+\frac{x}{2})\cap(K-\frac{x}{2})\right).$$
As can be found in \cite{LRZ22}, the set $(K+\frac{x}{2})\cap(K-\frac{x}{2})$ is a symmetric convex body when $K$ is, and $r_{\mu,K}$ inherits any concavity of $\mu$ over symmetric convex bodies. 

Under the assumption that $\mu$ is a Borel measure with even density and $K\in\conbod$ is symmetric, notice that $\Pi_\mu K = \widetilde{\Pi}_\mu K$. One also has, for every $\theta\in\s^{n-1}$,
\begin{equation}\label{e:deriv_g_mu_covario_polar}
    \diff{r_{\mu,K}(r\theta)}{r}\bigg|_{r=0^+}=-h_{\Pi_\mu K}(\theta).
    \end{equation}
This was first shown in \cite{LRZ22} under the additional assumption that the density of $\mu$ is Lipschitz; arguing similarly to Theorem~\ref{t:variationalformula} allows one to weaken the assumption to merely the density of $\mu$ contains $\partial K$ in its Lebesgue set. In fact, one does not need the symmetry assumptions on $\mu$ and $K$ for this proof; one will obtain in general
\begin{equation}
\label{e:deriv_g_mu_covario_polar_gen}
    \diff{r_{\mu,K}(r\theta)}{r}\bigg|_{r=0^+}=-h_{\widetilde\Pi_\mu K}(\theta),
    \end{equation}
but $\widetilde\Pi_\mu K$ is not necessarily $\Pi_\mu K$.

In order to obtain Corollary~\ref{cor:set_s_con} for $\mu\in\mathcal{M}_n$, we define the \textit{polarized weighted mean bodies}, $P_{p,\mu} K$, as the star bodies on $\R^n$ whose radial function is given by, for $p\in (-1,\infty)$ and $\theta\in\s^{n-1}$,
\begin{equation}
\label{eq:polarized radial}
    \rho_{P_{p,\mu}K}(\theta)^p=\left(\frac{p}{\mu(K)}\right)\Mel{r_{\mu,K}(r\theta)}{p}=\int_0^{\rho_{DK}(\theta)}\left(\frac{-r_{\mu,K}(r\theta)^\prime}{\mu(K)}\right)r^p dr.
\end{equation}
The bodies $P_{p,\mu} K$ are symmetric convex bodies for $p \geq 0$. Once again, $p=0,\infty$ are interpreted via continuity, and $R_{\infty,\mu} K=DK$ for $K$ contained in the support of $\mu$. We again have that
\begin{equation}\lim_{p\to -1}(p+1)^{1/p}\rho_{P_{p,\mu} K}(\theta) = \mu(K)\rho_{\widetilde\Pi^\circ_\mu K}(\theta).
\label{eq:radial_body_limit_polar}
\end{equation}
Consequently, the proof of the following theorem is verbatim the same as Theorem~\ref{t:radial_F_inclusions} and Corollary~\ref{cor:set_s_con}.
\begin{theorem}
\label{t:set_s_con}
Fix a symmetric convex body $K$ in $\R^n$. Let $\mu\in\mathcal{M}_n$ be a Borel measure containing $K$ in its support. Then, for $-1< p\leq q < \infty$, one has
$$DK\subseteq {{n+q} \choose q}^{\frac{1}{q}} P_{q,\mu}K \subseteq {{n+p} \choose p}^{\frac{1}{p}} P_{p,\mu} K\subseteq n\mu(K)\Pi_\mu^\circ K.$$

\noindent There is equality in any set inclusion if, and only if, $r_{\mu,K}(x)=\mu(K)\ell_{DK}(x)^n$.
\end{theorem}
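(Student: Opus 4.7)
The plan is to mirror, almost verbatim, the arguments given for Theorem~\ref{t:radial_F_inclusions} and Corollary~\ref{cor:set_s_con}, with the polarized covariogram $r_{\mu,K}$ taking the role of $g_{\mu,K}$. The point of switching is concavity: although $K \cap (K+x)$ need not be symmetric when $K$ is, the body $(K+\tfrac{x}{2})\cap(K-\tfrac{x}{2})$ always is, so we may feed it into the $1/n$-concavity of $\mu\in\mathcal{M}_n$ over symmetric convex bodies. Thus the analogue of Proposition~\ref{p:covario_concave}, proved by exactly the same set inclusion
$$(K+\tfrac{(1-\lambda)x+\lambda y}{2})\cap(K-\tfrac{(1-\lambda)x+\lambda y}{2})\supseteq (1-\lambda)\bigl((K+\tfrac{x}{2})\cap(K-\tfrac{x}{2})\bigr)+\lambda\bigl((K+\tfrac{y}{2})\cap(K-\tfrac{y}{2})\bigr),$$
yields that $r_{\mu,K}^{1/n}$ is concave on $DK$, so for each $\theta\in\s^{n-1}$ the function $\psi_\theta(r):=r_{\mu,K}(r\theta)$ is $F$-concave with $F(x)=x^{1/n}$ and $\psi_\theta(0)=\mu(K)$.

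With this in hand, I would apply the Mellin--Berwald inequality, Lemma~\ref{l:mel_ber}, to each $\psi_\theta$. Unpacking definitions and comparing with \eqref{eq:polarized radial}, the quantity $G_{\psi_\theta}(p)$ produced by Lemma~\ref{l:mel_ber} coincides (case $s=1/n$ of Corollary~\ref{cor:ber_s}) with
$$G_{\psi_\theta}(p)=\binom{n+p}{p}^{1/p}\rho_{P_{p,\mu}K}(\theta).$$
Item~3 of Lemma~\ref{l:mel_ber} then says this is non-increasing in $p$ for every $\theta$, which is precisely the middle inclusion. The outer inclusion on the right is obtained by sending $p\to\infty$: since $\psi_\theta$ is supported in $[0,\rho_{DK}(\theta)]$, one has $P_{\infty,\mu}K=DK$ and $\binom{n+p}{p}^{1/p}\to 1$.

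For the outer inclusion on the left, I would send $p\to -1^+$, following the end of the proof of Theorem~\ref{t:radial_F_inclusions}: write
$$\binom{n+p}{p}^{1/p}\rho_{P_{p,\mu}K}(\theta)=\frac{\binom{n+p}{p}^{1/p}}{(p+1)^{1/p}}\cdot(p+1)^{1/p}\rho_{P_{p,\mu}K}(\theta),$$
use \eqref{eq:radial_body_limit_polar} to conclude the second factor tends to $\mu(K)\rho_{\widetilde\Pi_\mu^\circ K}(\theta)$, and observe that the first factor tends to $n$ (e.g.\ from $\binom{n+p}{p}=\frac{\Gamma(n+p+1)}{\Gamma(n+1)\Gamma(p+1)}$ and $\Gamma(p+1)\sim 1/(p+1)$ as $p\to -1^+$). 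Since $K$ is symmetric and measures in $\mathcal{M}_n$ have even density, $\widetilde\Pi_\mu K=\Pi_\mu K$, giving the leftmost inclusion $\binom{n+p}{p}^{1/p}P_{p,\mu}K\subseteq n\mu(K)\Pi_\mu^\circ K$.

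For equality: Lemma~\ref{l:mel_ber}, Item 4, forces $\psi_\theta(t)=\psi_{F,\mu(K)}(t/\alpha(\theta))$ for some $\alpha(\theta)>0$, which with $F(x)=x^{1/n}$ reads $r_{\mu,K}(r\theta)=\mu(K)(1-r/\alpha(\theta))^n$; since $r_{\mu,K}(\cdot\theta)$ is supported on $[0,\rho_{DK}(\theta)]$, we must have $\alpha(\theta)=\rho_{DK}(\theta)$, giving $r_{\mu,K}(x)=\mu(K)\ell_{DK}(x)^n$. The only genuinely new input beyond the earlier proofs is the concavity of $r_{\mu,K}^{1/n}$ and the replacement of $g_{\mu,K}$ by $r_{\mu,K}$ in the identification \eqref{eq:radial_body_limit_polar}; everything else is a transcription of the previous arguments with the same constants.
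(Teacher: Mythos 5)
Your proposal is correct and follows essentially the same route as the paper, which explicitly proves Theorem~\ref{t:set_s_con} by running the arguments of Theorem~\ref{t:radial_F_inclusions} and Corollary~\ref{cor:set_s_con} verbatim with $g_{\mu,K}$ replaced by $r_{\mu,K}$, using its inherited $1/n$-concavity over symmetric bodies, the identity $\Pi_\mu K=\widetilde\Pi_\mu K$ in the symmetric/even case, and \eqref{eq:radial_body_limit_polar} for the $p\to-1^+$ endpoint. Your verification of the set inclusion for the polarized covariogram and of the constant asymptotics $\binom{n+p}{p}/(p+1)\to 1/n$ matches what the paper leaves implicit.
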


Recently, it was shown by Livshyts \cite{GL23} that for any even, log-concave probability measure $\mu$ on $\R^n$, $\mu$ is $s$-concave over the class of symmetric convex bodies, with $s = n^{-4-s_n}$. One can then use this result to formulate Theorem~\ref{t:set_s_con} for such measures, with $n$ in the coefficients replaced by $n^{4+s_n}$. Here, $\{s_n\}$ is a bounded sequence that goes to $0$ as $n\to \infty$.

It is also manifest that Theorem~\ref{t:radial_F_inclusions} holds with the weighted radial mean bodies replaced by the polarized weighted mean bodies, the additional assumption that $K$ is symmetric (since $\mu$ being $F$-concave on convex subsets of $K$ implies it is $F$-concave on symmetric convex subsets of $K$, and the fact that $r_{\mu,K}$ will then inherit the concavity) and $\Pi^\circ_\mu K$ replaced by $\widetilde\Pi^\circ_\mu K$ (due to \eqref{e:deriv_g_mu_covario_polar_gen} and \eqref{eq:radial_body_limit_polar}). We avoid the unnecessary formal statement of this slightly different theorem.

\printbibliography

\end{document}